\theoremstyle{plain}
\newtheorem{thm}{Theorem}[section]
\newtheorem{lem}[thm]{Lemma}
\newtheorem{cor}[thm]{Corollary}
\newtheorem{prop}[thm]{Proposition}
\theoremstyle{remark}
\newtheorem{rem}[thm]{Remark}
\newtheorem{note}[thm]{Note}
\theoremstyle{definition}
\newtheorem{defn}[thm]{Definition}
\newtheorem{nota}[thm]{Notation}
\newtheorem{exmp}[thm]{Example}
\numberwithin{equation}{section}
\begin{document}

\title{Uniform extensions of layered semi-fields}
\author{Tal Perri}%

\begin{abstract}
In this paper we introduce a canonical method of constructing simple uniform semifield extensions of uniform layered semifields introduced in \cite{Layered}.
Our construction includes a decomposition of a uniform extension of a uniformly layered (uniform) semifield to the bipotent semifield extension of its $\nu$-values semifield and a cancellative semifields extension of its layers (sorting) semifield. We give a characterization of these two types of semifields extensions in the first two sections of the paper. The third section glues the pieces together to form a theory for a uniform extension of a uniformly layered semifield.
\end{abstract}

\maketitle

\section{Overview}

Consider the polynomial semiring $\mathbb{H}[x]$, where $\mathbb{H}=(\mathcal{G}(\mathbb{H}),\mathcal{L}_{\mathbb{H}})$
is a uniform layered semifield. Although $\mathbb{H}[x]$ is not a bipotent nor a (additively) cancellative semiring, for a given $a \in \mathbb{D}$ where $\mathbb{D}$ is a domain extending $\mathbb{H}$, the substitution $x=a$ gives rise to a much simpler structure on $\mathbb{H}[a]= \{ f(a) \ : \ f \in \mathbb{H}[x] \}$ which we model as an algebraic structure composed of a (additively) cancellative semiring and a bipotent semiring. Specifically for each $\nu$-value there is a layer fibre which is a (additively) cancellative semiring. In the last section we specify the set of elements $a \in \mathbb{D}$ for which $\mathbb{H}[a]$ is a uniform layered semifield. Moreover, for any $a \in D$ we build the minimal uniform layered semifield containing it as a composition of a pair of specific extensions called `pure extensions'.\\
Finally, as we show that a uniform layered extension can be decomposed as a pair of a (additively) cancellative and a bipotent affine extension, the complete description for both of these later cases made in the first two sections of this paper, completes the picture of simple uniform layered extensions.\\
Though used for the layered extension construction, the sections concerning bipotent and cancellative semifield extensions contain general results.
One of the models to which our construction applies to is that of uniform $\mathbb{Q}$-layered semifields.

\ \\
\section{Semifields and extensions of semifields}\label{section:SemifieldsPreliminaries}

\ \\

\begin{defn}\label{defn_affine_ext}
Let $\mathbb{H}$ be a semifield, and let $\mathbb{D}$ be a semiring extending $\mathbb{H}$.
We say that $\mathbb{D}$ is generated by a subset $A \subset \mathbb{D}$ over $\mathbb{H}$ if every element $a \in \mathbb{D}$ is of the form $\sum_{i=1}^{n}\alpha_i \prod_{j=1}^{m}a_{i,j}^{k_{i,j}}$ with $a_{i,j} \in A $ and $k_{i,j} \in \mathbb{N}$. $\mathbb{D}$ is said to be \emph{affine} over $\mathbb{H}$, or an \emph{affine extension} of $\mathbb{H}$, if $A$ is finite.

If $\mathbb{D}$ is affine over $\mathbb{H}$, we denote $\mathbb{D} = \mathbb{H}[a_1,...,a_n]$ where $\{a_1,...,a_n\}$ is a set of generators of $\mathbb{D}$ over $\mathbb{H}$. Namely, $\mathbb{\mathbb{D}} = \{ f(a_1,...,a_n) \ : \ f \in \mathbb{H}[x_1,...,x_n] \}$ where $\mathbb{H}[x_1,...,x_n]$ is the polynomial semiring with coefficients in $\mathbb{H}$.
\end{defn}

\begin{defn}
We say that a semiring $\mathbb{H}$ is a \emph{domain} when $\mathbb{H}$ is multiplicatively \linebreak cancellative.
\end{defn}

\begin{note}
In what follows, we refer to a domain semiring just as a `domain'.
\end{note}

\begin{defn}
Let $\mathbb{D}$ be an domain extending a semifield $\mathbb{H}$. The \emph{semifield of fractions}
of $D$ is defined to be
$$Frac(\mathbb{D}) = \left\{\frac{d_1}{d_2} \ : \ d_1, d_2 \in \mathbb{D}, \ d_2 \neq 0 \right\}.$$
If $\mathbb{D}$ is a semifield then $Frac(\mathbb{D})=\mathbb{D}$.\\
\begin{flushleft}If $\mathbb{D} = \mathbb{H}[a_1,...,a_n]$ is affine over $\mathbb{H}$, then \end{flushleft}
 $$Frac(\mathbb{D}) = \left\{ \frac{f(a_1,...,a_n)}{g(a_1,...,a_n)} \ : \ f,g \in \mathbb{H}[x_1,...,x_n], \ g(a_1,...,a_n) \neq 0  \right\} $$ where $\mathbb{H}[x_1,...,x_n]$ is the polynomial semiring with coefficients in $\mathbb{H}$. In this special case $Frac(\mathbb{D})$ is denoted as $\mathbb{H}(a_1,...,a_n)$.
\end{defn}

\begin{defn}\label{defn_simple_semifield_extension}
Let $\mathbb{S}$ be a semifield extending a given semifield $\mathbb{H}$.
$\mathbb{S}$ is said to be a \emph{simple} extension of $\mathbb{H}$ if there exists an element $d \in \mathbb{S}$ that generates $\mathbb{S}$ as a semifield over $\mathbb{H}$, i.e. $$\mathbb{S} = \left\{ \frac{f(d)}{g(d)} \ : \ f,g \in \mathbb{H}[x], \ g(d) \neq 0 \right\}.$$
\end{defn}

\ \\
\section{Bipotent extensions} \label{section:BipotentExtensions}  \ \\
%
%
%

Recall that  a \emph{bipotent semiring} $\mathbb{H}$ is a semiring satisfying
$ \alpha + \beta  \in  \{\alpha, \beta \}  $ for any $\alpha, \beta \in \mathbb{H}$.


\begin{rem}\label{rem_affine_bipotent}
Let $\mathbb{H}$ be a bipotent semiring, generated as a semiring by a proper subset $A \subset \mathbb{H}$. For any $a \in \mathbb{H}$, then since multiplication is distributive over addition, we can write
$$a = \sum_{i=1}^{n}\prod_{j=1}^{m} a_{i,j}^{k_{i,j}}$$ with some $a_{i,j} \in A $ and $k_{i,j} \in \mathbb{N}$. Since addition is bipotent the last expression reduces to an expression $\prod_{j=1}^{m} a_{i_0,j}^{k_{i_0,j}}$ with $i_0 \in \{1,...,n\}$. Thus any element of $\mathbb{H}$ is a finite product of elements in $A$. Consequently, $\mathbb{H}$ can be thought of as a multiplicative ordered monoid generated by $A$.
\end{rem}

\medbreak

In view of Definition \ref{defn_affine_ext} we have the following

\begin{rem}
If $\mathbb{D}$ is bipotent (thus so is $\mathbb{H}$), then by Remark \ref{rem_affine_bipotent}, $\mathbb{D}$ is generated by $A \subset \mathbb{D}$ over $\mathbb{H}$ if every element $a \in \mathbb{D}$ is of the form $\alpha \prod_{j=1}^{m}a_{j}^{k_{j}}$ with $a_{j} \in A $ and $k_{j} \in \mathbb{N}$. $\mathbb{D}$ is an affine extension if $A$ is finite, in which case, if $A = \{a_1,...,a_n\}$ is a set of generators of $\mathbb{D}$ over $\mathbb{H}$ then
$$\mathbb{D} = \mathbb{H}[a_1,...,a_n] = \{ f(a_1,...,a_n) \ : \ f \in \mathbb{H}[x_1,...,x_n] \ \text{is a monomial} \}.$$
Consequently, the semifield of fractions is of the form  \\

\begin{flushleft}$ Frac(D) = \mathbb{H}(a_1,...,a_n)$ \end{flushleft}
\begin{align}
= \left\{ \frac{m_1(a_1,...,a_n)}{m_2(a_1,...,a_n)} \ : \ m_1,m_2 \in \mathbb{H}[x_1,...,x_n] \ \text{ are monomials}, \  m_2(a_1,...,a_n) \neq 0 \right\} \nonumber \\
= \left\{\alpha \prod_{i=1}^{n}a_{i}^{k_i} \ : \ \alpha \in \mathbb{H}, \ k_i \in \mathbb{Z} \right\} \nonumber
\end{align}
where $\mathbb{H}[x_1,...,x_n]$ is the polynomial semiring with coefficients in $\mathbb{H}$.
\end{rem}

\bigbreak
\begin{defn}
If $\mathbb{D}$ is a bipotent semifield extending $\mathbb{H}$,
then we say that $\mathbb{D}$ is a \emph{bipotent extension} of $\mathbb{H}$.
\end{defn}

\begin{rem}\label{rem_quotient_monoid_stracure}
Let $\mathbb{H}$ be a bipotent semifield. Let $\mathbb{D}$ be a bipotent domain extending $\mathbb{H}$.
Since $\mathbb{H}$ is a semifield, $\mathbb{H}^{\ast}= \mathbb{H} \setminus \{ 0 \}$ is a multiplicative normal subgroup of $\mathbb{D}$ (which is commutative with respect to multiplication). Thus, the quotient monoid $\tilde{\mathbb{D}} = \mathbb{D}/{\mathbb{H}^{\ast}}$ is well-defined. Note that the operation of addition of $\mathbb{D}$ is not induced on~$\tilde{\mathbb{D}}$.\\
Equivalently, we can define the following relation on $\mathbb{D}$:\\

For every $a,b \in \mathbb{D}$
$$a \sim_{\mathbb{H}} b \Leftrightarrow  \exists  \alpha \neq 0 \in \mathbb{H} : a = \alpha b.$$

If $A \subset \mathbb{D}$ is a generating set of $\mathbb{D}$ over $\mathbb{H}$ then

$$\tilde{\mathbb{D}} = \mathbb{D}/\mathbb{H}^{\ast} = \left\{ \prod_{i=1}^{n}[a_i]^{k_i} \ : \ a_i \in A , \ k_i \in \mathbb{N} \right\} \cup \{0\} = \langle A \rangle \cup \{0\}$$
where $[  a  ]$ denotes the $\mathbb{H}^{\ast}$-coset of $a \in \mathbb{D}$ and $\langle A \rangle$ denotes the monoid generated by $A/ \mathbb{H}^{\ast} = \{[a] \ : \ a \in A \}$.\\
\end{rem}

\begin{rem}\label{rem_quotient_group_stracure}
In the special case of Remark \ref{rem_quotient_monoid_stracure} in which $\mathbb{D}$ is a semifield, we get that $\tilde{\mathbb{D}} = \mathbb{D}/{\mathbb{H}^{\ast}}$ is an abelian group.
\end{rem}

\begin{rem}\label{construction1}
Assume $\mathbb{D}= \mathbb{H}[a_1,...,a_n]$ is an affine bipotent semifield extending $\mathbb{H}$.  Since $\mathbb{D}$ is a semifield, we have that  $\mathbb{D}^{\ast} =\mathbb{D} \setminus \{ 0 \}$ is a multiplicative group, and thus so is  $\tilde{\mathbb{D}^{\ast}} = \mathbb{D}^{\ast}/{\mathbb{H}^{\ast}}$.  The set $\{ a_1,...,a_n \}$ generates $\mathbb{D}^{\ast}$ multiplicatively over $\mathbb{H}$. Thus $\left\{[a_1],...,[a_n] \right\}$ generates $\tilde{\mathbb{D}^{\ast}}$. So, we have that
$$\tilde{\mathbb{D}} = \tilde{\mathbb{D}^{\ast}} \cup \{ 0 \} = \left\langle [a_1],...,[a_n] \right\rangle \cup \{ 0 \}$$
where $\langle [a_1],...,[a_n] \rangle$ is the abelian group generated by $\left\{ [a_1],...,[a_n] \right\}.$\\

Denote $G = \langle [a_1],...,[a_n] \rangle$. By the fundamental theorem for finitely generated abelian groups, we can choose $[b_1],...,[b_t],[c_{t+1}],...[c_{m}] \in G$ such that
$$G = FT = \prod_{i=1}^{t} \langle [b_i] \rangle \prod_{i=t+1}^{m}  \langle [c_i] \rangle$$ where $F = \prod_{i=1}^{t} \langle [b_i] \rangle$ is free of rank $t$, and $T = Tor(G) = \prod_{i=t+1}^{m}  \langle [c_i] \rangle$ is the torsion subgroup of $G$ where $[c_i]^{n_i} = 1$ for appropriate natural numbers  $n_i \geq 2$ such that $n_{i+1} | n_{i}$ for $t \leq i \leq m-1$.\\
Moreover, the following hold for the elements in $\left\{[b_1],...,[b_t],[c_{t+1}],...[c_{m}]\right\}$:
$$\langle [b_j] \rangle \cap \langle \{[b_i] \ : \ i \in \{ 1,...,t \} \setminus \{ j \} \} \cup \{[c_i] \ : \ i=t+1,...,m \}  \rangle = [1]$$
for any $j = 1,...,t$,  and
$$\langle [c_k] \rangle \cap \langle \{ [b_i] \ : \ i = 1,...,t \} \cup \{ [c_i] \ : \ i \in \{ t+1,...,m \} \setminus \{ k \} \}  \rangle = [1] $$
for any $k = t+1, ..., m$.
\end{rem}

\begin{defn}
Let $\mathbb{H}$ be a semifield and let $\mathbb{D}$ be a bipotent domain extending $\mathbb{H}$.
The set $\{a_1, a_2,...,a_n \} \subset \mathbb{D}$ is said to be \emph{divisibly dependent} over $\mathbb{H}$ if there exist distinct monomials $m_1,m_2 \in \mathbb{H}[x_1,...,x_n]$ such that $m_1(a_1,...,a_n) = m_2(a_1,...,a_n)$. Otherwise $\{ a_1,...,a_n\}$ is said to be \emph{divisibly independent} over $\mathbb{H}$.
For $S \subset \mathbb{D}$, we say that $S$ is \emph{divisibly dependent} over $\mathbb{H}$ if there exist a finite subset $\{a_1,...,a_n\} \subset S$ which is dependent over $\mathbb{H}$. Otherwise, $S$ is said to be \emph{divisibly independent} over $\mathbb{H}$.
Let $\{a_1,...,a_n \} \subset \mathbb{D}$ and let $b \in \mathbb{D}$. We say that $b$ is \emph{divisibly dependent} on $S$ if there exists some $k \in \mathbb{N}$ such that $b^k = \beta \prod_{i=1}^{n}a_{i}^{k_i}$ with $k_i \in \mathbb{Z}$ for $i=1,...,n$ and $\beta \in \mathbb{H}$.
For $S \subset \mathbb{D}$ and $b \in \mathbb{D}$, we say that $b$ is \emph{divisibly dependent} on $S$ if there exists a finite subset  $\{a_1,....,a_n\} \subset S$ such that $b$ is \emph{divisibly dependent} on $\{a_1,...,a_n\}$.
\end{defn}

\begin{rem}\label{newrem1}
Let $m_1,m_2 \in \mathbb{H}[x_1,...,x_n]$ such that $m_2 = \alpha m_1$ where $\alpha \in \mathbb{H}$. If \linebreak $m_1(a_1,...,a_n) = m_2(a_1,...,a_n)$, then we get that $1 = \frac{m_1(a_1,...,a_n)}{m_2(a_1,...,a_n)} = \alpha^{-1}$ in $\mathbb{H}(a_1,...,a_n)$. So $\alpha = 1$, which yields that $m_2 = m_1$.
\end{rem}

\begin{lem}\label{newrem2}
The set $\{a_1, a_2,...,a_n \} \subset \mathbb{D}$ is divisibly dependent over $\mathbb{H}$ if and only if there exists $j \in \{1,...,n\}$ such that $a_j$ is divisibly dependent on $ \{a_1,...,a_{j-1},a_{j+1},...,a_n\}$ over $\mathbb{H}$.
\end{lem}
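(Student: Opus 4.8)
The plan is to prove both directions of the equivalence directly from the definition of divisible dependence.

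For the ``if'' direction, suppose some $a_j$ is divisibly dependent on $\{a_1,\dots,a_{j-1},a_{j+1},\dots,a_n\}$ over $\mathbb{H}$. Then there is a $k \in \mathbb{N}$ and $\beta \in \mathbb{H}$ with $a_j^k = \beta \prod_{i \neq j} a_i^{k_i}$, $k_i \in \mathbb{Z}$. I would clear the negative exponents: split the product into $\prod_{i \in P} a_i^{k_i}$ (with $k_i > 0$) on one side and $\prod_{i \in N} a_i^{-k_i}$ on the other, so that $a_j^k \prod_{i \in N} a_i^{|k_i|} = \beta \prod_{i \in P} a_i^{k_i}$. Both sides are now honest monomials in $\mathbb{H}[x_1,\dots,x_n]$ evaluated at $(a_1,\dots,a_n)$, and they are distinct as formal monomials: the left-hand one has $x_j$-degree $k \geq 1$ while the right-hand one has $x_j$-degree $0$ (one should check $k \geq 1$; if $k = 0$ the relation $1 = \beta\prod a_i^{k_i}$ still gives a nontrivial divisible dependence among the remaining $a_i$ after using Remark \ref{newrem1}, or one simply notes $k \in \mathbb{N}$ is taken $\geq 1$ in a dependence). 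Hence $\{a_1,\dots,a_n\}$ is divisibly dependent over $\mathbb{H}$.

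For the ``only if'' direction, suppose $m_1(a_1,\dots,a_n) = m_2(a_1,\dots,a_n)$ for distinct monomials $m_1 = \alpha_1 \prod x_i^{r_i}$, $m_2 = \alpha_2 \prod x_i^{s_i}$ in $\mathbb{H}[x_1,\dots,x_n]$. By Remark \ref{newrem1}, the two monomials cannot differ only by a scalar from $\mathbb{H}$, so the exponent vectors $(r_i)$ and $(s_i)$ are genuinely different; pick an index $j$ with $r_j \neq s_j$, say $r_j > s_j$. Working inside the semifield of fractions $\mathbb{H}(a_1,\dots,a_n)$ (which is legitimate since $\mathbb{D}$ is a domain), I would divide both sides of $m_1(a) = m_2(a)$ by $\alpha_2 \prod_{i} a_i^{\min(r_i,s_i)}$ and then isolate the positive power of $a_j$: this yields $a_j^{\,r_j - s_j} = (\alpha_2 \alpha_1^{-1}) \prod_{i \neq j} a_i^{\,t_i}$ with $t_i \in \mathbb{Z}$ and $r_j - s_j \in \mathbb{N}$, which is exactly the statement that $a_j$ is divisibly dependent on $\{a_1,\dots,a_{j-1},a_{j+1},\dots,a_n\}$ over $\mathbb{H}$.

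The only real subtlety — and the step I expect to need the most care — is the bookkeeping of exponents when passing between the ``equal monomials in $\mathbb{H}[x_1,\dots,x_n]$'' formulation and the ``$b^k = \beta\prod a_i^{k_i}$ with $k_i \in \mathbb{Z}$'' formulation: one must move factors across the equality inside $\mathbb{H}(a_1,\dots,a_n)$ to turn integer exponents into nonnegative ones and vice versa, and one must make sure the chosen exponent $k = r_j - s_j$ (resp.\ the isolated power in the other direction) is strictly positive so that the resulting pair of monomials is genuinely distinct and the resulting dependence is genuinely nontrivial. Remark \ref{newrem1} is precisely what rules out the degenerate case where the two monomials have identical exponent vectors and differ only by a unit, so it is invoked exactly once, at the start of the ``only if'' direction, to guarantee such an index $j$ exists.
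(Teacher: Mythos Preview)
Your proposal is correct and follows essentially the same route as the paper: for the ``if'' direction you clear negative exponents exactly as the paper does (the paper multiplies by $\prod_{i:k_i<0}a_i^{-k_i}$ and observes $x_j$ occurs only in $m_1$), and for the ``only if'' direction you make explicit what the paper compresses into a ``w.l.o.g.''\ --- namely, using Remark~\ref{newrem1} to guarantee the exponent vectors differ, choosing $j$ with $r_j\neq s_j$, and solving for $a_j^{r_j-s_j}$ in $\mathbb{H}(a_1,\dots,a_n)$. Your bookkeeping remark about dividing by $\alpha_2\prod a_i^{\min(r_i,s_i)}$ is slightly imprecise (what you actually want is to divide by $\alpha_1 a_j^{s_j}\prod_{i\neq j}a_i^{r_i}$ to land directly on $a_j^{r_j-s_j}=(\alpha_2\alpha_1^{-1})\prod_{i\neq j}a_i^{s_i-r_i}$), but the intended manipulation and conclusion are clear and correct.
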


\begin{proof}
Assume $\{a_1, a_2,...,a_n \}$ is divisibly dependent over $\mathbb{H}$. In view of Remark \ref{newrem1}, w.l.o.g., there exists $j \in \{1,...,n \}$ such that $a_j^{k_j} = m(a_1,...,a_{j-1},a_{j+1},...,a_n)$ where \linebreak $k_j \in \mathbb{N}$ and $m = \beta \prod_{i \in \{1,...,n\} \setminus \{j\}}x_{i}^{s_i}$ where $\beta \in \mathbb{H}$ and $s_i \in \mathbb{Z}$. Thus, by definition, $a_j$ is dependent on $ \{a_1,...,a_{j-1},a_{j+1},...,a_n\}$. Conversely, if $a_j$ is dependent on \linebreak $ \{a_1,...,a_{j-1},a_{j+1},...,a_n\}$ for some $j \in \{1,...,n \}$, then there exists some $k \in \mathbb{N}$ such that $a_j^k = \beta \prod_{i \in \{1,...,n\} \setminus \{ j \} }a_{i}^{k_i}$ with $k_i \in \mathbb{Z}$ and $\beta \in \mathbb{H}$. Multiplying both sides of the equation by
$$\prod_{i : k_i<0} a_{i}^{-k_i} \in \mathbb{H}[a_1,...,a_n],$$ we get that $m_1(a_1,...,a_n) = m_2(a_1,...,a_n)$, where
$$m_1(x_1,...,x_n) = x_j^k \prod_{i : k_i<0} x_{i}^{-k_i}, m_2(x_1,...,x_n) = \beta \prod_{i : k_i>0} x_{i}^{k_i} \in \mathbb{H}[x_1,...,x_n].$$
The monomials $m_1$ and $m_2$ are distinct since $x_j$ appears only in $m_1$.
\end{proof}

\begin{defn}
Let $\mathbb{D}$ be a bipotent domain extending a semifield  $\mathbb{K}$. An element $a \in \mathbb{D}$
is said to be \emph{$\mathbb{K}$-torsion} if $a$ is divisibly dependent on $\mathbb{K}$. We say that $\mathbb{D}$ is $\mathbb{K}$-torsion if every element of $\mathbb{D}$ is $\mathbb{K}$-torsion.
Let $A \subset \mathbb{D}$ be a divisibly independent subset of $\mathbb{D}$ over $\mathbb{H}$, such that all the elements of $A$ are invertible in $\mathbb{D}$. Then the \linebreak $\mathbb{H}$-extension generated by $A$, $\mathbb{H}[A] = \left\{\alpha\prod_{i=1}^{n}a_{i}^{k_i} \ : \ a_i \in A, \ \alpha \in \mathbb{H}, \ k_i \in \mathbb{N} \right\} \subset \mathbb{D}$ is said to be a \emph{divisibly-free}, or \emph{pure transcendental}, extension of $\mathbb{H}$ of \emph{rank} $| A |$ (the number of elements in $A$).
\end{defn}

From the observations made in Remark \ref{construction1} and these last definitions, using the notation of Remark \ref{construction1}, we deduce

\begin{prop}\label{prop1}
Let $\mathbb{D}= \mathbb{H}[a_1,...,a_n]$ as in Remark \ref{construction1}. Let $b_1,...,b_t \in \mathbb{D}$ and $c_{t+1},...,c_{m} \in \mathbb{D}$ be any representatives of the elements $[b_1],...,[b_t] \in G$ and \linebreak $[c_{t+1}], ..., [c_{m}] \in G$ given in Remark \ref{construction1}. Then the following assertions hold:

\begin{enumerate}
  \item For any $j =1,...,t$, $b_j$ is divisibly independent of $$\{b_i \ : \ i \in \{ 1,...,t \} \setminus \{ j \} \} \cup \{a_i \ : \ i=t+1,...,m \}$$ over $\mathbb{H}$.
  \item The set $\{ b_1, ... ,b_t \} \subset \mathbb{D}^{\ast}$ is divisibly independent over $\mathbb{H}$.
  \item For any $i = t+1,...,m$ the element $c_i \in \mathbb{D}^{\ast}$ is $\mathbb{H}$-torsion.
\end{enumerate}
\end{prop}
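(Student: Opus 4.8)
The plan is to push everything into the finitely generated abelian group $G = \tilde{\mathbb{D}^{\ast}} = \mathbb{D}^{\ast}/\mathbb{H}^{\ast}$ of Remark~\ref{construction1} (recall $\mathbb{D}$ is a semifield, so $\mathbb{D}^{\ast}$ is a group and $\mathbb{H}^{\ast}$ a normal subgroup), where the three assertions become transparent rewordings of the structural facts already recorded there. In assertion (1) I read the set $\{a_i \ : \ i = t+1,\dots,m\}$ as $\{c_i \ : \ i = t+1,\dots,m\}$, the representatives chosen in Remark~\ref{construction1}, which is the form that follows from that remark. The first step is to record the dictionary: for $d, d_1, \dots, d_k \in \mathbb{D}^{\ast}$ and $N \ge 1$, an identity $d^{N} = \beta \prod_{i} d_i^{e_i}$ with $\beta \in \mathbb{H}^{\ast}$ and $e_i \in \mathbb{Z}$ holds in $\mathbb{D}$ if and only if $[d]^{N} = \prod_i [d_i]^{e_i}$ in $G$, i.e. $[d]^{N} \in \langle [d_1], \dots, [d_k] \rangle$ (the quotient map $\mathbb{D}^{\ast}\to G$ is a group homomorphism killing $\mathbb{H}^{\ast}$, and conversely $[d]^{N}\in\langle[d_i]\rangle$ means $d^{N}(\prod d_i^{e_i})^{-1}\in\mathbb{H}^{\ast}$). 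Hence: $d$ is divisibly dependent on $\{d_1, \dots, d_k\}$ over $\mathbb{H}$ iff some positive power of $[d]$ lies in $\langle [d_1], \dots, [d_k]\rangle$; $d$ is $\mathbb{H}$-torsion iff $[d]$ has finite order in $G$; and, when $[d]$ has infinite order, $d$ is divisibly independent of $\{d_1, \dots, d_k\}$ over $\mathbb{H}$ iff $\langle [d] \rangle \cap \langle [d_1], \dots, [d_k] \rangle = \{[1]\}$ (for the last equivalence use that $[d]^{-N}\in\langle [d_1],\dots,[d_k]\rangle$ forces $[d]^{N}$ in there too, and that $[d]^{N} = [1]$ only for $N = 0$). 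All these conditions depend only on the cosets $[b_j], [c_i]$, so the arbitrary choice of representatives $b_j, c_i$ is harmless.

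With the dictionary in hand, (1) is immediate: each $\langle [b_j] \rangle$ is an infinite cyclic free factor of the free group $F$ of rank $t$, so $[b_j]$ has infinite order in $G$, while Remark~\ref{construction1} asserts precisely that $\langle [b_j] \rangle \cap \langle \{[b_i] : i \in \{1,\dots,t\}\setminus\{j\}\} \cup \{[c_i] : i = t+1, \dots, m\} \rangle = \{[1]\}$; by the dictionary this says exactly that $b_j$ is divisibly independent of $\{b_i : i \ne j\} \cup \{c_i : i = t+1, \dots, m\}$ over $\mathbb{H}$. Assertion (2) follows from Lemma~\ref{newrem2}, which reduces divisible independence of $\{b_1, \dots, b_t\}$ over $\mathbb{H}$ to showing that for each $j$ the element $b_j$ is divisibly independent of $\{b_i : i \in \{1,\dots,t\}\setminus\{j\}\}$ over $\mathbb{H}$; since this set is contained in the one handled in (1), and divisible independence of a fixed element passes to every subset (a finite subset witnessing dependence on the smaller set also witnesses dependence on the larger one, so contrapositively independence descends), (1) gives it. For (3), $[c_i]$ lies in the torsion subgroup $T = Tor(G)$; indeed $[c_i]^{n_i} = [1]$ for the integer $n_i \ge 2$ of Remark~\ref{construction1}, so $[c_i]$ has finite order and hence $c_i$ is $\mathbb{H}$-torsion by the dictionary.

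I do not anticipate a genuine obstacle here; the one point that needs care is the precise form of the single-element dictionary, namely that ``$d$ divisibly independent of $\{d_1,\dots,d_k\}$ over $\mathbb{H}$'' is equivalent to $\langle[d]\rangle \cap \langle[d_1],\dots,[d_k]\rangle = \{[1]\}$ \emph{only} under the extra hypothesis that $[d]$ has infinite order in $G$ --- a torsion element is always divisibly dependent on any set (even the empty set), consistently with its being $\mathbb{H}$-torsion. Once this is stated correctly, (1)--(3) are nothing more than translations of the free/torsion decomposition of $G$ and the intersection properties of its generators supplied by Remark~\ref{construction1}.
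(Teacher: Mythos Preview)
Your proof is correct and follows essentially the same route as the paper: both arguments pass to the quotient group $G=\mathbb{D}^{\ast}/\mathbb{H}^{\ast}$, read divisible dependence as a relation among cosets, and then invoke the intersection properties and torsion data from Remark~\ref{construction1}. Your explicit ``dictionary'' and the remark that the equivalence for (1) requires $[b_j]$ to have infinite order make the argument slightly more careful than the paper's, but the substance is identical.
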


\begin{proof}
For  $j =1,...,t$, $b_j$ is divisibly independent of
$$\{b_i \ : \ i \in \{ 1,...,t \} \setminus \{ j \} \} \cup \{a_i \ : \ i=t+1,...,m \}$$
over $\mathbb{H}$, for otherwise $b_j^{k} = \alpha \prod_{i \neq j}b_i^{k_i} \prod_{i=t+1}^{m}c_i^{r_i}$ for some $k \in \mathbb{N}$, $k_i,r_i \in \mathbb{Z}$ and $\alpha \in \mathbb{H}$. Thus $$[b_j]^k = \prod_{i \neq j}[b_i]\prod_{i=t+1}^{m}[c_i]^{r_i} \in \left\langle \{[b_i] \ : \ i \in \{ 1,...,t \} \setminus \{ j \}\} \ \cup \{[c_i] \ : \ i=t+1,...,m \} \right\rangle$$ which yields by Remark \ref{construction1} that $[b_j]^{k} = [1]$, contradicting the fact that $[b_j]$ is not a torsion element of $G$. In particular, this implies that $\{ b_1, ... ,b_t \}$ is divisibly independent over $\mathbb{H}$.
For the third assertion, for $i = t+1,...,m$ there exists $n_i \geq 2$ such that \linebreak $[c_i^{n_i}]=[c_i]^{n_i} = [1]$ and thus $c_i^{n_i}=\alpha \in \mathbb{H}$, so $c_i$ is $\mathbb{H}$-torsion.
\end{proof}

\begin{prop}\label{newrem3}
Let $\mathbb{D}$ be a bipotent domain extending $\mathbb{H}$.
Let $A \subset \mathbb{D}$ be a divisibly independent subset of $\mathbb{D}$ over $\mathbb{H}$, such that all of the elements of $A$ are invertible in $\mathbb{D}$. Then $a^{-1} \not \in \mathbb{H}[A]$ for any $a \in A$.
\end{prop}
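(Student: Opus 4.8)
The plan is to argue by contradiction. Suppose that $a^{-1} \in \mathbb{H}[A]$ for some $a \in A$. By the description of $\mathbb{H}[A]$ we may then write $a^{-1} = \alpha \prod_{i=1}^{n} a_i^{k_i}$ with $a_1,\dots,a_n \in A$ (which we may take pairwise distinct), $\alpha \in \mathbb{H}$, and $k_i \in \mathbb{N}$. Since $a^{-1} \neq 0$ and every element of $A$ is invertible in $\mathbb{D}$, none of the factors on the right vanishes, so $\alpha \neq 0$ and hence $\alpha^{-1} \in \mathbb{H}$.

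First I would multiply this relation through by $a$, obtaining $1 = \alpha\, a \prod_{i=1}^{n} a_i^{k_i}$, that is, $\alpha^{-1} = a \prod_{i=1}^{n} a_i^{k_i}$. Collecting the finitely many distinct elements of $A$ occurring on the right-hand side into a list $b_1,\dots,b_r$ with $b_1 = a$, this becomes $\alpha^{-1} = \prod_{l=1}^{r} b_l^{e_l}$ with $e_l \in \mathbb{N}$ and, crucially, $e_1 \geq 1$: the exponent of $a$ is at least $1$ because of the extra factor $a$ (plus possibly some $k_i$ in the case $a = a_i$). Thus the right-hand side is a genuinely non-constant monomial evaluated at elements of $A$.

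Now I would exhibit the divisible dependence directly. Set $m_1(x_1,\dots,x_r) = \alpha^{-1}$ and $m_2(x_1,\dots,x_r) = \prod_{l=1}^{r} x_l^{e_l}$ in $\mathbb{H}[x_1,\dots,x_r]$. These are distinct monomials, since $m_2$ has positive total degree while $m_1$ has degree $0$, yet $m_1(b_1,\dots,b_r) = \alpha^{-1} = m_2(b_1,\dots,b_r)$. Hence $\{b_1,\dots,b_r\}$ is a finite subset of $A$ that is divisibly dependent over $\mathbb{H}$, contradicting the assumed divisible independence of $A$ over $\mathbb{H}$. (Alternatively, rewriting $1 = \alpha\, a \prod a_i^{k_i}$ as an expression for a suitable positive power of $a$ in terms of the remaining $a_i$ and $\alpha^{-1}$ displays $a$ as divisibly dependent on $A \setminus \{a\}$, and one concludes via Lemma~\ref{newrem2}.)

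I do not expect a genuine obstacle: the only points needing care are the bookkeeping when $a$ itself occurs among the $a_i$ (handled by noting its exponent becomes $k_i+1 \geq 1$) and the remark that $\alpha \neq 0$, so that $\alpha^{-1}$ is a legitimate coefficient in $\mathbb{H}[x_1,\dots,x_r]$. The substance of the argument is simply that multiplying by $a$ converts the hypothetical membership $a^{-1}\in\mathbb{H}[A]$ into a nontrivial monomial identity among elements of $A$.
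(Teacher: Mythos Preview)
Your proof is correct and follows essentially the same contradiction strategy as the paper: assume $a^{-1}\in\mathbb{H}[A]$, multiply through by $a$, and read off a nontrivial monomial relation among finitely many elements of $A$. The only cosmetic difference is that the paper separates out the power of $a$ from the start (writing $a^{-1}=\alpha a^{k}\prod a_i^{k_i}$ with $a_i\in A\setminus\{a\}$) and concludes via Lemma~\ref{newrem2}, whereas your main line appeals directly to the definition of divisible dependence with the pair $(m_1,m_2)=(\alpha^{-1},\prod x_l^{e_l})$; you yourself note the Lemma~\ref{newrem2} route as an alternative, which is exactly what the paper does.
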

\begin{proof}
Assume that $a^{-1} \in \mathbb{H}[A]$. Then, there exist $a_1,...,a_n \in A \setminus \{ a \}$, $k_1,...,k_n~\in~\mathbb{N}$, $\alpha~\in~\mathbb{H}^{\ast}$ and $k \in \mathbb{N} \cup \{ 0 \}$, such that $a^{-1} = \alpha a^{k} \prod_{i=1}^{n}a_{i}^{k_i}$. Thus $a^{k+1} = \alpha^{-1} \prod_{i=1}^{n}a_{i}^{-k_i}$. Now, $k \geq 0$ so $k+1 \geq 1$, yielding that $a$ is divisibly dependent on $\{ a_1,...,a_n \} \subset A \setminus \{ a \}$ and so, by Lemma \ref{newrem2}, the set $ \{ a, a_1,...,a_n \}$ is divisibly dependent over $\mathbb{H}$, which yields that $A$ is divisibly dependent over $\mathbb{H}$, a contradiction.
\end{proof}

\begin{cor}\label{cor1}
In the setting of Proposition \ref{newrem3}, we have that $$\mathbb{H}[A] \subset \mathbb{H}(A) \subseteq \mathbb{D},$$ where $\mathbb{H}(A)=Frac(\mathbb{H}[A])$ is the semifield of fractions of $\mathbb{H}[A]$.
\end{cor}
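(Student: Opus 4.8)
The plan is to embed the abstract semifield of fractions $\mathbb{H}(A)=Frac(\mathbb{H}[A])$ explicitly into $\mathbb{D}$, and then to read off from Proposition \ref{newrem3} that the first inclusion is proper. The first observation I would make is that every nonzero element of $\mathbb{H}[A]$ is already invertible inside $\mathbb{D}$. Indeed, by the definition of a divisibly-free extension (equivalently, by Remark \ref{rem_affine_bipotent}, since $\mathbb{D}$ is bipotent) an element of $\mathbb{H}[A]$ has the form $q=\alpha\prod_{i=1}^{n}a_i^{k_i}$ with $a_i\in A$, $\alpha\in\mathbb{H}$ and $k_i\in\mathbb{N}$; since $\mathbb{H}$ is a semifield $\alpha$ is invertible in $\mathbb{H}\subseteq\mathbb{D}$, and each $a_i\in A$ is invertible in $\mathbb{D}$ by hypothesis, so $q\neq0$ implies $q^{-1}=\alpha^{-1}\prod_{i=1}^{n}a_i^{-k_i}\in\mathbb{D}$. (Divisible independence is not needed for this.)

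Using this, I would define $\varphi\colon\mathbb{H}(A)\to\mathbb{D}$ by $\varphi(p/q)=p\,q^{-1}$, the product being formed in $\mathbb{D}$, and check it is a well-defined injective homomorphism of semirings. Well-definedness is the one point that needs care: if $p/q=p'/q'$ in $Frac(\mathbb{H}[A])$, i.e.\ $pq'=p'q$ in $\mathbb{H}[A]$, then multiplying through by $q^{-1}q'^{-1}\in\mathbb{D}$ gives $p\,q^{-1}=p'\,q'^{-1}$, where one uses that $\mathbb{D}$ is multiplicatively cancellative so that this relation is well behaved. Additivity and multiplicativity of $\varphi$ are immediate, $\varphi$ restricts to the canonical inclusion $\mathbb{H}[A]\hookrightarrow\mathbb{H}(A)$, $p\mapsto p/1$, and $\varphi$ is injective because $p\,q^{-1}=0$ forces $p=0$. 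Hence $\varphi$ identifies $\mathbb{H}(A)$ with $\{p\,q^{-1}:p,q\in\mathbb{H}[A],\ q\neq0\}\subseteq\mathbb{D}$, yielding $\mathbb{H}[A]\subseteq\mathbb{H}(A)\subseteq\mathbb{D}$.

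It remains to see that the first inclusion is strict (when $A$ is nonempty): picking any $a\in A$, which is nonzero since it is invertible, we have $a^{-1}=1/a\in\mathbb{H}(A)$ but $a^{-1}\notin\mathbb{H}[A]$ by Proposition \ref{newrem3}, so $\mathbb{H}[A]\subsetneq\mathbb{H}(A)\subseteq\mathbb{D}$. I do not expect a genuine obstacle here; the content is concentrated in the first observation — that nonzero elements of $\mathbb{H}[A]$ are invertible in the ambient domain $\mathbb{D}$ — together with multiplicative cancellativity of $\mathbb{D}$, which together make the formal fraction semifield embed into $\mathbb{D}$ rather than merely map to it.
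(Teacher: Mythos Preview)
Your argument is correct and follows the same approach as the paper: invoke Proposition \ref{newrem3} for the strictness of $\mathbb{H}[A]\subset\mathbb{H}(A)$, and use invertibility in $\mathbb{D}$ of the elements of $A$ (together with bipotency, so that elements of $\mathbb{H}[A]$ are monomials) to get $\mathbb{H}(A)\subseteq\mathbb{D}$. The paper's proof is a two-line sketch of exactly these two points, whereas you have spelled out the embedding $\varphi\colon Frac(\mathbb{H}[A])\to\mathbb{D}$ in detail; your added care about well-definedness and the edge case $A=\emptyset$ is welcome but not strictly needed here.
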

\begin{proof}
 As a straightforward consequence of Proposition \ref{newrem3} we have that \\ $\mathbb{H}[A] \subset \mathbb{H}(A)$ is a proper subset. The second inclusion follows the assumption that every element of $A$ is invertible in $\mathbb{D}$.
\end{proof}

\begin{flushleft} We now turn to study bipotent torsion extensions of semifields.\end{flushleft} 
%

\begin{rem}\label{rem_torsion_free}
The multiplicative group of every semifield $\mathbb{H}$ is a torsion-free group, i.e., all of its elements that are not equal to $1$ have infinite order.
\end{rem}
\begin{proof}
If $a^{n} = 1$ for $a \in \mathbb{H}$ and $n \in \mathbb{N}$ then
$$a(a^{n-1} + a^{n-2} + \dots + a + 1) = a^{n} + (a^{n-1} + \dots + a) = 1 + (a^{n-1} + \dots + a) = a^{n-1} + \dots + a + 1$$ which yields that $a = 1$.
\end{proof}

The following example gives some motivation for our subsequent discussion of bipotent extensions, hopefully making our   definitions and observations very much intuitive.

\begin{exmp}
Let $\mathbb{H}$ be a bipotent semifield and let $\mathbb{D}$ be a semifield extending $\mathbb{H}$. Consider the equation $\lambda^k = \alpha \in \mathbb{H}$ for some $\alpha \neq 1$. The natural question to be asked is under what circumstances does this equation have a solution in $\mathbb{D}$?  We can rewrite the last equality as $\alpha^{-1}\lambda^k = 1$. Assume there exists some $\beta \in \mathbb{H}$ such that $\beta^k = \alpha$. In such a case we have that $(\beta^{-1}\lambda)^k = 1$, following Remark \ref{rem_torsion_free}, we get that $(\beta^{-1}\lambda)^k = 1 \Leftrightarrow \beta^{-1}\lambda = 1$, and so $\lambda = \beta$.\\
This observation is very similar to the classical algebra problem of finding roots for polynomials leading to the theory of algebraic extensions of a field. For instance, similarly to the property of an algebraically closed field,  assuming $\mathbb{H}$ to be divisibly closed in the above setting, will yield the existence of a solution in $\mathbb{H}$ for any equation of the form $\lambda^k = \alpha$.
\end{exmp}

Considering $\tilde{\mathbb{D}} = \mathbb{D}/{\mathbb{H}^{\ast}}$ as defined in Remark \ref{rem_quotient_monoid_stracure}, the following result is merely a straightforward consequence of a well-known result concerning the order of the elements of an abelian monoid. Nevertheless, we choose to write it explicitly.\\

\begin{rem}\label{torrem1}
Let $\mathbb{D}$ be a bipotent domain extending $\mathbb{H}$ and let $a \in \mathbb{D}$ be a torsion element, i.e., $a^k \in \mathbb{H}$ for some $k \in \mathbb{N}$. Torsion powers are an ideal of $\mathbb{Z}$ so are principal. Taking $k \in \mathbb{N}$ to be the generator of the ideal, we have that $a^i \not \in \mathbb{H}$ for any $ i \in
\mathbb{N}$ with $ 1 \leq i <k$ and  $$ \mathbb{H}[a] = \{ \alpha \cdot a^{j}
: \alpha \in \mathbb{H}, \ j = 0, \dots , k-1 \}.$$
\end{rem}

\begin{defn}
In the setting of Remark \ref{torrem1}, define the \emph{degree} of $a \in \mathbb{D}$ over $\mathbb{H}$,
$deg_{ \ \mathbb{H}}(a)$, to be the minimal $k \in \mathbb{N}$ such that $a^k \in \mathbb{H}$. If there
is no such $k$, we define $deg_{ \ \mathbb{H}}(a) = \infty$. Define
the \emph{rank} (or \emph{dimension}) of $\mathbb{H}[a]$ over
$\mathbb{H}$, $[ \mathbb{H}[a] : \mathbb{H} ]$  to be $deg_{ \
\mathbb{H}}(a)$.
\end{defn}

\begin{defn}
Let $\mathbb{D}$ be a bipotent domain extending a semifield $\mathbb{H}$. Then for $a, b_1, \dots b_n \in
\mathbb{D}$, we say that $a$ is \emph{linearly independent} of $\{
b_1, \dots, b_n \}$ over $\mathbb{H}$ if $a \neq \alpha b_i$ for any
$\alpha \in \mathbb{H}$ and any $b_i$. Otherwise, if such $\alpha$
and $b_i$ exist we say that $a$ is \emph{linearly dependent} on $\{ b_1,
\dots, b_n \}$ over $\mathbb{H}$. For any set $\mathrm{B} \subset
\mathbb{D}$ and $a \in \mathbb{D}$, $a$ is said to be  \emph{linearly dependent} on
$\mathrm{B}$ over $\mathbb{H}$ if there exists an element in
$\mathrm{B}$ on which $a$ is dependent over $\mathbb{H}$; Otherwise
$a$ is linearly independent on $\mathrm{B}$ over $\mathbb{H}$. A set
$\mathrm{B} \subset \mathbb{D}$ is \emph{linearly independent} over $\mathbb{H}$
if every $b \in \mathrm{B}$ is linearly independent of $\mathrm{B} \setminus
\{ b \}$. We say that $\mathrm{B} \subset \mathbb{D}$ \emph{spans}
$\mathbb{D}$ over $\mathbb{H}$ if any $a \in \mathbb{D}$ is
linearly dependent on $\mathrm{B}$ over $\mathbb{H}$. If $\mathrm{B}$ is
linearly independent over $\mathbb{H}$ and \emph{spans} $\mathbb{D}$ over
$\mathbb{H}$, i.e. $ \mathbb{D} = \{ \alpha  w : \alpha \in
\mathbb{H}, \ w \in \mathrm{B} \}$, we say that $\mathrm{B}$ is a
\emph{basis} of $\mathbb{D}$ over $\mathbb{H}$. We define $[
\mathbb{D} : \mathbb{H} ]$ to be $|B| \in \mathbb{N} \cup \{\infty
\}$.
\end{defn}

\begin{note}
In what follows, in order to avoid confusion with classical algebra, we refer to `linear dependence' defined above as `dependence'.
\end{note}

\begin{rem}

\begin{enumerate}
  \item Although the notion of dependence is essentially binary, it
corresponds to linear dependence in classical algebra. Consider the expression
$\sum_{i=1}^{n}\alpha_i b_i$ where $\{b_1,...,b_n \}$ is a set of  independent
elements. By the definition of independence, we have that $\alpha_i b_i \neq \alpha_j b_j$ for
every $1 \leq i < j \leq n$. Thus $\sum_{i=1}^{n}\alpha_i b_i = \alpha_k b_k$
for some $\alpha_k b_k$, $1 \leq k \leq n$,  by bipotency. So $a = \sum_{i=1}^{n}\alpha_i b_i$ implies that
$a = \alpha_k b_k$ for the appropriate index k.

\item In terms of the above definitions, Remark \ref{torrem1} shows that $$\{ \alpha \cdot a^j :
\alpha \in \mathbb{H}, \  0 \leq j \leq deg_{\mathbb{H}}(a)-1 \}$$ is a basis of
$\mathbb{H}[a]$ over $\mathbb{H}$.

  \item Let $A \subset \mathbb{D}$ be a set such that for any $a,b \in A$, $a$ and $b$ are independent over $\mathbb{H}$. Then $A$ is independent over $\mathbb{H}$.  This is a straightforward consequence of the abstract dependence relation.

\item  If $a$ is not $\mathbb{H}$-torsion then $a^m \neq \alpha a^{k}$ for any $\alpha \in \mathbb{H}$ and any $m > k \geq 0$ since otherwise $a^{m-k} = \alpha \in \mathbb{H}$, contradicting the assumption that $a$ is not $\mathbb{H}$-torsion. Thus, the set $\{ a^{k} : k \geq 0 \}$ is independent over $\mathbb{H}$, and so
    $[ \mathbb{H}[a] : \mathbb{H} ] = \infty$.

\end{enumerate}
\end{rem}

\ \\

Lemma \ref{lem1}, Remark \ref{rem_transitivity_of_torsion} and Proposition \ref{torprop1} which we now introduce, are all straightforward consequences of well known results in the theory of abelian monoids,
when considering $\tilde{\mathbb{D}} = \mathbb{D}/{\mathbb{H}^{\ast}}$ as defined in Remark \ref{rem_quotient_monoid_stracure}, where $\mathbb{D}$ is a bipotent domain extending the semifield $\mathbb{H}$.

\begin{lem}\label{lem1}
Let $\mathbb{H}$ be a semifield and let $\mathbb{D}, \mathbb{K}$ be bipotent domains
extending $\mathbb{H}$ such that $\mathbb{H} \subset \mathbb{K} \subset \mathbb{D}$.
Then $[ \mathbb{D} : \mathbb{H} ] \geq [ \mathbb{K} : \mathbb{H} ]$.
Moreover, if $\mathbb{K}$ is a semifield then  $[ \mathbb{D} : \mathbb{H} ] = [ \mathbb{D} :
\mathbb{K} ] \cdot [ \mathbb{K} : \mathbb{H} ].$
\end{lem}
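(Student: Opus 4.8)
The plan is to pass to the quotient monoids $\tilde{\mathbb{D}} = \mathbb{D}/\mathbb{H}^{\ast}$ and $\tilde{\mathbb{K}} = \mathbb{K}/\mathbb{H}^{\ast}$ and interpret everything in terms of abelian-monoid indices, exactly as the paragraph preceding the lemma suggests. The key dictionary entries are: by Remark \ref{rem_quotient_monoid_stracure} the cosets $[a]$ for $a \in \mathbb{D}$ are well defined, two elements $a,b$ are linearly dependent over $\mathbb{H}$ iff $[a]=[b]$ in $\tilde{\mathbb{D}}$, and consequently a transversal $\mathrm{B}$ of the cosets meeting $\mathbb{D}$ (together with handling of $0$) is precisely a basis of $\mathbb{D}$ over $\mathbb{H}$, so that $[\mathbb{D}:\mathbb{H}] = |\tilde{\mathbb{D}}^{\ast}| + $ (a bookkeeping constant for $0$, which I will absorb by working with $\tilde{\mathbb{D}}^{\ast} = \mathbb{D}^{\ast}/\mathbb{H}^{\ast}$ and noting the $0$ coset contributes uniformly on all three sides). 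Under this dictionary, $\mathbb{H}\subset\mathbb{K}\subset\mathbb{D}$ gives an inclusion of abelian groups $\tilde{\mathbb{K}}^{\ast} \hookrightarrow \tilde{\mathbb{D}}^{\ast}$ (when $\mathbb{K}$ is a semifield these really are groups, by Remark \ref{rem_quotient_group_stracure}; when $\mathbb{K}$ is only a domain we still get an injective monoid homomorphism), and the statement becomes the two standard facts about index of abelian groups/monoids.

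First I would prove the inequality $[\mathbb{D}:\mathbb{H}] \geq [\mathbb{K}:\mathbb{H}]$. Pick a basis $\mathrm{B}_{\mathbb{K}}$ of $\mathbb{K}$ over $\mathbb{H}$; its elements lie in $\mathbb{K} \subseteq \mathbb{D}$, and distinct elements of $\mathrm{B}_{\mathbb{K}}$ remain pairwise independent over $\mathbb{H}$ when viewed in $\mathbb{D}$ (independence over $\mathbb{H}$ is the condition $b \neq \alpha b'$, $\alpha\in\mathbb{H}$, which does not depend on the ambient domain). By item (3) of the Remark preceding the lemma, $\mathrm{B}_{\mathbb{K}}$ is then an independent subset of $\mathbb{D}$ over $\mathbb{H}$. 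Extending it (using the abstract dependence relation / a Zorn's lemma argument, or just comparing cardinalities of coset sets) to a basis of $\mathbb{D}$ over $\mathbb{H}$, or more directly observing that an independent set has cardinality at most $[\mathbb{D}:\mathbb{H}]$, gives $|\mathrm{B}_{\mathbb{K}}| \leq [\mathbb{D}:\mathbb{H}]$, i.e. $[\mathbb{K}:\mathbb{H}] \leq [\mathbb{D}:\mathbb{H}]$; this also covers the case $[\mathbb{D}:\mathbb{H}]=\infty$ trivially.

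Next, assuming $\mathbb{K}$ is a semifield, I would prove the multiplicativity $[\mathbb{D}:\mathbb{H}] = [\mathbb{D}:\mathbb{K}]\cdot[\mathbb{K}:\mathbb{H}]$. Let $\mathrm{B}_{\mathbb{K}/\mathbb{H}}$ be a basis of $\mathbb{K}$ over $\mathbb{H}$ and $\mathrm{B}_{\mathbb{D}/\mathbb{K}}$ a basis of $\mathbb{D}$ over $\mathbb{K}$; the claim is that $\mathrm{B} = \{\, w\cdot v : w \in \mathrm{B}_{\mathbb{D}/\mathbb{K}},\ v \in \mathrm{B}_{\mathbb{K}/\mathbb{H}}\,\}$ is a basis of $\mathbb{D}$ over $\mathbb{H}$, with the products distinct. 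Spanning: given $d\in\mathbb{D}^{\ast}$, write $d = \kappa w$ with $\kappa\in\mathbb{K}^{\ast}$, $w\in\mathrm{B}_{\mathbb{D}/\mathbb{K}}$, then $\kappa = \alpha v$ with $\alpha\in\mathbb{H}^{\ast}$, $v\in\mathrm{B}_{\mathbb{K}/\mathbb{H}}$, so $d = \alpha(wv)$; the $0$ element is handled separately and identically on both sides. Independence and distinctness of the products: if $\alpha w v = \alpha' w' v'$ with $\alpha,\alpha'\in\mathbb{H}^{\ast}$, then $(\alpha/\alpha' \cdot v/v')\, w = w'$ exhibits $w$ as dependent on $w'$ over $\mathbb{K}$ (here I use that $\mathbb{K}$ is a semifield, so $v/v' \in \mathbb{K}^{\ast}$), forcing $w = w'$, then cancelling $w$ (here $\mathbb{D}$ is a domain, so multiplicatively cancellative) gives $\alpha v = \alpha' v'$, hence $v = v'$ and $\alpha = \alpha'$. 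Therefore $|\mathrm{B}| = |\mathrm{B}_{\mathbb{D}/\mathbb{K}}|\cdot|\mathrm{B}_{\mathbb{K}/\mathbb{H}}|$ and the formula follows (with the usual cardinal-arithmetic conventions when one factor is infinite).

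The main obstacle I anticipate is purely bookkeeping rather than conceptual: making precise the relationship between "basis" as defined (a transversal together with the treatment of the absorbing element $0$) and the cardinality of the coset set, and checking that the $0$ coset does not disturb the multiplicativity count — this is why I prefer to phrase the core argument in $\mathbb{D}^{\ast}/\mathbb{H}^{\ast}$, $\mathbb{K}^{\ast}/\mathbb{H}^{\ast}$ where the structures are honest abelian groups and Lagrange-type multiplicativity is standard, and then transfer back. The one place where the semifield hypothesis on $\mathbb{K}$ is genuinely needed is the independence direction of the product-basis argument, where I must divide by an element of $\mathbb{K}$; without it $v/v'$ need not lie in $\mathbb{K}$ and the argument collapses, consistent with the lemma only claiming the inequality in that case.
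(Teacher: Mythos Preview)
Your proposal is correct and follows essentially the same approach as the paper: both arguments reduce to coset-counting in $\mathbb{D}^{\ast}/\mathbb{H}^{\ast}$ (as the paper itself flags in the paragraph preceding the lemma), prove the inequality by noting that an $\mathbb{H}$-independent subset of $\mathbb{K}$ stays independent in $\mathbb{D}$, and prove multiplicativity by showing that the products $wv$ of basis elements form a basis of $\mathbb{D}$ over $\mathbb{H}$. Your independence step for the product basis is in fact slightly more complete than the paper's, since you explicitly cancel $w$ after concluding $w=w'$ to force $v=v'$, whereas the paper's version stops at ``$w_{j_1},w_{j_2}$ dependent over $\mathbb{K}$'' without separately treating the case $w_{j_1}=w_{j_2}$, $u_{r_1}\neq u_{r_2}$.
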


\begin{proof}
In order to simplify notations of the proof, $w_0=u_0=1_{\mathbb{H}}$ denotes the identity element of $\mathbb{H}$ with respect to multiplication inside $\mathbb{D}$ and $\mathbb{K}$, respectively. \newline Let $[ \ \mathbb{D} : \mathbb{H}
\ ] = s < \infty$, thus $ \mathbb{D} = \{ \alpha w_j : \alpha \in
\mathbb{H}, \ w_j \in \mathbb{D}, \ j = 0, \dots , s-1 \}$ for a base $\{w_0,...,w_{s-1} \}$. If $[ \
\mathbb{K} : \mathbb{H} \ ] = t > s$, then writing
$$ \mathbb{K} = \{ \beta  \cdot u_i : \beta \in \mathbb{H}, \ u_i \in \mathbb{K},  \ i = 0, \dots , t-~1~\}$$
for a base $\{u_0,...,u_{t-1} \}$, as $\mathbb{K} \subset \mathbb{D}$ we have that for
each $i = 0, \dots , t-1$ , $u_i = \alpha_{i,j} w_j$ for some $j \in
\{ 0, \dots , s-1 \} $. Since $t>s$ there exist $i,j \in \{0, \dots
, t-1 \}$,  $ i < j$, and  $\alpha_1 , \alpha_2 \in \mathbb{H} $
such that $ u_i = \alpha_1 w_k$ and $ u_j = \alpha_2 w_k$ for some $k \in
\{0, ... , s-1 \}$. Thus, multiplying the last two equations by
$\alpha_2$ and $\alpha_1$ respectively, we get that $u_i =  \gamma
u_j$ where $\gamma = \frac{\alpha_1 \beta_2}{\alpha_2 \beta_1} \in
\mathbb{H}$ contradicting the fact that $u_i$ and $u_j$ are
independent over $\mathbb{H}$. Thus $t \leq s$ and we have proved the first assertion.
For the second assertion, the only nontrivial case is when
$\mathbb{D}$ is of finite rank over $\mathbb{K}$ and $\mathbb{K}$ is
of finite rank over $\mathbb{H}$, say $s$ and $t$, respectively. In such a
case, using the above notation, \small$$\mathbb{D} =
\{ a \cdot w_j  :  a \in \mathbb{K}, \ j = 0, \dots, s-1 \} = \{ \alpha \cdot u_r \cdot w_j  :  \alpha \in \mathbb{H}, \ r = 0, \dots, t-1, \ j = 0, \dots, s-1 \}.$$ \small
We argue that the $u_r w_j$ are independent
over $\mathbb{H}$. Indeed, if $\{ u_{r}w_{j} \}$ are dependent over $\mathbb{H}$, then there exist $u_{r_1} w_{j_1}$ and  $u_{r_2} w_{j_2}$ such that $\alpha_1u_{r_1}w_{j_1} = \alpha_2 u_{r_2} w_{j_2}$ where $u_{r_1}
\neq u_{r_2}$ or $w_{j_1} \neq w_{j_2}$ and  $\alpha_1, \alpha_2 \in \mathbb{H}$ non-zero. Then $w_{j_1} = \left(\frac{\alpha_2u_{r_2}}{\alpha_1u_{r_1}}\right) w_{j_2} = a w_{j_2}$ with $a \in \mathbb{K}$. So,
$w_{j_1},w_{j_2}$ are dependent over $\mathbb{K}$, contradicting our assumptions. Thus by definition, we conclude that $$\left[ \mathbb{D} : \mathbb{H} \right] = \left| \{u_r \cdot w_j \ :  \ r = 0, \dots, t-1, \ j = 0,
\dots s-1 \}\right| = s \cdot t = \left[ \mathbb{D} : \mathbb{K} \right] \cdot \left[ \mathbb{K} : \mathbb{H} \right],$$ as desired.
\end{proof}

\begin{rem}
By Lemma \ref{lem1}, it is evident that if $\mathbb{H} \subseteq
\mathbb{H'}$ are two bipotent semifields, then
$deg_{ \ \mathbb{H'}}(a) \leq  deg_{ \ \mathbb{H}}(a).$
\end{rem}

\begin{rem}\label{torrem2}
Let $\mathbb{H}$ be a semifield and let $\mathbb{D}$ be a bipotent domain extending $\mathbb{H}$.
Let $\{a_1, a_2,...,a_n \} \subset \mathbb{D}$ such that $a_i$ is $\mathbb{H}$-torsion for $i=1,...,n$. If $b \in \mathbb{D}$ such that $b$ is dependent on $\{a_1, a_2,...,a_n \}$ over $\mathbb{H}$, then $b$ is $\mathbb{H}$-torsion.
\end{rem}

\begin{rem}\label{rem_transitivity_of_torsion}
If $\mathbb{D}_1 \subset \mathbb{D}_2$ are bipotent extensions of a semi-field $\mathbb{H}$, such that
$\mathbb{D}_1$  is $\mathbb{H}$-torsion and $\mathbb{D}_2$ is
$\mathbb{D}_1$-torsion, then $\mathbb{D}_2$ is $\mathbb{H}$-torsion. Indeed, as  $\mathbb{D}_1$ is $\mathbb{H}$-torsion, for any  $ \alpha \in
\mathbb{H}$ there exist $k \in \mathbb{N}$ and $a \in \mathbb{D}_1$
such that $\alpha = a^k$. Since $\mathbb{D}_2$ is $\mathbb{D}_2$-torsion ,
there exists $m \in \mathbb{N}$ and $b \in
\mathbb{D}_1$ such that $a = b^m$. So $\alpha = a^k = b^{mk}$. Since this is
true for any  $ \alpha \in \mathbb{H}$ we have $\mathbb{D}_2$ is $\mathbb{H}$-torsion as desired.
\end{rem}

\begin{prop}\label{torprop1}
Let $\mathbb{H}$ and $\mathbb{D}$ be as above. Let $\mathbb{H}[a]
\subset \mathbb{D}$ be the bipotent domain extending $\mathbb{H}$ generated
by $a \in \mathbb{D}$ over $\mathbb{H}$. Then $\mathbb{H}[a]$ is a bipotent semifield iff
$a$ is an $\mathbb{H}$-torsion element.
\end{prop}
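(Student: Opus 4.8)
The plan is to read off the structure of $\mathbb{H}[a]$ from Remark \ref{torrem1} in one direction, and from the ``a bipotent semiring element collapses to a single monomial'' phenomenon in the other. For the implication ``$a$ is $\mathbb{H}$-torsion $\Rightarrow \mathbb{H}[a]$ is a semifield'': first dispose of the degenerate case $a=0$, where $\mathbb{H}[a]=\mathbb{H}$ is already a semifield. Otherwise set $k=\deg_{\mathbb{H}}(a)$; since $\mathbb{D}$ is multiplicatively cancellative and $a\neq 0$ we get $a^k\neq 0$, so $a^k=\alpha_0\in\mathbb{H}^{\ast}$. By Remark \ref{torrem1}, $\mathbb{H}[a]=\{\alpha a^j \ :\ \alpha\in\mathbb{H},\ 0\le j\le k-1\}$; this is a commutative subsemiring of the domain $\mathbb{D}$, hence has no zero divisors and inherits bipotency. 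It remains only to produce an inverse for each nonzero $\beta a^j$: for $j=0$ take $\beta^{-1}$, and for $1\le j\le k-1$ take $\beta^{-1}\alpha_0^{-1}a^{k-j}\in\mathbb{H}[a]$, whose product with $\beta a^j$ is $\alpha_0^{-1}a^k=1$. This settles one direction with only routine verification.

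For the converse ``$\mathbb{H}[a]$ is a semifield $\Rightarrow a$ is $\mathbb{H}$-torsion'': again $a=0$ is immediate, so assume $a\neq 0$. Every element of $\mathbb{H}[a]=\{f(a):f\in\mathbb{H}[x]\}$ is a finite sum $\sum_i\alpha_i a^{m_i}$ by distributivity, and by bipotency any such sum equals one of its summands; hence every nonzero element of $\mathbb{H}[a]$ has the form $\alpha a^m$ with $\alpha\in\mathbb{H}^{\ast}$ and $m\ge 0$. Since $\mathbb{H}[a]$ is a semifield, $a^{-1}\in\mathbb{H}[a]$, so $a^{-1}=\alpha a^m$ for some such $\alpha,m$, and therefore $a^{m+1}=\alpha^{-1}\in\mathbb{H}^{\ast}$ with $m+1\ge 1$; this is precisely the assertion that $a$ is $\mathbb{H}$-torsion.

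There is essentially no serious obstacle here; the proof is short. The one point that needs a little care is invoking the ``collapse to a single monomial'' step correctly — it is the combination of distributivity (which rewrites an arbitrary semiring element as a sum of $\mathbb{H}$-multiples of powers of $a$) with bipotency (which selects a single such term), exactly as recorded in the earlier remark on dependence — and keeping the degenerate case $a=0$ separate so that one never divides by $a^k=0$. One could also phrase the converse as the contrapositive via Proposition \ref{newrem3} applied to the singleton $A=\{a\}$ (if $a$ is not $\mathbb{H}$-torsion then, by Remark \ref{newrem1}, $\{a\}$ is divisibly independent over $\mathbb{H}$, whence $a^{-1}\notin\mathbb{H}[a]$), but that route first requires knowing $a$ is invertible in $\mathbb{D}$, so the direct computation above is cleaner and self-contained.
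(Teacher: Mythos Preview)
Your proof is correct and follows essentially the same approach as the paper's: in the forward direction you use the monomial description of $\mathbb{H}[a]$ from Remark \ref{torrem1} and exhibit the explicit inverse $\beta^{-1}\alpha_0^{-1}a^{k-j}$, and in the converse you invert $a$ inside $\mathbb{H}[a]$, collapse $a^{-1}$ to a single monomial $\alpha a^m$ by bipotency, and read off $a^{m+1}\in\mathbb{H}^{\ast}$. Your version is in fact slightly tidier than the paper's (you separate out $a=0$ and justify $\alpha_0\in\mathbb{H}^{\ast}$ via multiplicative cancellativity), but the argument is the same.
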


\begin{proof}
Let $a \in \mathbb{D}$ be $\mathbb{H}$-torsion. If $a \in \mathbb{H}$ then $\mathbb{H}[a] =
\mathbb{H}$ and we are done. Therefore we may assume that $a \in
\mathbb{D} \setminus \mathbb{H}$. Let $k \in \mathbb{N}$ be the
minimal natural number such that $a^k = \beta$ for some $\beta \in
\mathbb{H}$. In view of the remark above, such $k$ exists. Let $b \in
\mathbb{H}[a]$, then by the construction above we can write $b = f(a)$
where $ f(x) = \alpha x^m \in \mathbb{H}[x]$ is a monomial, and $m \in
\mathbb{N} \cup \{0\}$, i.e., $b = \alpha a^m$. Taking $m = qk+r$ where $r,q \in \mathbb{N}
\cup \{0\}$ such that $r<k$ or $r=0$, we can rewrite $b = \alpha a^m = \alpha a^{kq} a^{r}=  (\alpha \beta^q) a^{r}$. Notice that if $r=0$ we get $b \in \mathbb{H}$, thus invertible. So we may assume $r>0$ and $(k-r) > 0$.
Let $c=(k-r)a$. Then  $bc =  (\alpha \beta^{q})a^k  = \alpha \beta^{q+1} \in
\mathbb{H}$, and thus $bc$ is invertible. Taking
$\gamma = (bc)^{-1}$ and defining $c' = \gamma c$, we get $bc' =
c'b=1$, so $b$ is invertible and $\mathbb{H}[a]$ is a semi-field. \\
For the second direction of the assertion, assume $\mathbb{H}[a]$ is a
semi-field extending $\mathbb{H}$. Since $a \in \mathbb{H}[a]$ we
have that $a$ is invertible in $\mathbb{H}[a]$ thus we can write
$a^{-1} = (\delta)a^{t}$ for some $t \in \mathbb{N} \cup
\{0\}$, $\delta \in \mathbb{H}$. From this we have
that $a^{t+1} = \delta$. Thus, by definition, $a$ is an $\mathbb{H}$-torsion element.
\end{proof}

\begin{cor}\label{torcor1}
By Proposition \ref{torprop1}, it is evident that $\mathbb{H}[a]$ is a bipotent semifield if and only if $a^{-1} \in
\mathbb{H}[a]$. We deduce that $\mathbb{H}[a]$ is not a bipotent semifield
if and only if  $a^{-k} \not \in \mathbb{H}[a]$ for any $k \in \mathbb{N}$.
Using Proposition \ref{torprop1}, we have that if $\mathbb{H}[a]$ is not a bipotent semifield
then $a$ is not $\mathbb{H}$-torsion, which in turn implies that  $a^{-i} \neq a^{-j}$ are independent over $\mathbb{H}$ for any $i,j \in \mathbb{N}$ such that  $i < j$ (for otherwise $a^{j-i}= \beta \in \mathbb{H}$). Consequently, we get that $[ \ \mathbb{H}(a) : \mathbb{H}[a] \ ] = deg_{\mathbb{H}[a]}(a^{-1}) = \infty.$
\end{cor}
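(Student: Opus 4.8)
The plan is to obtain every assertion of the corollary as a formal consequence of Proposition \ref{torprop1} together with the two bipotent normal forms already recorded: $\mathbb{H}[a]=\{\alpha a^{m}:\alpha\in\mathbb{H},\ m\in\mathbb{N}\cup\{0\}\}$ and $\mathbb{H}(a)=Frac(\mathbb{H}[a])=\{\alpha a^{k}:\alpha\in\mathbb{H},\ k\in\mathbb{Z}\}$.

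I would first dispose of the equivalence that $\mathbb{H}[a]$ is a bipotent semifield if and only if $a^{-1}\in\mathbb{H}[a]$. The forward implication is immediate: in a semifield every nonzero element is invertible, so in particular $a^{-1}\in\mathbb{H}[a]$. For the converse, if $a^{-1}\in\mathbb{H}[a]$ then by the normal form $a^{-1}=\delta a^{t}$ for some $\delta\in\mathbb{H}^{\ast}$ and $t\geq 0$, whence $a^{t+1}=\delta^{-1}\in\mathbb{H}$, so $a$ is $\mathbb{H}$-torsion and Proposition \ref{torprop1} gives that $\mathbb{H}[a]$ is a bipotent semifield. Negating this (together with Proposition \ref{torprop1}) reduces the second assertion to showing that $a^{-1}\notin\mathbb{H}[a]$ if and only if $a^{-k}\notin\mathbb{H}[a]$ for every $k\in\mathbb{N}$; one direction is just the case $k=1$, and for the other I argue by contraposition exactly as above, since $a^{-k}=\delta a^{t}\in\mathbb{H}[a]$ forces $a^{k+t}=\delta^{-1}\in\mathbb{H}$, hence $a$ is $\mathbb{H}$-torsion and $a^{-1}\in\mathbb{H}[a]$ by the first equivalence.

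Next, that $\mathbb{H}[a]$ failing to be a bipotent semifield forces $a$ to be not $\mathbb{H}$-torsion is precisely the contrapositive of one direction of Proposition \ref{torprop1}. From non-torsion, if $a^{-i}=\alpha a^{-j}$ for some $\alpha\in\mathbb{H}$ and $i<j$, then $a^{j-i}=\alpha\in\mathbb{H}$ with $j-i\geq 1$, contradicting non-torsion, so the powers $a^{-k}$ are pairwise independent over $\mathbb{H}$. Finally, for the rank statement, the second assertion gives $(a^{-1})^{k}\notin\mathbb{H}[a]$ for every $k\in\mathbb{N}$, hence $deg_{\mathbb{H}[a]}(a^{-1})=\infty$ by the definition of degree; and since the normal form identifies $\mathbb{H}(a)$ with the extension $\mathbb{H}[a][a^{-1}]$ obtained by adjoining $a^{-1}$ to $\mathbb{H}[a]$, the definition of the rank yields $[\mathbb{H}(a):\mathbb{H}[a]]=deg_{\mathbb{H}[a]}(a^{-1})=\infty$.

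I do not expect a genuine obstacle: the corollary is essentially a repackaging of Proposition \ref{torprop1} and the bipotent normal forms. The only point that needs a little care is that the degree and the rank are being applied with base $\mathbb{H}[a]$, which is a bipotent domain but not a semifield, and to the element $a^{-1}$, which is not torsion over it; this is harmless, since the degree is by convention $\infty$ exactly when no torsion power exists and the rank is defined to equal the degree.
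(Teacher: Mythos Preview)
Your proposal is correct and follows precisely the line of reasoning the paper intends: the corollary carries no separate proof in the paper, its justification being the inline remarks (``for otherwise $a^{j-i}=\beta\in\mathbb{H}$'') together with the bipotent normal form for $\mathbb{H}[a]$ and Proposition~\ref{torprop1}, all of which you unpack explicitly. Your closing caveat that $deg_{\mathbb{H}[a]}$ and the rank are being taken over a bipotent domain rather than a semifield is a fair observation; the paper is tacitly extending the definition, and as you note this is harmless since the convention $deg=\infty$ applies.
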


\begin{proof}
Since $b$ is dependent on $\{a_1, a_2,...,a_n \}$ over $\mathbb{H}$ we have that $b^k = \beta \prod_{i=1}^{n}a_{i}^{k_i}$ with $\beta \in \mathbb{H}$, $k \in \mathbb{N}$ and $k_i \in \mathbb{Z}$.
For $i = 1,...,n$, $a_{i}$ is $\mathbb{H}$-torsion, thus so is $c_{i} = a_{i}^{k_i}$, so there exists $s_i \in \mathbb{N}$, minimal, such that $c_{i}^{s_i} \in \mathbb{H}$. Let $s = lcm(s_1,...,s_n)$, then there are $r_i \in \mathbb{N}$ such that $s = r_is_i$ and $b^{ks} = (b^k)^s = \prod_{i=1}^{n}(c_i^{s_i})^{r_i} = \prod_{i=1}^{n}(\prod_j b_j^{k_{i,j}})^{r_i} \in \mathbb{H}$. Thus $b$ is $\mathbb{H}$-torsion.
\end{proof}

\begin{cor}\label{torcor2}
By Remark \ref{torrem2}, the $\mathbb{H}$-torsion elements in $\mathbb{D}$ form a sub-domain $Tor_{\mathbb{H}}(\mathbb{D})$ of $\mathbb{D}$. Moreover, if $\mathbb{D}$ is a semifield, so is $Tor_{\mathbb{H}}(\mathbb{D})$. Indeed, let $a,b \in \mathbb{D}$ be $\mathbb{H}$-torsion. Since $ab$ and $a+b \in \{ a, b \}$, are divisibly dependent on $\{ a,b \}$, by Remark \ref{torrem2} they are $\mathbb{H}$-torsion.
\end{cor}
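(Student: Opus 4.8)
The plan is to verify directly that $T := Tor_{\mathbb{H}}(\mathbb{D})$ is a subsemiring of $\mathbb{D}$ containing $0$ and $1$, and then to observe that multiplicative cancellativity descends from $\mathbb{D}$ to any subsemiring. Since $\alpha = \alpha^{1} \in \mathbb{H}$ for every $\alpha \in \mathbb{H}$, we have $\mathbb{H} \subseteq T$; in particular $0,1 \in T$, so $T$ is an extension of $\mathbb{H}$.

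For closure under multiplication, take $a,b \in T$, say $a^{k} \in \mathbb{H}$ and $b^{m} \in \mathbb{H}$ with $k,m \in \mathbb{N}$. Then $(ab)^{km} = (a^{k})^{m}(b^{m})^{k} \in \mathbb{H}$ because $\mathbb{H}$ is multiplicatively closed, so $ab$ is $\mathbb{H}$-torsion; equivalently, $ab$ is divisibly dependent on the $\mathbb{H}$-torsion set $\{a,b\}$, to which Remark \ref{torrem2} (in its divisible-dependence form, cf. the displayed argument preceding this corollary) applies. For closure under addition, bipotency of $\mathbb{D}$ gives $a+b \in \{a,b\} \subseteq T$ immediately. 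Hence $T$ is a subsemiring of $\mathbb{D}$; as $\mathbb{D}$ is multiplicatively cancellative, so is its subsemiring $T$, and therefore $T$ is a sub-domain of $\mathbb{D}$.

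Finally, assume $\mathbb{D}$ is a semifield and let $0 \neq a \in T$, say $a^{k} = \alpha \in \mathbb{H}$ with $k \in \mathbb{N}$. Since $\mathbb{D}$ is a domain and $a \neq 0$, also $\alpha \neq 0$, so $\alpha^{-1} \in \mathbb{H}$. Then $a^{-1} = \alpha^{-1} a^{k-1}$ exists in $\mathbb{D}$ and $(a^{-1})^{k} = \alpha^{-1} \in \mathbb{H}$, so $a^{-1} \in T$; thus every nonzero element of $T$ is invertible in $T$, and $T$ is a semifield. I do not expect a genuine obstacle here: the argument is pure bookkeeping, the only point deserving care being that closure under multiplication uses the divisible-dependence strengthening of Remark \ref{torrem2} (or the one-line exponent computation above) rather than the binary linear-dependence notion, whereas closure under addition comes for free from bipotency.
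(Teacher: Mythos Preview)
Your proof is correct and follows essentially the same approach as the paper's embedded argument: closure under multiplication via divisible dependence on $\{a,b\}$ (or the equivalent direct exponent computation you give), and closure under addition via bipotency. You are more thorough than the paper in that you explicitly verify $\mathbb{H} \subseteq T$, note that cancellativity is inherited by subsemirings, and supply the inverse argument $a^{-1} = \alpha^{-1}a^{k-1}$ for the semifield claim, which the paper asserts but does not spell out.
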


\begin{cor}
Let $\mathbb{D}=\mathbb{H}[a_1,...,a_n]$ be the affine bipotent domain extending $\mathbb{H}$ generated by $a_1,...,a_n \in \mathbb{D}$ , then $\mathbb{D}$ is a $\mathbb{H}$-torsion iff $a_1,...,a_n$ are $\mathbb{H}$-torsion elements.
\end{cor}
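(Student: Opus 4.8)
The plan is to prove the two implications separately: the forward direction is immediate from the definition of ``$\mathbb{H}$-torsion'', and the backward direction follows at once from the fact (Corollary \ref{torcor2}) that the $\mathbb{H}$-torsion elements of $\mathbb{D}$ form a sub-domain.

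For the forward implication, if $\mathbb{D}$ is $\mathbb{H}$-torsion then, by definition, every element of $\mathbb{D}$ is $\mathbb{H}$-torsion; in particular, since $a_1,\dots,a_n \in \mathbb{D}$, each $a_i$ is $\mathbb{H}$-torsion. For the backward implication, suppose $a_1,\dots,a_n$ are all $\mathbb{H}$-torsion. By Corollary \ref{torcor2} the set $Tor_{\mathbb{H}}(\mathbb{D})$ of $\mathbb{H}$-torsion elements of $\mathbb{D}$ is a sub-domain of $\mathbb{D}$. It contains $\mathbb{H}$ (each $\alpha \in \mathbb{H}$ satisfies $\alpha^{1} = \alpha \in \mathbb{H}$, hence is $\mathbb{H}$-torsion) and, by hypothesis, it contains $a_1,\dots,a_n$. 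Therefore $Tor_{\mathbb{H}}(\mathbb{D})$ contains the sub-semiring of $\mathbb{D}$ generated by $\mathbb{H} \cup \{a_1,\dots,a_n\}$, which is exactly $\mathbb{D} = \mathbb{H}[a_1,\dots,a_n]$ (recall that in the bipotent affine setting, by the remark following Remark \ref{rem_affine_bipotent}, every element of $\mathbb{D}$ has the form $\alpha \prod_{i=1}^{n} a_i^{k_i}$ with $\alpha \in \mathbb{H}$ and $k_i \in \mathbb{N}$, so only the semiring operations are needed to build $\mathbb{D}$ from $\mathbb{H}$ and the $a_i$). Hence $\mathbb{D} = Tor_{\mathbb{H}}(\mathbb{D})$, i.e., $\mathbb{D}$ is $\mathbb{H}$-torsion.

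Alternatively one can argue directly: take arbitrary $b \in \mathbb{D}$, write $b = \alpha \prod_{i=1}^{n} a_i^{k_i}$ as above, note that this exhibits $b$ as divisibly dependent on $\{a_1,\dots,a_n\}$ over $\mathbb{H}$, and then invoke the least-common-multiple computation from the proof accompanying Corollary \ref{torcor1} (together with Remark \ref{torrem2}) to conclude that $b$ is $\mathbb{H}$-torsion; since $b$ was arbitrary, $\mathbb{D}$ is $\mathbb{H}$-torsion. I do not anticipate any genuine obstacle here: the only mild point requiring attention is to invoke the correct closure statement — that the $\mathbb{H}$-torsion elements are closed under the semiring operations — and to observe that this is enough precisely because $\mathbb{D}$ is generated over $\mathbb{H}$ by the $a_i$ using products and sums only; everything else is a direct citation of results established above.
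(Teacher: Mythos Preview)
Your proof is correct and follows essentially the same approach as the paper: the forward direction is immediate from the definition, and the backward direction is deduced from Corollary~\ref{torcor2} (torsion elements form a sub-domain containing the generators, hence all of $\mathbb{D}$). Your write-up is simply more explicit than the paper's two-line proof.
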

\begin{proof}
Since $a_1,...,a_n$ generate $D$, by Corollary \ref{torcor2}, $\mathbb{D}$ is $\mathbb{H}$-torsion. The inverse direction follows the definition of an $\mathbb{H}$-torsion bipotent domain.
\end{proof}

\begin{prop}\label{torprop2}
Let $\mathbb{D} = \mathbb{H}[a_1,...,a_n]$ be an affine bipotent domain extending a semifield $\mathbb{H}$.
If $\mathbb{D}$ is $\mathbb{H}$-torsion then $\mathbb{D}$ is a semifield.
\end{prop}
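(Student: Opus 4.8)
The plan is to show directly that every nonzero element of $\mathbb{D}$ is multiplicatively invertible; since $\mathbb{D}$ is already a semiring, this gives that $\mathbb{D}$ is a semifield. The affineness of $\mathbb{D}$ will not really be needed for this argument — what does the work is that $\mathbb{D}$ is a (bipotent) domain extending the semifield $\mathbb{H}$ and is $\mathbb{H}$-torsion.

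First I would record that, since the base $\mathbb{H}$ is a semifield, being $\mathbb{H}$-torsion collapses to the concrete statement used in Remark \ref{torrem1}: for each $d \in \mathbb{D}$ there is some $k = k(d) \in \mathbb{N}$ with $d^k \in \mathbb{H}$ (a product $\beta\prod \alpha_i^{k_i}$ with $\alpha_i,\beta \in \mathbb{H}$ and $k_i \in \mathbb{Z}$ is just an element of $\mathbb{H}$, because $\mathbb{H}^\ast$ is a group). Now fix $d \in \mathbb{D}$, $d \neq 0$, and choose such a $k \geq 1$. Because $\mathbb{D}$ is multiplicatively cancellative, $d \neq 0$ forces $d^k \neq 0$ (if $d^k = 0$ with $k$ minimal then $k \geq 2$ and $d\cdot d^{k-1} = d\cdot 0$ gives $d^{k-1} = 0$, contradicting minimality). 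Hence $d^k \in \mathbb{H}^{\ast} = \mathbb{H}\setminus\{0\}$, which is a group, so $(d^k)^{-1} \in \mathbb{H}^{\ast} \subseteq \mathbb{D}$. Setting $e = (d^k)^{-1} d^{\,k-1} \in \mathbb{D}$ we obtain $d e = (d^k)^{-1} d^k = 1$, so $d$ is invertible in $\mathbb{D}$. As $d$ was an arbitrary nonzero element, $\mathbb{D}$ is a semifield.

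An alternative, more structural route that stays closer to the development of this section is induction on the number $n$ of generators. By the corollary preceding this proposition, $\mathbb{D}$ being $\mathbb{H}$-torsion implies each $a_i$ is $\mathbb{H}$-torsion. For $n = 1$, Proposition \ref{torprop1} gives that $\mathbb{H}[a_1]$ is a bipotent semifield. For the step, set $\mathbb{K} = \mathbb{H}[a_1,\dots,a_{n-1}]$; by the inductive hypothesis $\mathbb{K}$ is a semifield, and $\mathbb{D} = \mathbb{K}[a_n]$ is a bipotent domain extending $\mathbb{K}$. Since $a_n^{k} \in \mathbb{H} \subseteq \mathbb{K}$ for some $k$, the element $a_n$ is $\mathbb{K}$-torsion, so Proposition \ref{torprop1} applied over the base $\mathbb{K}$ yields that $\mathbb{D} = \mathbb{K}[a_n]$ is a bipotent semifield.

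There is essentially no hard step in either version. The only points requiring care are: (i) noting that the abstract notion of ``$\mathbb{H}$-torsion'' reduces, over a semifield base, to ``some positive power lies in $\mathbb{H}$''; and (ii) ruling out the degenerate case $d^k = 0$, which is precisely where multiplicative cancellativity enters. For the inductive variant, the only thing to check at each stage is that the hypotheses of Proposition \ref{torprop1} hold — that $\mathbb{D}$ is a bipotent domain over the current base and that the newly adjoined generator is torsion over it — both of which are immediate.
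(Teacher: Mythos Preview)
Both routes you give are correct. Your second, inductive argument is exactly the paper's proof: the paper sets $\mathbb{H}_i = \mathbb{H}_{i-1}[a_i]$, observes each $a_i$ is $\mathbb{H}_{i-1}$-torsion because $\mathbb{H}\subset\mathbb{H}_{i-1}$, and applies Proposition~\ref{torprop1} at each stage to conclude $\mathbb{D}=\mathbb{H}_n$ is a semifield.

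Your first, direct argument is a genuinely different and more elementary route. It bypasses Proposition~\ref{torprop1} entirely and, as you note, does not use affineness at all: once every element has a power landing in $\mathbb{H}^\ast$, invertibility is immediate. The one thing the paper's inductive approach buys that your direct argument does not is the rank estimate $[\mathbb{D}:\mathbb{H}]\leq\prod_i\deg_{\mathbb{H}}(a_i)$, which the paper records at the end of its proof and invokes later (Remark~\ref{td_rem}); that bound comes naturally out of the tower $\mathbb{H}=\mathbb{H}_0\subset\cdots\subset\mathbb{H}_n=\mathbb{D}$ via Lemma~\ref{lem1}, and has no obvious analogue in the direct argument.
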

\begin{proof}
Assume $\mathbb{D}$ is $\mathbb{H}$-torsion, then by
definition, each $a_i$ is an $\mathbb{H}$-torsion element. Define
inductively $\mathbb{H}_i = \mathbb{H}[a_1, \dots a_i] =
\mathbb{H}_{i-1}[a_i]$ which is a semifield for each $i$ by
Proposition \ref{torprop1}, since $\mathbb{H}_{i-1}$ is a semifield by
induction (Indeed, $a_i$ is an $\mathbb{H}_{i-1}$-torsion since $\mathbb{H} \subset \mathbb{H}_{i-1}$).
In Particular, $\mathbb{D} = \mathbb{H}_n$ is a semi-field.
Furthermore, if $d_i = deg_{\mathbb{H}}(a_i)$ then $[\mathbb{H}_i \ : \ \mathbb{H}_{i-1}] =
deg_{\mathbb{H}_{i-1}}(a_i) \leq deg_{\mathbb{H}}(a_i) = d_i$
implying that $$ [ \mathbb{D} : \mathbb{H} ] =[ \mathbb{H}_n :
\mathbb{H}_{n-1} ][ \mathbb{H}_{n-1} : \mathbb{H}_{n-2}] \dots [
\mathbb{H}_1 : \mathbb{H}] \leq \prod_{i=1}^{n}d_i.$$
\end{proof}

\begin{thm}\label{mainthm}
Let $\mathbb{D}= \mathbb{H}[a_1,...,a_n]$ be an affine bipotent semifield extending $\mathbb{H}$.
Then for suitable elements $b_1,...,b_t,c_{t+1},...,c_m \in \mathbb{D}^{\ast}$,
\begin{equation}\label{maineq}
\mathbb{D} = \left(\mathbb{H}(b_1,...,b_t)\right)[c_{t+1},...,c_{m}]=\left(\mathbb{H}[c_{t+1},...,c_{m}]\right)(b_1,...,b_t)
\end{equation}
(and so, $\mathbb{H}(b_1,...,b_t)$ is a divisibly-free (pure transcendental) $\mathbb{H}$-semifield extension), and $\mathbb{H}[c_{t+1},...,c_{m}]$ is an $\mathbb{H}$-torsion semifield extension.
\end{thm}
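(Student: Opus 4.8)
The plan is to use the structural analysis of Remark \ref{construction1} together with Propositions \ref{prop1}, \ref{torprop2}, and the transitivity results. First I would apply Remark \ref{construction1} to the affine bipotent semifield $\mathbb{D} = \mathbb{H}[a_1,\dots,a_n]$: the quotient group $\tilde{\mathbb{D}^{\ast}} = \mathbb{D}^{\ast}/\mathbb{H}^{\ast}$ is a finitely generated abelian group $G$, which decomposes as $G = F \cdot T$ with $F = \prod_{i=1}^{t}\langle[b_i]\rangle$ free of rank $t$ and $T = \mathrm{Tor}(G) = \prod_{i=t+1}^{m}\langle[c_i]\rangle$ the torsion part, with the internal-independence relations listed there. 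I would then pick arbitrary representatives $b_1,\dots,b_t,c_{t+1},\dots,c_m \in \mathbb{D}^{\ast}$ of these cosets. By Proposition \ref{prop1}, the set $\{b_1,\dots,b_t\}$ is divisibly independent over $\mathbb{H}$ and each $c_i$ is $\mathbb{H}$-torsion.

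Next I would verify the two "halves". For the torsion half, set $\mathbb{K} = \mathbb{H}[c_{t+1},\dots,c_m]$. Since each $c_i$ is $\mathbb{H}$-torsion, Proposition \ref{torprop2} gives that $\mathbb{K}$ is a semifield, and it is $\mathbb{H}$-torsion by the corollary following Corollary \ref{torcor2}. For the free half, the subset $\{b_1,\dots,b_t\}$ being divisibly independent over $\mathbb{H}$ (hence also over $\mathbb{K}$ — here I need that divisible independence over $\mathbb{H}$ lifts to divisible independence over the torsion extension $\mathbb{K}$, which follows because a divisible dependence of the $b_j$ over $\mathbb{K}$ would, after raising to a suitable power killing the torsion coefficients from $\mathbb{K}$, produce one over $\mathbb{H}$) means that $\mathbb{H}(b_1,\dots,b_t)$, and likewise $\mathbb{K}(b_1,\dots,b_t)$, is a divisibly-free (pure transcendental) extension in the sense of the definition preceding Proposition \ref{prop1}.

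The core equality \eqref{maineq} I would establish at the level of the quotient group and then lift. Every nonzero $d \in \mathbb{D}$ has $[d] \in G = F\cdot T$, so $[d] = \prod_{j=1}^{t}[b_j]^{p_j}\prod_{i=t+1}^{m}[c_i]^{q_i}$ for integers $p_j$ and $q_i$; translating back, $d = \alpha\prod_{j}b_j^{p_j}\prod_{i}c_i^{q_i}$ for some $\alpha \in \mathbb{H}^{\ast}$, with the $b_j$-part possibly having negative exponents (legal in a semifield) and the $c_i$-part requiring only nonnegative exponents after reducing mod the torsion orders $n_i$. This exhibits $d$ as an element of $\big(\mathbb{H}(b_1,\dots,b_t)\big)[c_{t+1},\dots,c_m]$ and also of $\big(\mathbb{H}[c_{t+1},\dots,c_m]\big)(b_1,\dots,b_t)$; the reverse inclusions are immediate since all the $b_j, c_i$ lie in $\mathbb{D}$ and $\mathbb{D}$ is a semifield containing $\mathbb{H}$. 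Finally I would note $\mathbb{H}[c_{t+1},\dots,c_m]$ is an $\mathbb{H}$-torsion semifield extension as already recorded.

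I expect the main obstacle to be the bookkeeping in passing between the group-theoretic decomposition of $G$ and honest equalities in $\mathbb{D}$ — specifically, ensuring that representatives can be chosen so that the internal-independence conditions of Remark \ref{construction1} really do yield divisible independence of $\{b_1,\dots,b_t\}$ (this is exactly Proposition \ref{prop1}(1)--(2), so it is available), and checking that this independence persists over the torsion subfield $\mathbb{K}$ rather than only over $\mathbb{H}$. The rest is a matter of assembling Propositions \ref{prop1}, \ref{torprop2} and the torsion corollaries, with Remark \ref{rem_torsion_free} guaranteeing that the free generators genuinely contribute infinite-order cosets so that the labels "free" and "torsion" are not vacuous.
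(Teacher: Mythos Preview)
Your proposal is correct and follows essentially the same strategy as the paper: apply Remark \ref{construction1} to decompose $G=\mathbb{D}^{\ast}/\mathbb{H}^{\ast}$, lift representatives, invoke Proposition \ref{prop1} for divisible independence and torsion, and Proposition \ref{torprop2} to see that the torsion part is a semifield. The only tactical differences are that the paper verifies the equality \eqref{maineq} by checking that the original generators $a_1,\dots,a_n$ lie in the right-hand side (rather than decomposing an arbitrary $d$ via $G=F\cdot T$ as you do), and it obtains the divisible independence of $\{b_1,\dots,b_t\}$ over $\mathbb{E}=\mathbb{H}[c_{t+1},\dots,c_m]$ directly from Proposition \ref{prop1}(1) (independence from the full set including the $c_i$'s) rather than through your ``raise to a power to kill torsion coefficients'' argument---which is unnecessary, since any coefficient from $\mathbb{E}$ is already of the form $\beta\prod c_i^{r_i}$ with $\beta\in\mathbb{H}$.
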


\begin{proof}
First, since $b_1,...,b_t \in \mathbb{D}^{\ast}$, $b_1,...,b_t \in \mathbb{D}$ are invertible in $\mathbb{D}$. By Corollary \ref{cor1}, the semifield of fractions $\mathbb{K} = \mathbb{H}(b_1,...,b_t) \subset \mathbb{D}$. Now, $c_{t+1},...,c_m$ are divisibly dependent on $\mathbb{H}$, thus $\mathbb{H}[c_{t+1},...,c_m]$ is $\mathbb{H}$-torsion, and since $\mathbb{H} \subset \mathbb{K}$, $\mathbb{K}[c_{t+1},...,c_m] \subset \mathbb{D}$ is $\mathbb{K}$-torsion and thus, by Proposition \ref{torprop2}, a semifield over $\mathbb{K}$. Since the generating elements, $a_1,...,a_n$, of $\mathbb{D}$ over $\mathbb{H}$ are in $\mathbb{H}[b_1,...,b_t,c_{t+1},...,c_m] \subset \mathbb{K}[c_{t+1},...,c_m]$, it follows that $\mathbb{D} \subset \mathbb{K}[c_{t+1},...,c_m]$. This proves the left equality of equation \eqref{maineq}. For the second equality, let $\mathbb{E} = \mathbb{H}[c_{t+1},...,c_{m}]$. Since $\mathbb{E}$ is $\mathbb{H}$-torsion it is a semifield by Proposition \ref{torprop2}. By Proposition \ref{prop1}, $b_1,...,b_t$ are divisibly independent of $\{a_i  :  i~=~t~+~1~,...,m \}$ over $\mathbb{H}$. Thus $\{ b_1, ... ,b_t \}$ is divisibly independent over $\mathbb{E}$, implying  that $\mathbb{E}(b_1,...,b_t) \subset \mathbb{D}$ is divisibly-free over $\mathbb{E}$. The same argument used in the first part of the proof yields that $\mathbb{E}(b_1,...,b_t)~=~\mathbb{D}$.
\end{proof}

\begin{rem}\label{td_rem}
Using the notation of the theorem, the number of divisibly independent elements, $t$, does not depend on the
choice of these elements. Indeed, these are basis elements generating the free part of the direct sum decomposition of the finitely generated abelian group .
Thus their number is invariant under any choice of basis. \\

Note also that by Proposition \ref{torprop2} we have that $$[\mathbb{H}[c_{t+1},...,c_m] : \mathbb{H}] = [\mathbb{K}[c_{t+1},...,c_m] : \mathbb{K}]\leq \prod_{i=t+1}^{m} deg_{\mathbb{H}}(c_i).$$
Again, recalling the fundamental theorem on abelian groups, the number \linebreak $\prod_{i=t+1}^{m} deg_{\mathbb{H}}(c_i)$ does not depend of our choice of torsion elements $ \{ c_i \}$.
\end{rem}
\bigbreak

%

\ \\
\section{Extensions of cancellative semifields}\label{section:CancellativeExtensions}\ \\

\begin{note}
Let $\mathbb{H}$ be a semifield, and let $\mathbb{H}[x]$ be its corresponding polynomial semiring. In this section, when $\mathbb{H}[x]$ is taken as a denominator of a quotient semiring, $\mathbb{H}[x]$ is assumed not to include the zero polynomial.
\end{note}

\begin{defn}
Let $(\mathbb{H}, +, \cdot)$ be a semifield. $\mathbb{H}$ is said to be \emph{cancellative} if
\begin{equation*}
\forall  \ a,b,c \in \mathbb{H} \ \ a + c = b + c \Rightarrow a = b.
\end{equation*}
\end{defn}

\begin{note}
In the rest of this section, a cancellative semifield means cancellative with respect to addition. In case we refer to cancellation with respect to multiplication, we indicate it explicitly.
\end{note}

\begin{note}
The definition of affine extensions of a semifield introduced in section~\ref{section:SemifieldsPreliminaries}, also applies to cancellative semifields. Thus we omit stating them in this section.
\end{note}


Let $(\mathbb{H}, + ,\cdot)$ be a semiring. Then $\mathbb{H}$ is embeddable into a ring iff $(\mathbb{H},+)$ is commutative cancellative (\cite[Theorem (5.1)]{AlgTheAndApp}).
In particular, any commutative and cancellative semiring $\mathbb{H}$ has a well-known extension to a ring, called its `difference ring' to be \linebreak defined next.

\begin{defn}
Let $(\mathbb{H}, + ,\cdot)$ be a commutative cancellative semiring. Let $R$ be the ring consisting of all differences $a-b$ with $a,b \in \mathbb{H}$, subjected to some elementary rules of identification (cf. \cite[Chapter 2]{AlgTheAndApp}). This ring $R$ is uniquely determined (up to isomorphisms leaving $\mathbb{H}$ fixed) and is the minimal ring containing $\mathbb{H}$. $R$ is called the \emph{difference ring} or the \emph{ring of differences} of $\mathbb{S}$ and is denoted by $R = D(\mathbb{S})$.
\end{defn}

\begin{note}
From now on, we always assume a semiring to be commutative and \linebreak (additively) cancellative. Moreover, we require a semifield to consist of at least two elements.
\end{note}

\begin{lem}\cite{Hom_Semifields}
Let $\mathbb{S}$ and $\mathbb{A}$ be proper semifields and let $\Phi: \mathbb{S} \rightarrow \mathbb{A}$ be a semifield homomorphism.
Then $\Phi$ determines a congruence $\chi= \Phi^{-1} \circ \Phi$ on $\mathbb{S}$. The class $[1]_\chi$ corresponding to the
(multiplicative) identity element of  \ $\mathbb{S}$ is a kernel.
\end{lem}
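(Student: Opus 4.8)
The plan is to set $N := [1]_\chi = \Phi^{-1}(\Phi(1)) = \Phi^{-1}(1_{\mathbb{A}})$ and to check directly that $N$ satisfies the defining properties of a kernel of $\mathbb{S}$: that $N$ is a multiplicative subgroup of $\mathbb{S}^{\ast}$ and that it is closed under the ``convexity''/saturation operation
\[
 n_1, n_2 \in N,\ \ a,b \in \mathbb{S},\ \ a+b \neq 0 \ \Longrightarrow\ \frac{n_1 a + n_2 b}{a+b} \in N .
\]
That $\chi = \Phi^{-1}\circ\Phi$ is a congruence on $\mathbb{S}$ is the standard fact that the fibre relation of a homomorphism of algebraic structures is compatible with the operations; I would dispose of it in one line, the point being only that $\Phi$ respects $+$, $\cdot$ and (on nonzero elements) the inverse. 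All the real content is the assertion about $[1]_\chi$.

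First I would record the behaviour of $\Phi$ at $0$, which is what prevents the zero element from interfering with the rest of the argument. A semifield homomorphism preserves $0$ and $1$, so $\Phi(0)=0_{\mathbb{A}}$; since $\mathbb{A}$ is a semifield with at least two elements, $0_{\mathbb{A}}\neq 1_{\mathbb{A}}$, whence $0\notin N$. Moreover, if $\Phi(a)=0_{\mathbb{A}}$ for some $a\neq 0$, then $1_{\mathbb{A}}=\Phi(1)=\Phi(a)\Phi(a^{-1})=0_{\mathbb{A}}$, a contradiction; hence $\Phi^{-1}(0)=\{0\}$. For the subgroup part: $1\in N$; if $n_1,n_2\in N$ then $\Phi(n_1 n_2)=\Phi(n_1)\Phi(n_2)=1_{\mathbb{A}}$, so $n_1 n_2\in N$; and if $n\in N$ then $\Phi(n)=1_{\mathbb{A}}\neq 0_{\mathbb{A}}$, so $\Phi(n^{-1})=\Phi(n)^{-1}=1_{\mathbb{A}}$, giving $n^{-1}\in N$. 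Together with $0\notin N$ this shows $N$ is a subgroup of $(\mathbb{S}^{\ast},\cdot)$.

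For the saturation condition, fix $n_1,n_2\in N$ and $a,b\in\mathbb{S}$ with $a+b\neq 0$. I first claim $\Phi(a)+\Phi(b)\neq 0_{\mathbb{A}}$: otherwise, since the proper semifield $\mathbb{A}$ is zero-sum-free, $\Phi(a)=\Phi(b)=0_{\mathbb{A}}$, so by the previous paragraph $a=b=0$ and $a+b=0$, contrary to hypothesis. Consequently, computing in $\mathbb{A}$,
\[
\Phi\!\left(\frac{n_1 a + n_2 b}{a+b}\right) = \frac{\Phi(n_1)\Phi(a) + \Phi(n_2)\Phi(b)}{\Phi(a)+\Phi(b)} = \frac{\Phi(a)+\Phi(b)}{\Phi(a)+\Phi(b)} = 1_{\mathbb{A}},
\]
so $\frac{n_1 a + n_2 b}{a+b}\in N$, as required. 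If the notion of kernel in \cite{Hom_Semifields} additionally requires that $\chi$ be recoverable from $N$, I would append the short verification that for nonzero $x,y$ one has $x\,\chi\,y \iff \Phi(x)=\Phi(y) \iff \Phi(xy^{-1})=1_{\mathbb{A}} \iff xy^{-1}\in N$, while $0$ is $\chi$-related only to itself by $\Phi^{-1}(0)=\{0\}$.

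The only genuine obstacle here is bookkeeping rather than mathematics: one must (a) pin down the exact axiom list that ``kernel'' abbreviates in the cited source and match each axiom against $N$, and (b) treat the zero element carefully, since the saturation identity involves the quotient $\frac{n_1 a + n_2 b}{a+b}$ and therefore needs both $a+b\neq 0$ and $\Phi(a)+\Phi(b)\neq 0_{\mathbb{A}}$ — the latter being precisely where properness (zero-sum-freeness) of $\mathbb{A}$ is used, after $\Phi^{-1}(0)=\{0\}$ has been established. Everything else is a direct transport of semifield identities through the homomorphism $\Phi$.
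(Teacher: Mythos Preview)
Your argument is correct. Note, however, that the paper does not supply its own proof of this lemma: it is quoted verbatim from \cite{Hom_Semifields} and stated without proof, so there is nothing to compare against. Your direct verification---checking that $N=\Phi^{-1}(1_{\mathbb{A}})$ is a multiplicative subgroup of $\mathbb{S}^{\ast}$ and satisfies the saturation condition, using zero-sum-freeness of the proper semifield $\mathbb{A}$ together with $\Phi^{-1}(0)=\{0\}$ to ensure $\Phi(a)+\Phi(b)\neq 0_{\mathbb{A}}$---is exactly the standard route and is self-contained. Your caveat about matching the precise axiom list for ``kernel'' in the cited source is well taken, since the present paper never spells out that definition.
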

\ \\
\begin{prop}\label{cor6_2}\cite[Corollary (6.2)]{Hom_Semifields}
Let $\mathbb{S}$ be a cancellative proper semifield and let $R = D(\mathbb{S})$ be its difference ring. Then each ideal $A$ of $R$ defines a kernel $$\hat{K}~=~( 1 + A ) \cap \mathbb{H}$$ of \ $\mathbb{S}$ such that $\mathbb{S}/{\hat{K}}$ is cancellative, and each kernel  $K$ of \ $\mathbb{S}$ defines an ideal $$\hat{A} = (K - 1)\mathbb{S}$$ of $R$. Moreover $A \rightarrow \hat{K}$ defines an isomorphism of the lattice of all ideals of $R$ onto the lattice of all the kernels $K$ of \ $\mathbb{S}$ such that $\mathbb{S}/{K}$ is cancellative.\\
In particular, $R=D(\mathbb{S})$ is a simple ring (hence a field, in the case where $(\mathbb{S}, \cdot)$ is commutative) if and only if $\mathbb{S}$ has no kernels $K$ except $K = \{ 1 \}$ and $K = \mathbb{S}$.
\end{prop}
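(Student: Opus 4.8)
The plan is to realise both lattices inside the difference ring $R=D(\mathbb{S})$, exploiting that $\mathbb{S}$ sits in $R$ as a set of units. Two preliminary facts do most of the work. First, since $\mathbb{S}$ is a semifield, every $x\in\mathbb{S}$ is a unit of $R$, so any element of $R$ — necessarily a difference $x-y$ of elements of $\mathbb{S}$ — can be rewritten as $y\,(xy^{-1}-1)$ with $xy^{-1}\in\mathbb{S}$; hence $R$ is spanned additively by $\{\,s(t-1):s,t\in\mathbb{S}\,\}$, and every ideal of $R$ is generated by elements of the form $s-1$ with $s\in\mathbb{S}$. Second, a congruence $\chi$ on a semifield is determined by its class of $1$, via $s\mathbin{\chi}t\iff st^{-1}\in[1]_\chi$ (immediate from multiplicativity of $\chi$ and invertibility in $\mathbb{S}$), and by the Lemma preceding the statement that class is a kernel; so fixing a kernel is the same as fixing a congruence, hence a quotient semifield.

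From an ideal $A\trianglelefteq R$ I would form the relation $\chi_A$ on $\mathbb{S}$ by $s\mathbin{\chi_A}t\iff s-t\in A$. Compatibility with $+$ is clear, and compatibility with $\cdot$ uses that $A$ absorbs multiplication by $\mathbb{S}\subset R$; so $\chi_A$ is a congruence, $s\mapsto s+A$ realises $\mathbb{S}/\chi_A$ as a subsemiring of the ring $R/A$, and a subsemiring of a ring is additively cancellative. The class of $1$ under $\chi_A$ is $\hat K=(1+A)\cap\mathbb{S}$, so $\hat K$ is a kernel and $\mathbb{S}/\hat K=\mathbb{S}/\chi_A$ is cancellative. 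Conversely, from a kernel $K$ with $\mathbb{S}/K$ cancellative I would form $\hat A=(K-1)\mathbb{S}$, the additive span of $\{\,(k-1)s:k\in K,\ s\in\mathbb{S}\,\}$; the factorisation above shows this set absorbs multiplication by any $x-y\in R$, so it is an ideal.

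Next I would verify that $\Phi\colon A\mapsto(1+A)\cap\mathbb{S}$ and $\Psi\colon K\mapsto(K-1)\mathbb{S}$ are mutually inverse. For $\Psi\Phi=\mathrm{id}$: the inclusion $\Psi(\Phi(A))\subseteq A$ is immediate, and for the reverse one takes $a=x-y=y(xy^{-1}-1)\in A$, observes $xy^{-1}-1=y^{-1}a\in A$ so $xy^{-1}\in\Phi(A)$, hence $a=(xy^{-1}-1)y\in\Psi(\Phi(A))$. For $\Phi\Psi=\mathrm{id}$: the inclusion $K\subseteq\Phi(\Psi(K))$ is immediate (take $s=1$); for the reverse inclusion — the only place the hypothesis on $\mathbb{S}/K$ is used — I would note that $\mathbb{S}/K$, being cancellative, embeds in its difference ring $D(\mathbb{S}/K)$, so the composite $\mathbb{S}\twoheadrightarrow\mathbb{S}/K\hookrightarrow D(\mathbb{S}/K)$ extends, by the universal property of $R=D(\mathbb{S})$, to a ring homomorphism $\psi\colon R\to D(\mathbb{S}/K)$; since $\psi$ kills every $k-1$ with $k\in K$, we get $\Psi(K)\subseteq\ker\psi$, so $s-1\in\Psi(K)$ forces $[s]=[1]$ in $\mathbb{S}/K$, i.e.\ $s\in K$. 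Both $\Phi$ and $\Psi$ visibly preserve inclusions, and a bijection of posets whose inverse is order-preserving is a lattice isomorphism; in particular the kernels with cancellative quotient form a sublattice. For the final clause, $\Phi(\{0\})=\{1\}$ and $\Phi(R)=\mathbb{S}$, while $\mathbb{S}/\{1\}\cong\mathbb{S}$ and the trivial quotient $\mathbb{S}/\mathbb{S}$ are both cancellative, so $R$ has only the ideals $\{0\}$ and $R$ — i.e.\ is simple — exactly when $\{1\}$ and $\mathbb{S}$ are the only kernels of $\mathbb{S}$ with cancellative quotient; and when $(\mathbb{S},\cdot)$ is commutative a commutative simple ring with $1$ is a field.

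The main obstacle is the reverse inclusion in $\Phi\Psi=\mathrm{id}$: descending an identity $s-1=\sum_i(k_i-1)s_i$ that holds in $R$ to the conclusion $s\in K$ inside $\mathbb{S}$. The efficient route is not to manipulate this sum but to push it through $\psi\colon R\to D(\mathbb{S}/K)$, and this is precisely the step where cancellativity of $\mathbb{S}/K$ is indispensable, since it is what makes $\mathbb{S}/K\hookrightarrow D(\mathbb{S}/K)$ injective. Everything else reduces to bookkeeping with the factorisation $x-y=y(xy^{-1}-1)$.
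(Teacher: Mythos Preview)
The paper does not supply its own proof of this proposition: it is quoted verbatim from \cite[Corollary (6.2)]{Hom_Semifields} and used as a black box. So there is no ``paper's approach'' to compare against; what matters is whether your argument stands on its own.

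Your proof of the lattice isomorphism is correct and well organised. The factorisation $x-y=y(xy^{-1}-1)$ is exactly the right device for showing $\Psi\Phi=\mathrm{id}$, and invoking the universal property of $D(\mathbb{S})$ to produce $\psi\colon R\to D(\mathbb{S}/K)$ is the clean way to handle the reverse inclusion in $\Phi\Psi=\mathrm{id}$; you are right that this is the unique place where cancellativity of $\mathbb{S}/K$ enters. One small point worth making explicit: when you form the additive span $(K-1)\mathbb{S}$ you should note that it is already closed under negation, since $-(k-1)s=(k^{-1}-1)(ks)$ and $k^{-1}\in K$ because a kernel is a multiplicative subgroup. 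This guarantees $(K-1)\mathbb{S}$ is genuinely an additive subgroup of $R$, not just a subsemigroup.

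For the final clause you prove exactly what the lattice isomorphism yields: $R$ is simple if and only if $\{1\}$ and $\mathbb{S}$ are the only kernels \emph{with cancellative quotient}. The proposition as stated drops that qualifier. One direction is fine (no nontrivial kernels at all $\Rightarrow$ none with cancellative quotient $\Rightarrow$ $R$ simple). For the converse you can close the gap as follows: if $R$ is simple and $K$ is \emph{any} kernel, apply $\Psi$; if $\Psi(K)=\{0\}$ then $k-1=0$ for every $k\in K$, so $K=\{1\}$; if $\Psi(K)=R$ then in particular some finite identity $\sum_i k_is_i=\sum_i s_i'$ with $k_i\in K$, $s_i,s_i'\in\mathbb{S}$ encodes $-1\in(K-1)\mathbb{S}$, and pushing such an identity through the congruence associated to $K$ collapses $\mathbb{S}/K$. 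Depending on the precise axioms adopted for ``kernel'' in \cite{Hom_Semifields} this last step may require a short additional argument, so it is worth flagging rather than asserting.
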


\begin{defn}
Let $\mathbb{S}$ be  a commutative cancellative semifield extending  a cancellative semifield $\mathbb{H}$. An element $a \in \mathbb{S}$ is said to be \emph{algebraic} over $\mathbb{H}$ if there exists a non-zero polynomial $g(x) \in D(\mathbb{H})[x]$ such that $g(a) = 0$. Otherwise $a \in \mathbb{S}$ is said to be \emph{transcendental} over $\mathbb{H}$. Finally, $\mathbb{S}$ is said to be \emph{algebraic} over $\mathbb{H}$ if each element $a \in \mathbb{S}$ is algebraic over $\mathbb{H}$.
\end{defn}

\begin{lem}\label{field_lemma}\cite[Lemma (6.7)]{Hom_Semifields}
Let $\mathbb{S}$ be a semifield extension of a proper semifield $\mathbb{H}$. Then $\mathbb{S}$ is a field if and only if \ $\mathbb{S}$ contains an element $a \neq 0$ which is a zero of a polynomial $h(x) \in \mathbb{H}[x]$.
\end{lem}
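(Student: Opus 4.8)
The plan is to prove both directions, exploiting the kernel–ideal correspondence of Proposition \ref{cor6_2} together with the difference-ring functor $D(-)$. For the easy direction, suppose $\mathbb{S}$ is a field. Then every nonzero element of $\mathbb{S}$ has an additive inverse; in particular, picking any $a \in \mathbb{S}$ with $a \neq 0,1$ (such $a$ exists since $\mathbb{S}$ is proper and has at least two elements), the element $1 + a$ lies in $\mathbb{S}$ and has an additive inverse, so there is $b \in \mathbb{S}$ with $(1+a) + b = $ the additive identity, i.e. $b = -1 - a$ inside the field $\mathbb{S}$. I would then cook up an explicit polynomial $h(x) \in \mathbb{H}[x]$ vanishing at a suitable nonzero element — for instance, since $-1 \in \mathbb{S}$, the element $a = -1$ is a nonzero zero of $h(x) = x + 1 \in \mathbb{H}[x]$; one just needs to check $-1 \neq 0$, which holds because $\mathbb{S}$ has more than one element and $\mathbb{S}$ is cancellative (if $-1 = 0$ then $1 + 1 = 1$, forcing $1 = 0$ by cancellation, contradiction). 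So the forward direction reduces to: a field that is a proper semifield contains $-1 \neq 0$, which is a zero of $x+1 \in \mathbb{H}[x]$.

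For the substantive direction, assume $\mathbb{S}$ contains $a \neq 0$ with $h(a) = 0$ for some $h(x) = \sum_{i} h_i x^i \in \mathbb{H}[x]$. Since $\mathbb{H}[x]$ has no zero polynomial by our standing convention, $h$ has at least one nonzero coefficient; write the equation $h(a) = 0$ as $\sum_{i \in I} h_i a^i = 0$ with all $h_i \neq 0$, and split $I$ according to... no — better, the point is simply that $h(a) = 0$ is a genuine additive relation in $\mathbb{S}$ of the form $p = q$ where $p = \sum_{i \in I_1} h_i a^i$ and $q$ is absent, i.e. a sum of nonzero elements of $\mathbb{S}$ equals $0$. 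I would use this to produce a nontrivial kernel of $\mathbb{S}$, or rather to show directly that the difference ring $R = D(\mathbb{S})$ forces $\mathbb{S}$ to already contain additive inverses. Concretely: in $R = D(\mathbb{S})$ the relation $h(a) = 0$ holds, and since $a \neq 0$ is a unit in the semifield $\mathbb{S} \subseteq R$, the lowest-degree term can be divided out, giving a relation showing some nonzero element of $\mathbb{S}$ equals a negative of a sum of elements of $\mathbb{S}$; hence that negative already lies in $\mathbb{S}$. The cleanest route is: deduce from $h(a)=0$ that $-1 \in \mathbb{S}$ (solve for a term: if $h = h_0 + \sum_{i\geq 1} h_i x^i$ with $h_0 \neq 0$, then $h_0 = -\sum_{i \geq 1} h_i a^i$ in $R$, and if instead $h_0 = 0$ factor $a^k$ out using invertibility of $a$ to reduce to the previous case; either way one exhibits an element of $\mathbb{S}$ whose additive inverse is a sum of elements of $\mathbb{S}$, hence in $\mathbb{S}$). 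Once $-1 \in \mathbb{S}$, every $s \in \mathbb{S}$ has additive inverse $(-1)\cdot s \in \mathbb{S}$, so $\mathbb{S}$ is closed under subtraction; being already an additively cancellative commutative semiring with a multiplicative group structure on nonzero elements, $\mathbb{S}$ is then a field, and in fact $\mathbb{S} = D(\mathbb{S})$.

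The main obstacle is the bookkeeping in the non-field-to-field direction: handling the case where the constant term of $h$ vanishes (so one must factor out the appropriate power of the unit $a$ to reach a relation with nonzero constant term), and making sure that when I "solve for a term" the resulting identity genuinely takes place inside $\mathbb{S}$ rather than merely inside $D(\mathbb{S})$ — this is exactly the place where invertibility of $a$ in the semifield $\mathbb{S}$, and not just in $D(\mathbb{S})$, is essential. I expect the argument to be short once this is set up correctly, and I would lean on Proposition \ref{cor6_2} (the ideal–kernel correspondence) only if the direct computation turns out to need the fact that $\mathbb{S}$ has no proper cancellative quotients; the more elementary "produce $-1$" argument should suffice and is what I would present.
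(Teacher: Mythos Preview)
The paper does not supply its own proof of this lemma: it is quoted verbatim as \cite[Lemma (6.7)]{Hom_Semifields} and left unproved. So there is nothing to compare your approach against in the present paper.

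That said, your plan is essentially correct and is the standard argument. A couple of small clean-ups: in the forward direction your parenthetical ``since $\mathbb{S}$ is proper'' is a slip (in that direction $\mathbb{S}$ is a field, hence not proper); you immediately recover by taking $a=-1$ and $h(x)=x+1$, which is the right move. In the reverse direction, after factoring out the largest power of the unit $a$ so that the constant term $h_0$ is nonzero, you should note explicitly that the remaining polynomial cannot be the single monomial $h_0$: otherwise $h_0=0$ in $\mathbb{S}$, contradicting $h_0\in\mathbb{H}^\ast$. Hence there is at least one higher term, the identity $h_0+\sum_{i\geq 1}h_i a^i=0$ genuinely lives in $\mathbb{S}$ (not merely in $D(\mathbb{S})$), and multiplying by $h_0^{-1}\in\mathbb{S}$ yields $-1\in\mathbb{S}$; from there $\mathbb{S}$ is closed under additive inverses and is a field. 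You do not need Proposition~\ref{cor6_2} at all; the elementary ``produce $-1$'' argument you outline is exactly what is wanted.
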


\begin{rem}
Note that the definition of algebraic element requires $a \in \mathbb{S}$ to annihilate a polynomial in $D(\mathbb{H})[x]$, not necessarily in $\mathbb{H}[x]$.
\end{rem}

\begin{note}\label{note1}
In view of this lemma, we assume from now on that every algebraic element annihilates a polynomial in $D(\mathbb{H})[x] \setminus \mathbb{H}(x)$, for otherwise, the extension forms a field rather than a proper semifield, and thus has no interest for our purposes.
\end{note}

\begin{thm}\cite[Satz 1]{Satz}
Let $\mathbb{S}$ be an algebraic semifield extension of \ $\mathbb{H}$ and assume that $D(\mathbb{H})$ is a field. Then $\mathbb{S}$ is embeddable into a field if and only if $D(\mathbb{S})$ is a field.
\end{thm}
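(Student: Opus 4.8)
The plan is to prove the two implications separately; the reverse direction is immediate and essentially all the content lies in the forward direction. For the reverse implication, suppose $D(\mathbb{S})$ is a field. Since $\mathbb{S}$ is commutative and additively cancellative, $\mathbb{S}$ embeds into its difference ring $D(\mathbb{S})$ by the defining property of the latter, so $\mathbb{S}$ is embeddable into a field and there is nothing more to prove. For the forward implication, assume $\mathbb{S}$ embeds into a field $F$ and identify $\mathbb{S}$ with its image, so $\mathbb{S} \subseteq F$. By the universal property of the difference ring, the inclusion $\mathbb{S} \hookrightarrow F$ extends to a ring homomorphism $\psi \colon D(\mathbb{S}) \to F$ sending a formal difference $a-b$ to $a-b \in F$; since $\psi(a-b) = 0$ forces $a = b$ in $\mathbb{S}$ and hence $a-b = 0$ in $D(\mathbb{S})$, the map $\psi$ is injective. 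Thus $D(\mathbb{S})$ is isomorphic to a subring of the field $F$, in particular an integral domain; it remains to upgrade ``domain'' to ``field'', and for this we use the hypotheses on $\mathbb{H}$.

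First, the chain $\mathbb{H} \subseteq \mathbb{S} \subseteq D(\mathbb{S})$ induces a ring homomorphism $D(\mathbb{H}) \to D(\mathbb{S})$ whose kernel is an ideal of the field $D(\mathbb{H})$; this kernel does not contain $1$ (as $D(\mathbb{S}) \neq 0$, $\mathbb{S}$ having at least two elements), hence is $0$, and we may view $D(\mathbb{H})$ as a subfield of $D(\mathbb{S})$. By the definition of algebraicity, every $a \in \mathbb{S}$ annihilates a nonzero polynomial in $D(\mathbb{H})[x]$, i.e. is algebraic over the field $D(\mathbb{H})$. Because $D(\mathbb{H})$ is a field, the set of elements of $D(\mathbb{S})$ algebraic over $D(\mathbb{H})$ is a subring, by the standard finite-dimensionality argument: if $\alpha,\beta$ are algebraic then $D(\mathbb{H})[\alpha,\beta]$ is a finite-dimensional $D(\mathbb{H})$-vector space, so every element of it, in particular $\alpha\pm\beta$ and $\alpha\beta$, is again algebraic. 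Since $D(\mathbb{S}) = \{\, a - b : a,b \in \mathbb{S} \,\}$ is generated as a ring by $\mathbb{S}$, it follows that $D(\mathbb{S})$ is algebraic over $D(\mathbb{H})$.

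Finally I invoke the classical fact that an integral domain $R$ containing a field $k$ and algebraic over $k$ is itself a field: for $0 \neq d \in R$ the ring $k[d]$ is a finite-dimensional $k$-algebra and a domain, hence a field, so $d^{-1} \in k[d] \subseteq R$. Applying this with $R = D(\mathbb{S})$ and $k = D(\mathbb{H})$ shows $D(\mathbb{S})$ is a field, which completes the proof. The main obstacle is the bookkeeping in the forward direction: one must produce the embedding $D(\mathbb{S}) \hookrightarrow F$ from $\mathbb{S} \hookrightarrow F$ via the universal property and check its injectivity, identify $D(\mathbb{H})$ with a subfield of $D(\mathbb{S})$, and verify that algebraicity of the generating set $\mathbb{S}$ over the field $D(\mathbb{H})$ propagates to all of $D(\mathbb{S})$; once these are in place, the passage from domain to field is routine commutative algebra. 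An alternative to the last two paragraphs would be to deduce simplicity of $D(\mathbb{S})$ from the kernel--ideal correspondence of Proposition \ref{cor6_2}, but the direct route above seems cleaner.
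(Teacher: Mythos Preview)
Your argument is correct. The reverse implication is indeed trivial, and in the forward direction the three ingredients you isolate --- injectivity of $D(\mathbb{S})\to F$ from the universal property, propagation of algebraicity from $\mathbb{S}$ to $D(\mathbb{S})$ via the finite-dimensionality trick over the field $D(\mathbb{H})$, and the standard fact that an algebraic domain over a field is a field --- combine to give exactly what is needed. One small point worth making explicit: when you argue that $k[\alpha,\beta]$ is finite-dimensional you are tacitly using that over a field one may take the annihilating polynomials to be monic, so that $\alpha^n$ lies in the span of lower powers; this is fine precisely because $D(\mathbb{H})$ is assumed to be a field.

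As for comparison with the paper: there is nothing to compare. The paper does not prove this theorem; it merely quotes it from the external reference \cite[Satz~1]{Satz} and uses it as a black box. Your write-up therefore supplies a self-contained proof where the paper offers none, and the route you take (embed $D(\mathbb{S})$ in $F$ to get a domain, then use algebraicity over the field $D(\mathbb{H})$ to upgrade to a field) is the natural one. The alternative you mention via Proposition~\ref{cor6_2} would also work but is less direct, as you note.
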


\begin{rem}
Let $\mathbb{H}$ be a commutative cancellative semifield and let $\mathbb{S}$ be a commutative cancellative semifield which is a simple semifield extension of $\mathbb{H}$. Then, by Definition \ref{defn_simple_semifield_extension},
there exists an element $d \in \mathbb{S}$ that generates $\mathbb{S}$ as a semifield over $\mathbb{H}$, i.e. $$\mathbb{S} = \left\{ \frac{f(d)}{g(d)} \ : \ f,g \in \mathbb{H}[x], \ g \neq 0 \right\}.$$
Note that by our previous assumption on algebraic elements stated in Note \ref{note1}, we have that $g(d)=0 \Leftrightarrow g=0$ for $g \in \mathbb{H}[x]$.
\end{rem}

\begin{lem}\cite[Lemma (7.1)]{Hom_Semifields}
For each commutative cancellative proper semifield $\mathbb{H}$ there exists, up to isomorphism, a unique simple proper semifield extension
$$\mathbb{S}= \mathbb{H}(x)= \{\frac{f(x)}{g(x)} \ : \ f,g \in \mathbb{H}[x], \ g \neq 0 \}$$
such that $x$ is transcendental over $D(\mathbb{H})$. Here $\mathbb{H}(x)$ is the  semifield of quotients of the polynomial semiring over $\mathbb{H}$ in $x$.
\end{lem}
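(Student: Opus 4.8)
The plan is to obtain existence by an explicit construction of $\mathbb{H}(x)$ as a semifield of fractions, and uniqueness by a substitution (evaluation) argument.

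For existence, I would first record that the polynomial semiring $\mathbb{H}[x]$ is commutative, (additively) cancellative, and multiplicatively cancellative, i.e.\ a domain: additive cancellation is inherited coefficientwise from $\mathbb{H}$, and a product of nonzero polynomials is nonzero by the usual leading-coefficient argument, using that $\mathbb{H}$ (being a semifield) is multiplicatively cancellative. Hence $\mathbb{S}:=Frac(\mathbb{H}[x])=\{f(x)/g(x):f,g\in\mathbb{H}[x],\ g\neq0\}$ is a semifield; it is commutative, and bringing two fractions to a common denominator shows it is still additively cancellative. It is generated as a semifield over $\mathbb{H}$ by $x$, so it is a simple extension. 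To see that $x$ is transcendental over $D(\mathbb{H})$, suppose $g\in D(\mathbb{H})[x]$ satisfies $g(x)=0$; writing $g=g_{+}-g_{-}$ with $g_{\pm}\in\mathbb{H}[x]$ gives $g_{+}(x)=g_{-}(x)$ in $\mathbb{H}(x)$, and since $\mathbb{H}[x]\hookrightarrow\mathbb{H}(x)$ is injective this forces $g_{+}=g_{-}$, i.e.\ $g=0$. Finally, $\mathbb{S}$ is proper: since $\mathbb{H}$ is a proper semifield it is zerosumfree (an identity $a+b=0$ with $a\neq0$ would, after multiplying by $a^{-1}$, give an additive inverse of $1$ and make $\mathbb{H}$ a field), hence so is $\mathbb{H}[x]$; if some nonzero $a=f(x)/g(x)$ were a zero of $h=\sum_i h_i x^i\in\mathbb{H}[x]$, clearing denominators yields $\sum_i h_i f^i g^{\deg h-i}=0$ in the zerosumfree domain $\mathbb{H}[x]$, which forces every summand, hence every $h_i$, to vanish, so $h=0$. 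Thus no nonzero element of $\mathbb{S}$ annihilates a polynomial over $\mathbb{H}$, and $\mathbb{S}$ is not a field by Lemma \ref{field_lemma}.

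For uniqueness, let $\mathbb{S}'$ be any simple proper semifield extension of $\mathbb{H}$ generated by an element $d$ that is transcendental over $D(\mathbb{H})$, and consider the substitution homomorphism $\varphi\colon\mathbb{H}[x]\to\mathbb{S}'$, $f\mapsto f(d)$. It is injective: if $f(d)=g(d)$ with $f\neq g$, then inside $D(\mathbb{S}')$ --- which contains $\mathbb{S}'$ (as $\mathbb{S}'$ is additively cancellative) and contains $D(\mathbb{H})$ as a subring --- the nonzero polynomial $f-g\in D(\mathbb{H})[x]$ is annihilated by $d$, contradicting transcendence. Since $\mathbb{S}'$ is a semifield containing $\mathbb{H}[d]=\varphi(\mathbb{H}[x])$ and generated over $\mathbb{H}$ by $d$, it equals $\{f(d)/g(d):f,g\in\mathbb{H}[x],\ g\neq0\}$, where $g(d)\neq0$ precisely when $g\neq0$ by injectivity of $\varphi$. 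Therefore $\varphi$ extends to a well-defined map $\tilde\varphi\colon\mathbb{H}(x)\to\mathbb{S}'$, $f/g\mapsto f(d)/g(d)$, which is a surjective semifield homomorphism fixing $\mathbb{H}$ pointwise; it is injective because $\varphi$ is (from $f_1(d)g_2(d)=f_2(d)g_1(d)$ one recovers $f_1 g_2=f_2 g_1$), so $\tilde\varphi$ is the required $\mathbb{H}$-isomorphism.

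The routine parts are checking that $\mathbb{H}[x]$ is a cancellative domain and that forming $Frac$ preserves additive cancellation. The two delicate points are: (i) converting the transcendence hypothesis --- which is phrased over the \emph{ring} $D(\mathbb{H})$, not over $\mathbb{H}$ --- into injectivity of substitution, which forces one to pass to $D(\mathbb{S})$ and to use $\mathbb{H}[x]\hookrightarrow D(\mathbb{H})[x]$; and (ii) deducing properness of $\mathbb{S}$ from zerosumfreeness together with Lemma \ref{field_lemma}. I expect (i) to require the most care, since it is the one place where the choice of $D(\mathbb{H})$ rather than $\mathbb{H}$ as the base actually matters.
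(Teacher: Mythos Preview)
The paper does not supply its own proof of this lemma; it is quoted verbatim from \cite[Lemma (7.1)]{Hom_Semifields} and used as a black box. So there is nothing in the paper to compare your argument against directly.

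That said, your proposal is a sound and standard proof. The existence part correctly builds $\mathbb{H}(x)$ as the semifield of fractions of the cancellative domain $\mathbb{H}[x]$, verifies transcendence of $x$ by splitting a relation $g(x)=0$ into $g_+=g_-$ inside $\mathbb{H}[x]$, and deduces properness from zerosumfreeness together with Lemma~\ref{field_lemma}. The uniqueness part via the substitution homomorphism $f/g\mapsto f(d)/g(d)$ is exactly the expected argument, and your observation that transcendence over $D(\mathbb{H})$ (not merely over $\mathbb{H}$) is what makes $\varphi$ injective is the right emphasis. One minor comment: in your properness argument you implicitly assume $\mathbb{H}$ has an absorbing zero so that ``zerosumfree'' is meaningful; this is consistent with how the paper and \cite{Hom_Semifields} treat semifields, but it would be worth stating once.
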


\begin{prop}\label{prop_cancelative_ext_stracture}\cite[Chapter 9]{Cancellative1}
\begin{enumerate}
  \item The difference ring of $\mathbb{H}(x)$ is $$D(\mathbb{H}(x))= \frac{D(\mathbb{H})[x]}{\mathbb{H}[x]} = D(\mathbb{H})[x]_{\mathbb{H}[x]}$$ where $\frac{D(\mathbb{H})[x]}{\mathbb{H}[x]}$ is the set of all quotients of the form $\frac{g(x)}{h(x)}$ with $g(x) \in D(\mathbb{H})[x]$ and $0 \neq h(x) \in \mathbb{H}[x]$ while $D(\mathbb{H})[x]_{\mathbb{H}[x]}$ is the localization of the ring $D(\mathbb{H})[x]$ by the
      multiplicative set $\mathbb{H}[x]$. 
  \item $\mathbb{H}(x)$ is embeddable into a field iff $D(\mathbb{H}(x))$ has no zero divisors which holds iff $D(\mathbb{H})$ has no zero divisors, which in turn holds iff \ $\mathbb{H}$ is embeddable into a field.
  \item $\mathbb{H}(x)$ is universal for all simple proper semifield extensions of $\mathbb{H}$ in the sense that for any $d \in \mathbb{S}$ extending $\mathbb{H}$, $\mathbb{H}(d) \cong \mathbb{H}(x)/{K}$ for at least one kernel $K$ of $\mathbb{H}(x)$ such that $K \cap \mathbb{H} = \{ 1 \}$.
      The last semiring isomorphism is defined by the map $\psi : \mathbb{H}(x)/{K} \rightarrow \mathbb{H}(d)$ sending $xK \mapsto d$ and fixing $\mathbb{H}$.
\end{enumerate}
\end{prop}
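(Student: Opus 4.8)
The plan is to treat the three parts in order, leaning throughout on two facts: $D(-)$ produces the \emph{minimal} ring containing a commutative cancellative semiring, and localization is characterised by its universal property. For part (1) I would first record the auxiliary identity $D(\mathbb{H}[x]) = D(\mathbb{H})[x]$: the right-hand side is a ring containing $\mathbb{H}[x]$, and any ring $R \supseteq \mathbb{H}[x]$ contains every difference of elements of $\mathbb{H}$, i.e.\ a copy of $D(\mathbb{H})$, together with $x$, hence $D(\mathbb{H})[x]$; minimality gives equality, and in particular every element of $D(\mathbb{H})[x]$ is a difference of two elements of $\mathbb{H}[x]$. Now $D(\mathbb{H}(x))$ contains $\mathbb{H}(x)\supseteq\mathbb{H}$ and $x$, hence contains $D(\mathbb{H})[x]$ as a subring, and the nonzero elements of $\mathbb{H}[x]$ are already invertible there (they are invertible in the semifield $\mathbb{H}(x)$). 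The universal property of localization then gives a ring homomorphism $D(\mathbb{H})[x]_{\mathbb{H}[x]}\to D(\mathbb{H}(x))$; it is surjective since a typical element $f_1/h_1-f_2/h_2 = (f_1h_2-f_2h_1)/(h_1h_2)$ of $D(\mathbb{H}(x))$ has numerator in $D(\mathbb{H})[x]$ and denominator in $\mathbb{H}[x]$, and it is injective since $g/h\mapsto 0$ forces $g=0$ in $D(\mathbb{H})[x]$ (multiply by the invertible $h$), whence $g/h=0$ already in the localization. This proves $D(\mathbb{H}(x))=D(\mathbb{H})[x]_{\mathbb{H}[x]}$ and simultaneously identifies its elements with the quotients $g(x)/h(x)$, $g\in D(\mathbb{H})[x]$, $0\neq h\in\mathbb{H}[x]$.

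For part (2) I would invoke the elementary equivalence: a commutative cancellative semiring $\mathbb{S}$ embeds into a field iff $D(\mathbb{S})$ does iff $D(\mathbb{S})$ has no zero divisors (the first because an embedding $\mathbb{S}\hookrightarrow F$ extends to $D(\mathbb{S})\hookrightarrow F$ by minimality of $D(\mathbb{S})$, the second because a commutative domain embeds into its fraction field). Applied with $\mathbb{S}=\mathbb{H}(x)$ this is the first equivalence of (2); what remains is that $D(\mathbb{H})[x]_{\mathbb{H}[x]}$ has no zero divisors iff $D(\mathbb{H})$ has none. The direction $(\Leftarrow)$ is immediate: if $D(\mathbb{H})$ is a domain so is $D(\mathbb{H})[x]$, and a localization of a domain at a multiplicative set missing $0$ is again a domain. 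For $(\Rightarrow)$ I would argue by contraposition, showing $D(\mathbb{H})\hookrightarrow D(\mathbb{H})[x]_{\mathbb{H}[x]}$ so that any zero divisors of $D(\mathbb{H})$ persist: if $r\in D(\mathbb{H})$ maps to $0$ then $s(x)\,r=0$ in $D(\mathbb{H})[x]$ for some nonzero $s=\sum_i\alpha_i x^i\in\mathbb{H}[x]$, hence $\alpha_i r=0$ for every $i$, and choosing $\alpha_i\neq 0$ and multiplying by $\alpha_i^{-1}\in\mathbb{H}$ gives $r=0$. Chaining these equivalences with the $\mathbb{S}=\mathbb{H}$ instance of the first one yields the full four-term chain.

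For part (3), assume $d\neq 0$ (otherwise $\mathbb{H}(d)=\mathbb{H}$ and the statement is trivial) and recall the standing hypothesis that $\mathbb{S}$ is a \emph{proper} semifield; by Lemma \ref{field_lemma} this rules out any nonzero $g\in\mathbb{H}[x]$ with $g(d)=0$. Since $\mathbb{H}[d]\subseteq\mathbb{S}$ is multiplicatively cancellative, $\mathbb{H}(d)=\mathrm{Frac}(\mathbb{H}[d])$ is a sub-semifield of $\mathbb{S}$, and $f(x)/g(x)\mapsto f(d)g(d)^{-1}$ is therefore a well-defined (using $f_1g_2=f_2g_1\Rightarrow f_1(d)g_2(d)=f_2(d)g_1(d)$) surjective semiring homomorphism $\psi:\mathbb{H}(x)\to\mathbb{H}(d)$ fixing $\mathbb{H}$ and sending $x\mapsto d$. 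The quoted semifield-homomorphism lemma then makes $\chi=\psi^{-1}\circ\psi$ a congruence whose $1$-class $K=\psi^{-1}(1)$ is a kernel; since $\chi$ is recovered from $K$, the induced map $\mathbb{H}(x)/K\to\mathbb{H}(d)$ is an isomorphism carrying $xK$ to $d$ and fixing $\mathbb{H}$, and $K\cap\mathbb{H}=\{1\}$ because $\psi$ restricts to the identity on $\mathbb{H}$. The one genuinely delicate point of the whole argument is precisely this well-definedness of $\psi$: it rests on excluding roots $g(d)=0$ of nonzero $g\in\mathbb{H}[x]$, which is exactly where Lemma \ref{field_lemma} and the proper-semifield convention are indispensable; in the field case one would instead pass through the ideal--kernel correspondence of Proposition \ref{cor6_2} applied to the kernel of $D(\mathbb{H})[x]_{\mathbb{H}[x]}\to D(\mathbb{H}(d))$. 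Everything else is routine manipulation of difference rings and localizations.
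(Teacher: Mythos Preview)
The paper does not supply a proof of this proposition at all: it is simply quoted from \cite[Chapter 9]{Cancellative1} and used as a black box. So there is nothing in the paper to compare your argument against.

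That said, your argument is correct and is essentially the standard one. Two small remarks. In part~(1), the injectivity step (``$g/h\mapsto 0$ forces $g=0$'') tacitly uses that nonzero elements of $\mathbb{H}[x]$ are not zero divisors on $D(\mathbb{H})[x]$; you prove exactly this in part~(2) via the leading-coefficient-is-a-unit argument, so you might invoke it earlier rather than later. In part~(3), the passage from the kernel $K=\psi^{-1}(1)$ back to the full congruence $\chi$ relies on the fact that in a (proper) semifield every congruence is determined by its $1$-class through $a\,\chi\,b \Leftrightarrow ab^{-1}\in K$; this is standard and implicit in the lemma you cite, but it is the step doing the real work in getting $\mathbb{H}(x)/K\cong\mathbb{H}(d)$.
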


\ \\

\begin{note}
In the notation of Proposition \ref{prop_cancelative_ext_stracture}, we have that $\mathbb{H}[x] \subset \mathbb{H}(x)$ and \linebreak $\mathbb{H}[x] \subset D(\mathbb{H})[x]=D(\mathbb{H}[x])$, moreover
$\mathbb{H}(x) \subset D(\mathbb{H}(x))$ and $D(\mathbb{H})[x] \subset D(\mathbb{H}(x))$ the last inclusion is obtained via the localization map of $D(\mathbb{H})[x]$ with respect to the multiplicative subset $\mathbb{H}[x]$.
\end{note}

\ \\

\begin{prop}\label{algebraic_prop}
Let $\mathbb{S} = \mathbb{H}(d)$ be a proper semifield extension of the semifield $\mathbb{H}$, where $d \in \mathbb{S}$. If $D(\mathbb{S})$ is a field, then $d$ is algebraic over $\mathbb{H}$.
\end{prop}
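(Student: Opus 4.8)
The plan is to argue by contradiction: assume $d$ is transcendental over $\mathbb{H}$ (equivalently, over $D(\mathbb{H})$) and produce a kernel of $\mathbb{S}$ lying strictly between $\{1\}$ and $\mathbb{S}$. Via Proposition~\ref{cor6_2} such a kernel corresponds to a nonzero proper ideal of $D(\mathbb{S})$, so $D(\mathbb{S})$ is not simple, hence not a field --- contradicting the hypothesis. First I would record two preliminary facts. Since $\mathbb{S}=\mathbb{H}(d)$ is a \emph{proper} semifield, $-1\notin\mathbb{S}$ (otherwise $\mathbb{S}$ would be closed under $x\mapsto -x$, hence a ring, hence a field); in particular $-1\notin\mathbb{H}$. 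Consequently $\mathbb{H}$ is zerosumfree: a relation $a_1+\dots+a_k=0$ with the $a_i\in\mathbb{H}$ not all zero would, after deleting the zero summands and isolating one remaining term, give $a_j=-\beta$ with $\beta=\sum_{i\neq j}a_i\in\mathbb{H}^{\ast}$, hence $-1=a_j\beta^{-1}\in\mathbb{H}$, a contradiction. In particular $g(1)=\sum_i\alpha_i\neq 0$ for every nonzero $g=\sum_i\alpha_i x^i\in\mathbb{H}[x]$.

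Next I would build an ``evaluate at $x=1$'' semifield homomorphism. Write a typical element of $\mathbb{S}=\mathbb{H}(d)$ as $f(d)/g(d)$ with $f,g\in\mathbb{H}[x]$ and $g\neq 0$. If $f_1(d)/g_1(d)=f_2(d)/g_2(d)$ in $\mathbb{S}$, then multiplying by the invertible elements $g_1(d),g_2(d)$ gives $f_1(d)g_2(d)=f_2(d)g_1(d)$, so the polynomial $f_1g_2-f_2g_1\in D(\mathbb{H})[x]$ is annihilated by $d$; transcendence of $d$ over $D(\mathbb{H})$ forces $f_1g_2=f_2g_1$ in $\mathbb{H}[x]$, hence $f_1(1)g_2(1)=f_2(1)g_1(1)$, and since all these values at $1$ are nonzero (zerosumfreeness) we get $f_1(1)/g_1(1)=f_2(1)/g_2(1)$ in $\mathbb{H}$. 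Thus $\psi\colon\mathbb{S}\to\mathbb{H}$, $f(d)/g(d)\mapsto f(1)/g(1)$, is well defined, and it is plainly a surjective semifield homomorphism fixing $\mathbb{H}$. It is not injective ($\psi(d)=1=\psi(1)$, while $d\neq 1$, since $d=1$ would annihilate $x-1\in D(\mathbb{H})[x]$), and $\psi^{-1}(1)\neq\mathbb{S}$ since $\psi(1+1)=1+1\neq 1$ ($\mathbb{H}$ is cancellative with at least two elements). By the lemma quoted above on semifield homomorphisms and kernels, $K=[1]_{\chi}$ with $\chi=\psi^{-1}\circ\psi$ is a kernel of $\mathbb{S}$ with $\{1\}\subsetneq K\subsetneq\mathbb{S}$, and $\mathbb{S}/K\cong\mathbb{H}$ is cancellative.

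Finally I would apply Proposition~\ref{cor6_2}: the lattice of ideals of $R=D(\mathbb{S})$ is isomorphic to the lattice of kernels $K'$ of $\mathbb{S}$ with $\mathbb{S}/K'$ cancellative, the ideals $(0)$ and $R$ corresponding to the kernels $\{1\}$ and $\mathbb{S}$. Our $K$ is a kernel of exactly this type, different from both $\{1\}$ and $\mathbb{S}$, so it corresponds to an ideal of $D(\mathbb{S})$ that is neither zero nor everything. Hence $D(\mathbb{S})$ is not simple and, in particular, not a field, contradicting the hypothesis; therefore $d$ is algebraic over $\mathbb{H}$.

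The step I expect to demand the most care is the well-definedness of $\psi$, where transcendence of $d$ over the \emph{difference ring} $D(\mathbb{H})$ (not merely over $\mathbb{H}$) and zerosumfreeness of $\mathbb{H}$ both get used, and which is the only place the properness of $\mathbb{S}$ really enters. A computational alternative avoiding $\psi$ is to use Proposition~\ref{prop_cancelative_ext_stracture} instead: transcendence of $d$ gives $D(\mathbb{S})\cong D(\mathbb{H}(x))=D(\mathbb{H})[x]_{\mathbb{H}[x]}$, and one checks that $x-1$ is a nonzero nonunit there --- nonzero because the elements of $\mathbb{H}^{\ast}$ are units, hence non-zero-divisors in $D(\mathbb{H})[x]$, so $D(\mathbb{H})[x]$ embeds into the localization, and a nonunit because an identity $(x-1)g=h$ with $0\neq h\in\mathbb{H}[x]$ would force $h(1)=0$, impossible by zerosumfreeness --- so $D(\mathbb{S})$ cannot be a field.
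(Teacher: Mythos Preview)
Your argument is correct, and in fact you give two independent proofs. Your \emph{alternative} route at the end is essentially the paper's own proof: the paper also assumes $d$ transcendental, identifies $D(\mathbb{S})$ with the localization $D(\mathbb{H})[x]_{\mathbb{H}[x]}$, and then exhibits a nonzero nonunit, namely $x-\alpha$ for $0\neq\alpha\in\mathbb{H}$; the obstruction to $x-\alpha$ being a unit is that any $f\in\mathbb{H}[x]$ divisible by $x-\alpha$ would satisfy $f(\alpha)=0$, which the paper rules out via Lemma~\ref{field_lemma} rather than by your direct zerosumfreeness argument. So your alternative and the paper's proof differ only cosmetically (you take $\alpha=1$ and argue zerosumfreeness by hand).

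Your \emph{main} approach, on the other hand, is a genuinely different and rather clean route: instead of analysing the localization, you produce a nontrivial cancellative quotient of $\mathbb{S}$ directly via the evaluation homomorphism $\psi:\mathbb{H}(d)\to\mathbb{H}$, $f(d)/g(d)\mapsto f(1)/g(1)$, and then invoke the kernel--ideal correspondence of Proposition~\ref{cor6_2} to conclude that $D(\mathbb{S})$ is not simple. What this buys you is that you never touch the explicit description $D(\mathbb{S})=D(\mathbb{H})[x]_{\mathbb{H}[x]}$ from Proposition~\ref{prop_cancelative_ext_stracture}; the whole argument lives on the semifield side. The paper's route, by contrast, is more ring-theoretic and makes the offending ideal of $D(\mathbb{S})$ completely explicit. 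Both arguments ultimately hinge on the same fact---that a nonzero polynomial in $\mathbb{H}[x]$ cannot vanish at a point of $\mathbb{H}$ when $\mathbb{H}$ is proper---which you isolate as zerosumfreeness and the paper packages as Lemma~\ref{field_lemma}.
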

\begin{proof}
Assume $d \in \mathbb{S}$ is not algebraic. Thus, we may replace $d$ by an indeterminate $x$. Now, $D(\mathbb{S}) = D(\mathbb{H}(x))= \frac{D(\mathbb{H})[x]}{\mathbb{H}[x]}$ is a field, thus, is simple. The latter yields that for any ideal $A \lhd D(\mathbb{H})[x]$,  $A \cap \mathbb{H}[x] \neq \emptyset$.
Consider the ideal $B = p(x)D(\mathbb{H})[x] \lhd D(\mathbb{H})[x]$ generated by $p(x) = x - \alpha$ with $0 \neq \alpha \in \mathbb{H}$, then $\alpha$ is a root for any polynomial $f(x) \in B$. If there exists $0 \neq f(x) \in \mathbb{H}[x] \cap B$, then $f(\alpha) = 0$ and so Lemma \ref{field_lemma} implies that $\mathbb{H}$ is a field, which contradicts our assumption of $\mathbb{H}$ being a proper semifield. We thus conclude that $D(\mathbb{H})[x]$ has no proper ideals and thus is a field, which yields that $x$ is algebraic over $\mathbb{H}$ (i.e. algebraic over $D(\mathbb{H})$), contradiction.
\end{proof}

\begin{cor}
As our interest lies in affine cancellative semifields extensions $\mathbb{S}$  of \ $\mathbb{H}$ such that $D(\mathbb{S})$ is a field, applying Proposition \ref{algebraic_prop} inductively yields that $\mathbb{S}$ must be an algebraic extension of $\mathbb{H}$.
\end{cor}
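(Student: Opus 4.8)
The plan is to reduce the affine case to the simple case settled by Proposition~\ref{algebraic_prop}, by climbing a tower of simple extensions. Write $\mathbb{S}=\mathbb{H}(a_1,\dots,a_n)$, put $\mathbb{H}_0=\mathbb{H}$ and $\mathbb{H}_i=\mathbb{H}_{i-1}(a_i)=\mathbb{H}(a_1,\dots,a_i)$, so $\mathbb{H}_n=\mathbb{S}$ and each $\mathbb{H}_i$ is a simple extension of $\mathbb{H}_{i-1}$. Two elementary remarks come first. (i) Each $\mathbb{H}_i$ is again a \emph{proper} semifield: a semifield containing an element $\kappa$ with $\kappa+1=0$ is a ring (then $\lambda=-(\lambda\kappa)$ for every $\lambda$), so if some $\mathbb{H}_i$ were a field then $-1\in\mathbb{H}_i\subseteq\mathbb{S}$ would force $\mathbb{S}$ to be a field, contradicting that $\mathbb{S}$ is a proper semifield; hence every rung is a simple proper cancellative semifield extension. (ii) For any proper cancellative semifield $\mathbb{K}$ the ring $D(\mathbb{K}(x))=D(\mathbb{K})[x]_{\mathbb{K}[x]}$ is \emph{not} a field, since $x-1$ is not invertible in it: $x-1\notin\mathbb{K}[x]$ (no $-1$), and no multiple $(x-1)g(x)$ with $g\ne 0$ lies in $\mathbb{K}[x]$ either, for such a multiple would be an element of $\mathbb{K}[x]$ having $1$ as a root, forcing $\mathbb{K}$ to be a field by Lemma~\ref{field_lemma}.

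The core is the claim, proved by downward induction on $i$, that $D(\mathbb{H}_i)$ is a field and (for $i\ge 1$) $a_i$ is algebraic over $\mathbb{H}_{i-1}$. The base case $i=n$ is the hypothesis that $D(\mathbb{S})$ is a field. For the inductive step, assuming $D(\mathbb{H}_i)$ is a field, Proposition~\ref{algebraic_prop} applied to the proper simple extension $\mathbb{H}_i=\mathbb{H}_{i-1}(a_i)$ gives that $a_i$ is algebraic over $\mathbb{H}_{i-1}$; what remains — and what I expect to be the real obstacle — is to carry the ``difference ring is a field'' property one step down, i.e. to prove the following. \emph{If $\mathbb{K}$ is a proper cancellative semifield and $\mathbb{K}(a)$ is a proper simple extension with $D(\mathbb{K}(a))$ a field, then $D(\mathbb{K})$ is a field.} A subring of a field is only a domain a priori, so this needs the structure at hand. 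I would argue thus: by Proposition~\ref{prop_cancelative_ext_stracture}(3) write $\mathbb{K}(a)\cong\mathbb{K}(x)/K$ with $K\cap\mathbb{K}=\{1\}$, so by Proposition~\ref{cor6_2} $D(\mathbb{K}(a))\cong D(\mathbb{K}(x))/\hat A$ for the ideal $\hat A$ of $D(\mathbb{K}(x))=D(\mathbb{K})[x]_{\mathbb{K}[x]}$ attached to $K$; the condition $K\cap\mathbb{K}=\{1\}$ translates (using that elements of $\mathbb{K}^\ast$ are units, so a proper ideal of $D(\mathbb{K}(x))$ meets $D(\mathbb{K})$ only in differences $\kappa-1$, each then forced to $0$) into $\hat A\cap D(\mathbb{K})=(0)$, so $D(\mathbb{K})$ embeds into the field $F:=D(\mathbb{K}(a))$ and $\hat A$ is maximal. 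Contracting $\hat A$ to $D(\mathbb{K})[x]$ gives a prime $\mathfrak p$ with $\mathfrak p\cap D(\mathbb{K})=(0)$ and $F=Frac\big(D(\mathbb{K})[x]/\mathfrak p\big)$; if the image $\theta$ of $x$ were transcendental over $E:=Frac(D(\mathbb{K}))$ then $\mathfrak p=(0)$ and $F=D(\mathbb{K})[x]_{\mathbb{K}[x]}$, contradicting remark~(ii). Hence $\theta$ is algebraic over $E$ and $F=E(\theta)$ is a finite extension of $E$.

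The last point is to deduce $D(\mathbb{K})=E$, and this I expect to be the crux. Given $0\ne u\in D(\mathbb{K})$, after scaling by a unit of $\mathbb{K}$ we may take $u=\kappa-1$ with $\kappa\in\mathbb{K}$; since $F=Frac\big(D(\mathbb{K})[x]/\mathfrak p\big)$ is obtained from $D(\mathbb{K})[x]/\mathfrak p$ by inverting the images of $\mathbb{K}[x]$, we get $u^{-1}=\overline{p(x)}/\overline{h(x)}$ with $p\in D(\mathbb{K})[x]$ and $0\ne h\in\mathbb{K}[x]$, i.e. $h(x)-u\,p(x)\in\mathfrak p$. The plan is to arrange such a representation with $\deg\big(h-u\,p\big)<[E(\theta):E]$ — by subtracting off suitable $D(\mathbb{K})[x]$-multiples of a denominator-cleared equation of $\theta$ over $E$ while keeping the denominator inside $\mathbb{K}[x]$ — so that $h-u\,p\in\mathfrak p$ becomes the polynomial identity $h=u\,p$ over $D(\mathbb{K})$; then comparing coefficients yields $h_j=(\kappa-1)\pi_j$ for all $j$, and for any $j$ with $\pi_j\ne 0$ (one exists, else $u^{-1}=0$) the domain property forces $h_j\ne 0$, while $h_j\in\mathbb{K}\setminus\{0\}$ is invertible in $D(\mathbb{K})$, whence $u^{-1}=\pi_j h_j^{-1}\in D(\mathbb{K})$. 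Performing the degree reduction while retaining a denominator in $\mathbb{K}[x]$, together with the positivity bookkeeping in this final step, is exactly where properness of $\mathbb{K}$ (absence of additive inverses) is used, and is the hardest part of the argument.

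Granting the claim, $D(\mathbb{H})=D(\mathbb{H}_0)$ is a field and every $a_i$ is algebraic over $\mathbb{H}_{i-1}$. A short upward induction finishes: $a_i$ is algebraic over $D(\mathbb{H}_{i-1})$, and if $D(\mathbb{H}_{i-1})$ is a finite field extension of $D(\mathbb{H})$ then $D(\mathbb{H}_i)=D(\mathbb{H}_{i-1})[a_i]_{\mathbb{H}_{i-1}[a_i]}=D(\mathbb{H}_{i-1})(a_i)$ (a localization of a field is that field) is again a finite field extension of $D(\mathbb{H})$. Hence $D(\mathbb{S})$ is a finite extension of the field $D(\mathbb{H})$; since $\mathbb{S}\subseteq D(\mathbb{S})$, every element of $\mathbb{S}$ is a root of a nonzero polynomial over $D(\mathbb{H})$, i.e. $\mathbb{S}$ is algebraic over $\mathbb{H}$.
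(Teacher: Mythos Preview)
Your inductive strategy along the tower $\mathbb{H}=\mathbb{H}_0\subset\cdots\subset\mathbb{H}_n=\mathbb{S}$ is precisely what the paper intends; indeed, the paper offers no argument beyond the phrase ``applying Proposition~\ref{algebraic_prop} inductively'' embedded in the statement itself, so there is nothing further to compare against, and your remarks~(i) and~(ii) are correct and already sharper than the paper's own proof of Proposition~\ref{algebraic_prop}.

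You have, however, put your finger on a genuine gap that the paper glosses over. To iterate Proposition~\ref{algebraic_prop} down the tower one needs $D(\mathbb{H}_i)$ to be a field at every stage, and the descent claim ``$D(\mathbb{K}(a))$ a field $\Rightarrow D(\mathbb{K})$ a field'' is nowhere addressed in the paper. Your reduction of this claim---identifying $D(\mathbb{K}(a))$ with $E[\theta]$ for $E=\mathrm{Frac}(D(\mathbb{K}))$ and then trying to show $D(\mathbb{K})=E$---is the right shape, but the final degree-reduction step is problematic exactly where you suspect. The only annihilating relation for $\theta$ available in $D(\mathbb{K})[x]$ has leading coefficient a mere nonzero element of the domain $D(\mathbb{K})$ rather than a unit, so each reduction step forces you to multiply $h$ by that leading coefficient, and there is no evident reason this keeps $h$ inside $\mathbb{K}[x]$; the ``positivity bookkeeping'' you invoke is not spelled out and does not obviously succeed. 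As written, then, your argument is a sound outline with the crux honestly flagged as unfinished; completing it would require either an Artin--Tate style argument adapted to the particular localization $D(\mathbb{K})[x]_{\mathbb{K}[x]}$, or a direct exploitation of the properness of $\mathbb{K}$ that you allude to but do not carry out.
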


\begin{thm}\label{thm1}\cite[Theorem (7.2)]{Hom_Semifields}
Let $\mathbb{H}$ be a commutative cancellative semifield and $\mathbb{S}= \mathbb{H}(x)$ a simple proper semifield extension of $\mathbb{H}$ where $x$ is transcendental over $D(\mathbb{H})$. Then the lattice of ideals $A$ of $D(\mathbb{S})$  is isomorphic to the lattice of kernels $K$ of $\mathbb{S}$ such that $\mathbb{S}/K$ is cancellative by $$A \mapsto K=(1+A) \cap \mathbb{S}.$$
The lattice of ideals $A$ of $D(\mathbb{S})$ is isomorphic to the lattice of ideals $A'$ of $D(\mathbb{H})[x]$ by the well-known map $$A \mapsto A'= A \cap D(\mathbb{H})[x]$$
with the property that for any $h(x) \in \mathbb{H}[x]$ and any $g(x) \in D(\mathbb{H})[x]$:
\begin{equation}\label{idealeq}
h(x)g(x) \in A' \Rightarrow g(x) \in A'.
\end{equation}
The above isomorphisms comprise a bijective correspondence between the lattice of the specified kernels and the lattice of ideals in $D(\mathbb{H})[x]$ given by
$$\forall a(x),b(x) \in \mathbb{H}[x] \ : \ \frac{a(x)}{b(x)} \in K  \Leftrightarrow a(x) - b(x) \in A'.$$
\end{thm}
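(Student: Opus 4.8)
The plan is to assemble the asserted correspondence from pieces already available in the literature cited in the excerpt, and then to verify the explicit formula on the bottom line by a direct computation in the difference ring. First I would invoke Proposition \ref{cor6_2} applied to the cancellative proper semifield $\mathbb{S} = \mathbb{H}(x)$: this already gives the lattice isomorphism $A \mapsto K = (1+A) \cap \mathbb{S}$ between ideals of $D(\mathbb{S})$ and the kernels $K$ of $\mathbb{S}$ for which $\mathbb{S}/K$ is cancellative, together with the inverse $K \mapsto \hat A = (K-1)\mathbb{S}$. So the first displayed isomorphism of the theorem requires nothing new. Next, for the second isomorphism, I would use the description of $D(\mathbb{S})$ from Proposition \ref{prop_cancelative_ext_stracture}(1), namely $D(\mathbb{H}(x)) = D(\mathbb{H})[x]_{\mathbb{H}[x]}$, the localization of $D(\mathbb{H})[x]$ at the multiplicative set $\mathbb{H}[x]$. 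For a localization $R_S$ of a commutative ring $R$ at a multiplicative set $S$, the standard correspondence theorem says that $A \mapsto A \cap R$ is a lattice isomorphism from the ideals of $R_S$ onto the ideals $A'$ of $R$ that are \emph{saturated} with respect to $S$, i.e. satisfy $sg \in A' \Rightarrow g \in A'$ for $s \in S$; its inverse is $A' \mapsto A' R_S$. Taking $R = D(\mathbb{H})[x]$ and $S = \mathbb{H}[x]$, this is exactly the second isomorphism, and condition \eqref{idealeq} is precisely the saturation condition $h(x)g(x) \in A' \Rightarrow g(x) \in A'$ for $h \in \mathbb{H}[x]$, $g \in D(\mathbb{H})[x]$. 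Here I should check that every ideal of $D(\mathbb{H})[x]$ of the form $A \cap D(\mathbb{H})[x]$ really is saturated — but that is immediate, since if $h(x)g(x) \in A$ with $h(x) \in \mathbb{H}[x] \subset D(\mathbb{S})^{\times}$ then $g(x) = h(x)^{-1}\bigl(h(x)g(x)\bigr) \in A$, hence in $A \cap D(\mathbb{H})[x]$.

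Composing the two isomorphisms gives the bijective correspondence between the specified kernels $K$ of $\mathbb{S}$ and the $S$-saturated ideals $A'$ of $D(\mathbb{H})[x]$. It remains to pin down the explicit rule: for $a(x), b(x) \in \mathbb{H}[x]$, $\frac{a(x)}{b(x)} \in K \Leftrightarrow a(x) - b(x) \in A'$. For this I would unwind the two maps. Given $A \lhd D(\mathbb{S})$ with associated kernel $K = (1+A)\cap \mathbb{S}$ and associated saturated ideal $A' = A \cap D(\mathbb{H})[x]$, I compute: $\frac{a(x)}{b(x)} \in K$ iff $\frac{a(x)}{b(x)} \in 1 + A$, i.e. iff $\frac{a(x)}{b(x)} - 1 = \frac{a(x) - b(x)}{b(x)} \in A$. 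Since $b(x) \in \mathbb{H}[x]$ is a unit in $D(\mathbb{S})$ and $A$ is an ideal, $\frac{a(x)-b(x)}{b(x)} \in A$ iff $a(x) - b(x) \in A$; and since $a(x) - b(x) \in D(\mathbb{H})[x]$, this is equivalent to $a(x) - b(x) \in A \cap D(\mathbb{H})[x] = A'$. That chain of equivalences is the claimed formula.

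I would also note the one routine consistency point that the theorem leaves implicit: the kernels $K$ appearing here automatically satisfy $K \cap \mathbb{H} = \{1\}$, because $A'$ contains no nonzero element of $\mathbb{H}[x]$ — if $0 \neq f(x) \in \mathbb{H}[x] \cap A'$ then $f(x)$ would be both a unit of $D(\mathbb{S})$ and a member of the ideal $A$, forcing $A = D(\mathbb{S})$ and $K = \mathbb{S}$, the degenerate case; in particular constants $\alpha \in \mathbb{H}$ with $\alpha = \frac{a}{b}$, $a,b \in \mathbb{H}$, lie in $K$ only when $a - b \in A'$, i.e. $a = b$. The main obstacle, such as it is, is not any single hard step but making sure the localization-correspondence theorem is applied with the right multiplicative set and that the saturation condition is matched \emph{verbatim} to \eqref{idealeq}; once the dictionary $R = D(\mathbb{H})[x]$, $S = \mathbb{H}[x]$, $R_S = D(\mathbb{S})$ is fixed, everything else is bookkeeping with units in $D(\mathbb{S})$. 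I expect the proof in the paper to essentially cite \cite[Theorem (7.2)]{Hom_Semifields} and reproduce this unwinding.
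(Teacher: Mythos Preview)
Your plan is correct, and in fact the paper does not supply any proof of this theorem at all: it simply quotes the statement from \cite[Theorem (7.2)]{Hom_Semifields} and uses it as a black box for the subsequent Proposition~\ref{prop_cancellative_supplement} and Corollary~\ref{maincor}. So there is nothing in the paper to compare against; your assembly via Proposition~\ref{cor6_2} for the first isomorphism and the standard ideal correspondence for the localization $D(\mathbb{H})[x]_{\mathbb{H}[x]}$ for the second, followed by the unit-chasing computation for the displayed equivalence, is exactly the argument one would give.

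One small correction to your closing parenthetical: it is \emph{not} automatic that the kernels appearing here satisfy $K \cap \mathbb{H} = \{1\}$. What is automatic (for $K \neq \mathbb{S}$) is $A' \cap \mathbb{H}[x] = \emptyset$, but the stronger condition $K \cap \mathbb{H} = \{1\}$ is equivalent to $A' \cap D(\mathbb{H}) = \{0\}$, and a difference $a - b$ of constants $a,b \in \mathbb{H}$ lies in $D(\mathbb{H})$, not in $\mathbb{H}[x]$. The paper treats this as a genuine extra hypothesis in Proposition~\ref{prop_cancellative_supplement}(1) and Remark~\ref{rem2}, not as a consequence of non-triviality.
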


\ \\

\begin{prop}\label{prop_cancellative_supplement}\cite[Supplement (7.3)]{Hom_Semifields}
Under the notation of Theorem \ref{thm1}, the \linebreak following assertions hold:\\
\textbf{1.} \  Excluding the trivial case where $A' = D(\mathbb{H})[x]$ corresponding to $K = \mathbb{S}$, we have that $A' \cap \mathbb{H}[x] = \emptyset$, and $A'\cap D(\mathbb{H}) = \{ 0 \}$ is equivalent to $K \cap \mathbb{H} = \{ 1 \}$. If the latter holds, the natural epimorphism $\Psi: \mathbb{S} \rightarrow \mathbb{S}/K$ induces an isomorphism on $\mathbb{H}$. We consider the proper cancellative semifield $\mathbb{S}/K$ as a simple semifield extension $\mathbb{H}(\bar{x})$ of $\mathbb{H}$ for $\bar{x} = xK = \Psi(x)$.
In this setting (see Remark \ref{rem2}), we have that
\begin{itemize}
  \item $\mathbb{H}[\bar{x}] \subset \mathbb{S}/K = \mathbb{H}(\bar{x}) \subset D(\mathbb{S}/K)$.
  \item $\mathbb{H}[\bar{x}] \subset D(\mathbb{H})[x]/A' \cong D(\mathbb{H})[\bar{x}] = D(\mathbb{H}[\bar{x}]) \subset R/A$ with $R = \frac{D(\mathbb{H})[x]}{\mathbb{H}[x]}$.
  \item $R/A \cong D(\mathbb{S}/K)$.
\end{itemize}
In particular, $A'$ consists of all polynomials $g(x) \in D(\mathbb{H})[x]$ \ such that \ $g(\bar{x}) = 0$. \\
Clearly, $A' = \{ 0 \}, K = \{ 1 \}$ and $\bar{x}$ is transcendental over $D(\mathbb{H})$ are mutually equivalent.\\
\textbf{2.} \ Let $\mathbb{H}$ be a commutative cancellative semifield and let $\mathbb{H}(d)$ be a proper semifield extension of  $\mathbb{H}$. Then $$A' = \{ g(x) \in D(\mathbb{H})[x] \ : \ g(d) = 0 \}$$ is an ideal of $D(\mathbb{H})[x]$ which satisfies \eqref{idealeq}, and $\mathbb{H}(x)/K = \mathbb{H}(\bar{x}) \cong \mathbb{H}(d)$ holds for the corresponding kernel $K$ of $\mathbb{H}(x)$.
\end{prop}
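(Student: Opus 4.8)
The plan is to derive both assertions by assembling the correspondences already recorded in Theorem~\ref{thm1}, together with Propositions~\ref{cor6_2} and~\ref{prop_cancelative_ext_stracture}, Lemma~\ref{field_lemma}, and the standing assumption of Note~\ref{note1}. Throughout I write an ideal $A \lhd R = D(\mathbb{H}(x))$, the corresponding kernel $K = (1+A)\cap \mathbb{S}$ of $\mathbb{S} = \mathbb{H}(x)$, and $A' = A \cap D(\mathbb{H})[x]$, which by Theorem~\ref{thm1} is an ideal of $D(\mathbb{H})[x]$ satisfying \eqref{idealeq}; I also use the dictionary $\tfrac{a(x)}{b(x)} \in K \Leftrightarrow a(x) - b(x) \in A'$ valid for $a,b \in \mathbb{H}[x]$.

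First I would dispose of the two intersection statements. If some nonzero $h(x) \in \mathbb{H}[x]$ lay in $A'$, then writing $h(x) = h(x)\cdot 1$ with $1 \in D(\mathbb{H})[x]$ and applying \eqref{idealeq} gives $1 \in A'$, i.e. $A' = D(\mathbb{H})[x]$, the excluded trivial case; hence $A'$ meets $\mathbb{H}[x]$ in no nonzero element. For the equivalence $A' \cap D(\mathbb{H}) = \{0\} \iff K \cap \mathbb{H} = \{1\}$, I would read the dictionary on constant polynomials: $K \cap \mathbb{H}$ consists of the ratios $\tfrac{\mu}{\nu}$ with $\mu,\nu \in \mathbb{H}\setminus\{0\}$ and $\mu - \nu \in A'$, and since such differences exhaust $D(\mathbb{H})$, the condition ``$\gamma \in A' \cap D(\mathbb{H}) \Rightarrow \gamma = 0$'' is equivalent to ``$\tfrac{\mu}{\nu}\in K \Rightarrow \mu = \nu$'', i.e. $K\cap\mathbb{H} = \{1\}$. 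When this holds, $\Psi|_{\mathbb{H}}$ is injective because for $\alpha,\beta \in \mathbb{H}$ one has $\Psi(\alpha)=\Psi(\beta) \iff \alpha\beta^{-1}\in K\cap\mathbb{H} = \{1\}$, so $\Psi$ induces an isomorphism on $\mathbb{H}$; and since $x$ generates $\mathbb{S}$ over $\mathbb{H}$, $\bar x = \Psi(x)$ generates $\mathbb{S}/K$ over $\Psi(\mathbb{H}) \cong \mathbb{H}$, so we may and do write $\mathbb{S}/K = \mathbb{H}(\bar x)$.

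Next I would prove the central identity $A' = \{\, g(x) \in D(\mathbb{H})[x] : g(\bar x) = 0 \,\}$. Given $g \in D(\mathbb{H})[x]$, write $g = a - b$ with $a,b \in \mathbb{H}[x]$ and $b \neq 0$. The standing assumption of Note~\ref{note1}, via Lemma~\ref{field_lemma}, guarantees that $\bar x$ annihilates no nonzero polynomial of $\mathbb{H}[x]$ --- otherwise $\mathbb{H}(\bar x)$ would be a field rather than a proper semifield --- so $b(\bar x) \neq 0$ in $\mathbb{H}(\bar x)$. Then $g \in A'$ iff $a - b \in A'$ iff $\tfrac{a(x)}{b(x)} \in K$ iff $\Psi\!\left(\tfrac{a(x)}{b(x)}\right) = 1$ in $\mathbb{S}/K$ iff $a(\bar x) = b(\bar x)$ (legitimately dividing by $b(\bar x)$) iff $g(\bar x) = 0$ in $D(\mathbb{S}/K) = D(\mathbb{H}(\bar x))$. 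From this the chain of inclusions follows: $\mathbb{H}[\bar x]\subset \mathbb{H}(\bar x) = \mathbb{S}/K \subset D(\mathbb{S}/K)$ is immediate; evaluation at $\bar x$ maps $D(\mathbb{H})[x]$ onto $D(\mathbb{H})[\bar x] = D(\mathbb{H}[\bar x])$ with kernel exactly $A'$, giving $D(\mathbb{H})[x]/A' \cong D(\mathbb{H})[\bar x]$; and $R/A \cong D(\mathbb{S}/K)$ by functoriality of the difference-ring construction together with Proposition~\ref{cor6_2}, under which $D(\mathbb{H})[x]/A'$ sits inside $R/A$ via the localization map. Finally $A' = \{0\} \iff K = \{1\}$ because the lattice isomorphisms of Theorem~\ref{thm1} carry bottom element to bottom element, while $A' = \{0\} \iff \bar x$ transcendental over $D(\mathbb{H})$ is immediate from the identity just proved; this yields the asserted mutual equivalence.

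For assertion~2, given a proper semifield extension $\mathbb{H}(d)$, Proposition~\ref{prop_cancelative_ext_stracture}(3) supplies a kernel $K$ of $\mathbb{H}(x)$ with $K \cap \mathbb{H} = \{1\}$ and an isomorphism $\psi : \mathbb{H}(x)/K \to \mathbb{H}(d)$ fixing $\mathbb{H}$ and sending $\bar x = xK \mapsto d$; $\psi$ extends to the difference rings, fixing $D(\mathbb{H})$ and sending $\bar x \mapsto d$. Hence, under the identification of part~1, $A' = \{\, g(x) \in D(\mathbb{H})[x] : g(\bar x) = 0 \,\} = \{\, g(x) \in D(\mathbb{H})[x] : g(d) = 0 \,\}$, which is an ideal satisfying \eqref{idealeq} by Theorem~\ref{thm1}, and $\mathbb{H}(x)/K = \mathbb{H}(\bar x) \cong \mathbb{H}(d)$. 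The only genuinely delicate point in the whole argument is the step $b(\bar x) \neq 0$: without the properness hypothesis of Note~\ref{note1} the dictionary of Theorem~\ref{thm1} could not be pushed through the evaluation $x \mapsto \bar x$, since divisions by $b(\bar x)$ would be illegitimate; everything else is bookkeeping with the two lattice isomorphisms.
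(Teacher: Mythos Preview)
The paper does not give its own proof of this proposition: it is quoted verbatim from \cite[Supplement~(7.3)]{Hom_Semifields} and stated without argument. Your proof is correct and supplies exactly the kind of verification the paper omits, by systematically reading off the consequences of the two lattice isomorphisms in Theorem~\ref{thm1} and the dictionary $\tfrac{a}{b}\in K \Leftrightarrow a-b\in A'$. In particular your identification of the one nontrivial point --- that $b(\bar x)\neq 0$ for $b\in\mathbb{H}[x]$, which is forced by Lemma~\ref{field_lemma} and the standing properness assumption of Note~\ref{note1} --- is exactly right; without it the passage from the kernel condition to the vanishing condition $g(\bar x)=0$ would break down. One cosmetic remark: since $\mathbb{H}[x]$ does not contain the zero polynomial in this setting, the qualifier ``nonzero'' in your first step is redundant, and the conclusion $A'\cap\mathbb{H}[x]=\emptyset$ (rather than $\{0\}$) is indeed the correct formulation.
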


\ \\

\begin{rem}\label{rem2}
Notice that $K \neq \mathbb{S}$ implies that $K \cap \mathbb{H} \neq \mathbb{H}$. Indeed, otherwise, since $\mathbb{H}$ is closed with respect to addition, there exist $h_1,h_2 \in \mathbb{H} \subset K$ such that $h_1 + h_2 \in K $, which in turn, yields that $\mathbb{S}/K$ is not cancellative. So, the assumption $K \cap \mathbb{H} = \{ 1 \}$ in Proposition \ref{prop_cancellative_supplement}(1) excludes all cases for which $K \cap \mathbb{H} =L$ is a non-trivial kernel of $\mathbb{H}$ such that $\bar{\mathbb{H}} = \mathbb{H}/L$ is cancellative. In these cases, $\mathbb{S}/K$ is an extension $\bar{\mathbb{H}}(\bar{x})$ of $\bar{\mathbb{H}}$ which can be considered in the same way replacing $\mathbb{H}$ by $\bar{\mathbb{H}}$ and $D(\mathbb{H})$ by $D(\bar{\mathbb{H}})$. On the other hand, $\mathbb{H}$ has no non-trivial kernels $L$ such that $\mathbb{H}/L$ is cancellative if and only if $D(\mathbb{H})$ is a field (by Proposition \ref{cor6_2}). The latter is the case of interest for us, and it implies that $D(\mathbb{H}[\bar{x}])$ and $D(\mathbb{S}/K)$ coincide.
\end{rem}

\ \\
\ \\

\begin{cor} \label{maincor}
Let $\mathbb{H}$ be a commutative cancellative semifield such that $\mathbb{F} = D(\mathbb{H})$ is a field. Let $\mathbb{S}=\mathbb{H}(d)$ be a simple proper algebraic semifield extension of $\mathbb{H}$, i.e., $d$ is algebraic over $\mathbb{H}$. Then
$$\mathbb{H}(d) \cong \mathbb{H} + \mathbb{H} \tilde{x} + ... + \mathbb{H} \tilde{x}^{\ n-1}$$
where $\tilde{x} = x + \langle m(x) \rangle $  with $m(x) \in \mathbb{F}[x] \setminus \mathbb{H}[x]$ the minimal polynomial of $d$ over $\mathbb{F}$ and $n = deg(m(x))$.
\end{cor}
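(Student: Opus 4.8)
The plan is to transfer the structure of the field extension $\mathbb{F}(d)=D(\mathbb{H})[x]/\langle m(x)\rangle$ back down to the semifield level. First I would invoke Proposition \ref{prop_cancelative_ext_stracture}(3) together with Proposition \ref{prop_cancellative_supplement} to realize $\mathbb{H}(d)$ as $\mathbb{H}(x)/K$ for the kernel $K$ of $\mathbb{H}(x)$ corresponding to the ideal $A'=\{g(x)\in D(\mathbb{H})[x] : g(d)=0\}$ of $D(\mathbb{H})[x]$. Since $\mathbb{F}=D(\mathbb{H})$ is a field, $D(\mathbb{H})[x]$ is a PID, so $A'=\langle m(x)\rangle$ for the minimal polynomial $m(x)$ of $d$ over $\mathbb{F}$, of degree $n$; and by Note \ref{note1} we may take $m(x)\in\mathbb{F}[x]\setminus\mathbb{H}[x]$ (otherwise $\mathbb{H}(d)$ would be a field by Lemma \ref{field_lemma}). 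By Remark \ref{rem2}, since $D(\mathbb{H})$ is a field the kernel $K$ satisfies $K\cap\mathbb{H}=\{1\}$, $\Psi$ restricts to the identity on $\mathbb{H}$, and $D(\mathbb{H}[\bar x])=D(\mathbb{S}/K)$; write $\tilde{x}=\bar x = xK$.

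Next I would identify the underlying set. From Proposition \ref{prop_cancellative_supplement}(1) we get the chain $\mathbb{H}[\bar x]\subset D(\mathbb{H})[x]/A'\cong D(\mathbb{F})[\bar x]=\mathbb{F}[x]/\langle m(x)\rangle$, and the latter is the $n$-dimensional $\mathbb{F}$-vector space $\mathbb{F}+\mathbb{F}\tilde x+\cdots+\mathbb{F}\tilde x^{\,n-1}$ since $m$ has degree $n$. So every element of $\mathbb{H}(d)\cong\mathbb{H}(\bar x)$, being in particular an element of $D(\mathbb{S}/K)$, is uniquely an $\mathbb{F}$-combination $\sum_{i=0}^{n-1}\gamma_i\tilde x^i$ with $\gamma_i\in\mathbb{F}$. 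The content of the corollary is that for elements actually lying in the semifield $\mathbb{H}(\bar x)$ (not merely its difference ring) one can take the coefficients $\gamma_i$ in $\mathbb{H}$ itself, i.e. $\mathbb{H}(\bar x)=\mathbb{H}+\mathbb{H}\tilde x+\cdots+\mathbb{H}\tilde x^{\,n-1}$ as semirings. One inclusion, $\mathbb{H}+\cdots+\mathbb{H}\tilde x^{\,n-1}\subseteq\mathbb{H}(\bar x)$, is clear since $\mathbb{H}\subseteq\mathbb{H}(\bar x)$, $\tilde x\in\mathbb{H}(\bar x)$ and $\mathbb{H}(\bar x)$ is closed under $+$ and $\cdot$.

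For the reverse inclusion I would argue as follows. A general element of $\mathbb{H}(\bar x)=\{f(\bar x)/g(\bar x): f,g\in\mathbb{H}[x],\ g\neq 0\}$ is a quotient $f(\bar x)/g(\bar x)$ with $f,g\in\mathbb{H}[x]$ (note $g(\bar x)\neq 0$ automatically, as recorded after Definition \ref{defn_simple_semifield_extension} using Note \ref{note1}). I want to rewrite $f(\bar x)/g(\bar x)$ with a denominator in $\mathbb{H}$. Working in the field $D(\mathbb{S}/K)=\mathbb{F}[x]/\langle m(x)\rangle$, the element $g(\bar x)$ is invertible, so $g(\bar x)^{-1}=\sum_{j}\delta_j\tilde x^{\,j}\in\mathbb{F}[x]/\langle m\rangle$; the trick is to separate $g(\bar x)^{-1}$ into its positive and negative parts over $\mathbb{H}$. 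Concretely, using that $\mathbb{F}=D(\mathbb{H})$ consists of differences $a-b$ with $a,b\in\mathbb{H}$, I would clear denominators: there exist $p,q\in\mathbb{H}[x]$ with $g(\bar x)\cdot p(\bar x)=g(\bar x)\cdot q(\bar x)+ (\text{something forcing }g(\bar x)^{-1}$ expressible$)$ — more cleanly, since $g(x)$ and $m(x)$ are coprime in $\mathbb{F}[x]$ we have $u(x)g(x)+v(x)m(x)=1$ for some $u,v\in\mathbb{F}[x]$, hence $u(\bar x)g(\bar x)=1$; writing $u=u_+-u_-$ with $u_\pm\in\mathbb{H}[x]$ gives $u_+(\bar x)g(\bar x)=1+u_-(\bar x)g(\bar x)$ in $\mathbb{H}[\bar x]$, an equation between elements of the semiring $\mathbb{H}[\bar x]\subseteq\mathbb{H}(\bar x)$. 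Then $f(\bar x)/g(\bar x)=f(\bar x)u_+(\bar x)/(1+u_-(\bar x)g(\bar x))$, and since $1+u_-(\bar x)g(\bar x)$ is a sum of products of elements of $\mathbb{H}$ and $\tilde x$, once I know it is a \emph{unit} whose inverse lies in $\mathbb{H}+\cdots+\mathbb{H}\tilde x^{\,n-1}$, I am done by reducing mod $m$. The main obstacle is exactly this last point: showing that the inverse (in $\mathbb{H}(\bar x)$) of an element of $\mathbb{H}[\bar x]$ again lies in $\mathbb{H}+\mathbb{H}\tilde x+\cdots+\mathbb{H}\tilde x^{\,n-1}$, i.e. that this $\mathbb{H}$-submodule is closed under the inversion coming from the semifield structure — equivalently, that the semifield generated inside $D(\mathbb{S}/K)=\mathbb{F}[\bar x]$ by $\mathbb{H}$ and $\tilde x$ is already $\mathbb{H}$-spanned by $1,\tilde x,\dots,\tilde x^{\,n-1}$. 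I expect to handle this by a normalization/rescaling argument in the spirit of Proposition \ref{torprop1}: given $0\neq\beta\in\mathbb{H}[\bar x]$, its norm $N(\beta)=\prod(\text{conjugates})$ lies in $\mathbb{H}$ up to the difference-ring identifications, so $\beta^{-1}=N(\beta)^{-1}\cdot(\text{product of conjugates})$, and the product of conjugates, being a symmetric polynomial expression, can be arranged to have $\mathbb{H}$-coefficients after clearing; then reducing mod $m(x)$ brings it into $\mathbb{H}+\cdots+\mathbb{H}\tilde x^{\,n-1}$. Finally, uniqueness of the representation follows from the $\mathbb{F}$-linear independence of $1,\tilde x,\dots,\tilde x^{\,n-1}$ in $D(\mathbb{S}/K)$, which restricts to independence over $\mathbb{H}$, completing the isomorphism $\mathbb{H}(d)\cong\mathbb{H}+\mathbb{H}\tilde x+\cdots+\mathbb{H}\tilde x^{\,n-1}$.
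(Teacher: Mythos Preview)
Your setup in the first paragraph matches the paper's exactly: you correctly identify $A'=\langle m(x)\rangle$ via Proposition~\ref{prop_cancellative_supplement}(2), invoke Note~\ref{note1} to place $m(x)\in\mathbb{F}[x]\setminus\mathbb{H}[x]$, and use Remark~\ref{rem2} to get $K\cap\mathbb{H}=\{1\}$ and $D(\mathbb{H}[\bar x])=D(\mathbb{S}/K)$.

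Where you diverge from the paper is in the reverse inclusion $\mathbb{H}(\bar x)\subseteq\mathbb{H}+\mathbb{H}\bar x+\cdots+\mathbb{H}\bar x^{\,n-1}$. The paper never attempts to invert $g(\bar x)$ explicitly. Its route is: since $D(\mathbb{H}[\bar x])=D(\mathbb{H})[\bar x]\cong D(\mathbb{H})[x]/\langle m(x)\rangle$ is a field, the cancellative semiring $\mathbb{H}[\bar x]$ is itself a semifield; it is a \emph{proper} semifield because it sits inside the proper semifield $\mathbb{H}(\bar x)$. But $\mathbb{H}(\bar x)$ is by definition the semifield of fractions of $\mathbb{H}[\bar x]$, so once $\mathbb{H}[\bar x]$ is a semifield one gets $\mathbb{H}[\bar x]=\mathbb{H}(\bar x)$ immediately. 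The whole B\'ezout/norm detour is unnecessary.

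Your norm argument, which you yourself flag as ``the main obstacle'', is a genuine gap as written. The product of the conjugates of $\beta\in\mathbb{H}[\bar x]$ is an element of $\mathbb{F}=D(\mathbb{H})$, not a priori of $\mathbb{H}$; the ``product of the remaining conjugates'' likewise has coefficients in $\mathbb{F}$, and nothing you wrote forces these back into $\mathbb{H}$. Reducing mod $m(x)$ does not help: the relation $\bar x^{\,n}=-(m(\bar x)-\bar x^{\,n})$ introduces coefficients from $D(\mathbb{H})$, not from $\mathbb{H}$. The analogy with Proposition~\ref{torprop1} is misleading, because there the reduction used a single \emph{monomial} relation $a^k=\beta\in\mathbb{H}$, which automatically stays in $\mathbb{H}$; here $m(x)$ has several terms with mixed signs. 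Replace this block with the paper's observation that $D(\mathbb{H}[\bar x])$ being a field makes $\mathbb{H}[\bar x]$ a semifield, and the argument closes.
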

\begin{proof}
By our assumption that $\mathbb{H}(d)$ is a \emph{proper} semifield extension, we have that any polynomial $g(x) \in D(\mathbb{H})[x]$, such that $g(d)=0$ admits that $g(x) \not \in \mathbb{H}[x]$, i.e. $g(x)$ is of the form $g(x) = h_1(x) - h_2(x)$ for $h_1(x),h_2(x) \in \mathbb{H}[x]$ where $h_1(x) - h_2(x) \not \in \mathbb{H}[x]$. Now, since $D(\mathbb{H})$ is a field, by statement (2) of Proposition  \ref{prop_cancellative_supplement}, using the notation of the corollary, the ideal $A'$ is just $\langle m(x) \rangle$, where $m(x)$ is the minimal polynomial of $d$ over $D(\mathbb{H})$. The isomorphism $\Phi : D(\mathbb{H})[x]/A' \rightarrow D(\mathbb{H})[\bar{x}]$ of rings is induced by the map sending a polynomial $p(x) \in D(\mathbb{H})[x]  \ \mapsto p(\bar{x})$.
Now, $$D(\mathbb{H})[x]/A' = D(\mathbb{H})[x]/ \langle m(x) \rangle =  D(\mathbb{H}) + D(\mathbb{H}) \tilde{x} + ... + D(\mathbb{H}) \tilde{x}^{n-1},$$
where $\tilde{x} = x + A'$ and $n = deg(m(x))$. Thus, using $\Phi$, we get that $D(\mathbb{H})[\bar{x}]$ is the set of elements of the form $\Phi\left( \sum_{i=0}^{n-1}\alpha_{i} \tilde{x}^{i}\right) = \sum_{i=0}^{n-1}\alpha_{i} \bar{x}^{i}$, i.e.
$$D(\mathbb{H})[\bar{x}] = D(\mathbb{H}) + D(\mathbb{H}) \bar{x} + ... + D(\mathbb{H}) \bar{x}^{\ n-1}$$
where $\bar{x} = xK$ and $K$ the kernel corresponding to $A' = \langle m(x) \rangle$ and $n = deg(m(x))$. $D(\mathbb{H})[x]/\langle m(x) \rangle \cong D(\mathbb{H})[\bar{x}]$ and $D(\mathbb{H})[x]/\langle m(x) \rangle$ is a field, thus so is $D(\mathbb{H})[\bar{x}]$. Since $D(\mathbb{H})[\bar{x}] = D(\mathbb{H}[\bar{x}])$, we have that $\mathbb{H}[\bar{x}] = \mathbb{H} + \mathbb{H} \bar{x} + ... + \mathbb{H} \bar{x}^{n-1} $ is either a proper semifield or a field. The second cannot be, since $\mathbb{H}[\bar{x}] \subset \mathbb{H}(\bar{x})$ and $\mathbb{H}(\bar{x})$ is a proper semifield extending $\mathbb{H}$. Moreover, as $\mathbb{H}(\bar{x})$ is the semifield of fractions of $\mathbb{H}[\bar{x}]$, $\mathbb{H}[\bar{x}]$ being a semifield yields that $\mathbb{H}[\bar{x}] =  \mathbb{H}(\bar{x}) \cong \mathbb{H}(d)$. Using $$\Phi^{-1}(\mathbb{H} + \mathbb{H} \bar{x} + ... + \mathbb{H} \bar{x}^{n-1}) = \mathbb{H} + \mathbb{H} \tilde{x} + ... + \mathbb{H} \tilde{x}^{n-1},$$ we obtain the desired result.
\end{proof}

\begin{rem}
In the setting of Corollary \ref{maincor}, we can use the correspondence
$$\forall \ a(x), b(x) \in \mathbb{H}[x] \ : \ \frac{a(x)}{b(x)} \in K \Leftrightarrow a(x) - b(x) \in \langle m(x) \rangle $$
for understanding the form of the kernel $K$.
We have that $\frac{a(x)}{b(x)} \in K$ if and only if $a(x)-b(x) = m(x)g(x)$ for some $g(x) \in \mathbb{F}[x]$.
For any decomposition $$m(x) = m_{1}(x) - m_{2}(x) \ , \ \ g(x) = g_{1}(x) - g_{2}(x)$$
with $m_{1}(x), m_{2}(x),g_{1}(x), g_{2}(x) \in \mathbb{H}[x]$, we can write
$$m(x)g(x) = (m_{1}(x)g_{1}(x) + m_{2}(x)g_{2}(x)) - (m_{1}(x)g_{2}(x) + m_{2}(x)g_{1}(x)).$$
Thus, using the opposite direction of the correspondence yields that $$\frac{a(x)}{b(x)} = \frac{m_{1}(x)g_{1}(x) + m_{2}(x)g_{2}(x)}{m_{1}(x)g_{2}(x) + m_{2}(x)g_{1}(x)}.$$
Now, let $m(x) = m_{+}(x) - m_{-}(x)$ with $m_{+}(x), m_{-}(x) \in \mathbb{H}[x]$ be the unique \linebreak decomposition such that there is no $s_1(x),s_2(x),h(x) \in \mathbb{H}[x]$ such that $$m_{+}(x)=s_1(x)+h(x) \  \text{and}  \  m_{-}(x)=s_2(x) +h(x).$$ This decomposition is obtained, grouping all the monomials in $\mathbb{H}[x]$ and all the \linebreak monomials in $-\mathbb{H}[x]$ of $m(x)$ (note that this decomposition can be always obtained as $m(x) \in D(\mathbb{H})[x]$). Thus, by the above, we have $$m_1(x) = m_{+}(x) + h(x) \ \text{and}  \ m_{2}(x) = m_{-}(x) +h(x)$$ for some $h(x) \in \mathbb{H}[x]$, yielding that
$$\frac{a(x)}{b(x)} = \frac{m_{+}(x)g_{1}(x) + m_{-}(x)g_{2}(x) + h(x)(g_1(x)+g_2(x))}{m_{+}(x)g_{2}(x) + m_{-}(x)g_{1}(x)  + h(x)(g_1(x)+g_2(x))}.$$
Now, since $h(x),g_1(x)$ and $g_2(x)$ vary over all elements of $\mathbb{H}[x]$, we get that
\begin{align*}
K = \left\{ \frac{m_{+}(x)g_{1}(x) + m_{-}(x)g_{2}(x) + h(x)(g_1(x)+g_2(x))}{m_{+}(x)g_{2}(x) + m_{-}(x)g_{1}(x)  + h(x)(g_1(x)+g_2(x))} \ : \ g_1,g_2,h \in \mathbb{H}[x] \right\}.
\end{align*}
\end{rem}

%

%

In view of Corollary \ref{maincor}, we can introduce the following definition:
\begin{defn}
Let $\mathbb{H}$ be a commutative cancellative semifield such that $\mathbb{F} = D(\mathbb{H})$ is a field. Let $\mathbb{S}=\mathbb{H}(d)$ be a simple algebraic extension of $\mathbb{H}$. We define the \emph{dimension} of \ $\mathbb{S}$ over $\mathbb{H}$, $$dim_{\mathbb{H}}\mathbb{S} = [\mathbb{S} : \mathbb{H}] = [D(\mathbb{H})[d] : D(\mathbb{H})]=deg(g)$$
where $g(x) \in D(\mathbb{H})[x]$ is the minimal polynomial of $d$ over $\mathbb{F}$.
\end{defn}

\begin{rem}
In the case where $d$ is transcendental, $\mathbb{H}[d] \subset \mathbb{H}(d) \cong \mathbb{H}(x)$ and $[\mathbb{H}[d]:\mathbb{H}]=\infty$, for otherwise, if $[\mathbb{H}[d]:\mathbb{H}]= n <\infty$ we get that $1,d,...,d^n$ are linearly dependent over $D(\mathbb{H})$ yielding a polynomial $f \in D(\mathbb{H})[x]$ such that $f(d)=0$, contradicting the assumption that $d$ is transcendental.
\end{rem}

\begin{cor}
The observations introduced above yield that for $\mathbb{H}$, a commutative cancellative \emph{proper} semifield, such that $D(\mathbb{H})$ is a field, the theory of proper cancellative affine semifield extensions of $\mathbb{H}$ is analogous to the theory of affine field extensions. In particular, one gets the transcendence degree and the degree of an \emph{affine} algebraic extension. The only difference is that field extensions of $\mathbb{H}$ are excluded. These extensions are characterized in Lemma \ref{field_lemma}.
\end{cor}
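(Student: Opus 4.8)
The plan is to reduce the affine case to a tower of simple extensions and then transport everything to difference rings, where the classical theory of affine field extensions already does the work. First I would write a proper cancellative affine extension $\mathbb{S} = \mathbb{H}(a_1,\dots,a_n)$ as a tower $\mathbb{H} = \mathbb{H}_0 \subset \mathbb{H}_1 \subset \dots \subset \mathbb{H}_n = \mathbb{S}$ with $\mathbb{H}_i = \mathbb{H}_{i-1}(a_i)$, each step being a simple extension in the sense of Definition \ref{defn_simple_semifield_extension}. The point of the tower is that each $\mathbb{H}_i$ is again a commutative cancellative semifield, so the simple-extension analysis of Corollary \ref{maincor} and Proposition \ref{prop_cancellative_supplement} applies at every stage, provided $D(\mathbb{H}_i)$ stays a field; I verify this along the tower. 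By Proposition \ref{algebraic_prop}, applied inductively as in its corollary, if $D(\mathbb{S})$ is a field then every $a_i$ is algebraic over the previous stage, so the tower is finite and algebraic.

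The second step is to check that the difference-ring functor $D(-)$ turns a simple proper semifield extension into a simple field extension. For $d = a_i$ algebraic over $\mathbb{H}_{i-1}$, Corollary \ref{maincor} gives $\mathbb{H}_{i-1}(d) \cong \mathbb{H}_{i-1} + \mathbb{H}_{i-1}\tilde{x} + \dots + \mathbb{H}_{i-1}\tilde{x}^{\,n_i-1}$ with $n_i = \deg m_i(x)$ and $m_i(x) \in D(\mathbb{H}_{i-1})[x] \setminus \mathbb{H}_{i-1}[x]$ the minimal polynomial over the field $D(\mathbb{H}_{i-1})$; since $D(\mathbb{H}_{i-1}[\tilde{x}]) = D(\mathbb{H}_{i-1})[\tilde{x}] \cong D(\mathbb{H}_{i-1})[x]/\langle m_i(x)\rangle$ (the note after Proposition \ref{prop_cancelative_ext_stracture} together with the computation in Corollary \ref{maincor}), we get $D(\mathbb{H}_i) = D(\mathbb{H}_{i-1})(d)$, a simple algebraic field extension; in the transcendental case the corresponding identity is $D(\mathbb{H}_i) = D(\mathbb{H}_{i-1})(x)$, a rational function field, by Proposition \ref{prop_cancelative_ext_stracture}(1). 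Iterating, $D(\mathbb{S}) = D(\mathbb{H})(a_1,\dots,a_n)$ is exactly the affine field extension of $\mathbb{F} = D(\mathbb{H})$ generated by the same elements, and in particular $D(\mathbb{H}_i)$ is a field at each stage, closing the induction.

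With this dictionary in place the asserted analogy becomes a transport of structure. I would set $[\mathbb{S}:\mathbb{H}] := [D(\mathbb{S}):D(\mathbb{H})]$ and $\operatorname{trdeg}_{\mathbb{H}}\mathbb{S} := \operatorname{trdeg}_{\mathbb{F}} D(\mathbb{S})$, note that these agree with the stagewise counts read off from the tower (each algebraic step contributing the factor $\deg m_i(x)$ to the degree, each transcendental step contributing $1$ to the transcendence degree), and then invoke the classical tower law and transcendence-basis theory for $\mathbb{F}[x_1,\dots,x_n]$ verbatim. The single caveat to isolate is the hypothesis that $\mathbb{S}$ be a \emph{proper} semifield: by Lemma \ref{field_lemma}, $\mathbb{S}$ fails to be proper exactly when some nonzero element annihilates a polynomial of $\mathbb{H}[x]$ rather than merely of $D(\mathbb{H})[x]$, and Note \ref{note1} records that such extensions are discarded. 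Hence the correspondence is with those affine field extensions of $\mathbb{F}$ that are not of that form, which is precisely the stated ``only difference.''

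The main obstacle I anticipate is not a single hard computation but keeping the bookkeeping honest across the tower: one must ensure that at each stage $\mathbb{H}_i$ remains a proper semifield with $D(\mathbb{H}_i)$ a field — so that Corollary \ref{maincor}, Proposition \ref{prop_cancellative_supplement} and Theorem \ref{thm1} stay applicable — and, crucially, that $D$ genuinely commutes with affine generation, i.e.\ $D(\mathbb{H}_{i-1}(a_i)) = D(\mathbb{H}_{i-1})(a_i)$ as fields, not merely an embedding. Establishing this commutation cleanly, via the localization description $D(\mathbb{H}(x)) = D(\mathbb{H})[x]_{\mathbb{H}[x]}$ of Proposition \ref{prop_cancelative_ext_stracture} and the ideal/kernel correspondence of Theorem \ref{thm1}, is the technical heart of the argument; once it is in hand, the remainder is the standard theory of affine field extensions read through the functor $D$.
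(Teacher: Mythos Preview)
Your proposal is correct and follows the natural approach the paper implicitly intends; indeed, the paper provides no proof for this corollary at all, treating it as a summarizing observation drawn from Corollary~\ref{maincor}, Proposition~\ref{prop_cancellative_supplement}, Proposition~\ref{algebraic_prop}, and the surrounding remarks. Your tower decomposition and the transport-of-structure argument via the difference-ring functor $D(-)$ simply make explicit the inductive passage from simple to affine extensions that the paper leaves to the reader, and the technical point you flag---that $D(\mathbb{H}_{i-1}(a_i)) = D(\mathbb{H}_{i-1})(a_i)$ and that $D(\mathbb{H}_i)$ remains a field at each stage---is exactly the content one must check, using Proposition~\ref{prop_cancelative_ext_stracture}(1) and the computation inside Corollary~\ref{maincor}.
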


\ \\
\section{Uniform layered extensions} \label{section:UniformExtensions} \ \\


In this section, we consider uniform layered domains with a cancellative sorting semiring. Nevertheless, the theory developed below applies to any sorting semiring.\\

\begin{defn}\label{defn_uniform_layered_domain}
A uniform layered domain or a uniform $\mathcal{L}$-layered domain is the set of pairs $\mathbb{D} = \{ ^{[l]}a \ : \ l \in \mathcal{L}, \ a \in \mathcal{G} \}$
where $\mathcal{G}$ is an ordered semiring which is a domain and $\mathcal{L}$ is partially pre-ordered semiring without zero. We call $\mathcal{G}$ the `semiring of values' while $\mathcal{L}$ is called the `sorting semiring'.
We write $\mathcal{G}(\mathbb{D})$ and $\mathcal{L}_{\mathbb{D}}$ to indicate the semiring of values and the sorting semiring of the uniform layered domain $\mathbb{D}$.
The multiplication on $\mathbb{D}$ is defined componentwise, i.e.,
$$^{[k]}a ^{[l]}b = ^{[kl]}ab,$$
and addition by the rules
\begin{equation*}\label{eq_1_defn_tensor_strucature}
^{[k]}a ^{[l]}b = \begin{cases}
      ^{[k]}a   &  a > b; \\
      ^{[l]}b   & a < b; \\
      ^{[k+l]}b   & a = b.
      \end{cases}
\end{equation*}
With respect to these operations $\mathbb{D}$ is a semiring.\\
Define the sorting map $s: \mathbb{D} \rightarrow \mathcal{L}_{\mathbb{D}}$ by $$ s(^{[l]}a) = l$$
and the ghost map (we also call the value map) $\nu : \mathbb{D} \rightarrow \mathcal{G}(\mathbb{D})$ by $$\nu (^{[l]}a) = a.$$
The sorting map, $s$, is onto $\mathcal{L}_{\mathbb{D}}$ and admits $s(1_{\mathbb{D}}) = 1_{\mathcal{L}}$ and  $s(ab) = s(a) s(b)$ for all $a, b \in \mathbb{D}$.
The ghost map, $\nu$, is a semiring epimorphism of $\mathbb{D}$ onto $\mathcal{G}(\mathbb{D})$.
\end{defn}

\begin{nota}
We also denote the uniform layered domain $\mathbb{D}$ by $\mathcal{L} \odot \mathcal{G}$ where $\mathcal{L} = \mathcal{L}_{\mathbb{D}}$ and $\mathcal{G} = \mathcal{G}(\mathbb{D})$, and an element $^{[l]}a \in \mathbb{D}$ by $l \odot a$ where $l \in \mathcal{L}$ and $a \in \mathcal{G}$.
\end{nota}

\ \\

\ \\

In Definition \ref{defn_uniform_layered_domain}, we actually introduce a generic construction of a uniform layered domain which we present in the subsequent Theorem \ref{thm_uniform_decomposition}. For a more general definition and additional details regarding the construction, we refer the reader to \cite{Layered}.

\begin{thm}\label{thm_uniform_decomposition} \cite{Layered}
Let $\mathbb{H}$ be a uniform $\mathcal{L}_{\mathbb{H}}$-layered domain.
Then $$\mathbb{H} \cong s(\mathbb{H}) \odot \nu(\mathbb{H})= \mathcal{L}_{\mathbb{H}} \odot \mathcal{G}(\mathbb{H}).$$
\end{thm}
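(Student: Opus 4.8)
The plan is to unwind the definitions and exhibit the isomorphism explicitly. Since $\mathbb{H}$ is a uniform $\mathcal{L}_{\mathbb{H}}$-layered domain, by Definition \ref{defn_uniform_layered_domain} every element of $\mathbb{H}$ is of the form ${}^{[l]}a$ with $l \in \mathcal{L}_{\mathbb{H}}$ and $a \in \mathcal{G}(\mathbb{H})$, and conversely each such pair names an element of $\mathbb{H}$. Now form the uniform layered domain $\mathcal{L}_{\mathbb{H}} \odot \mathcal{G}(\mathbb{H})$ by the generic construction of the same definition: its underlying set is precisely $\{\, l \odot a : l \in \mathcal{L}_{\mathbb{H}},\ a \in \mathcal{G}(\mathbb{H})\,\}$, with multiplication and addition given by the stated componentwise rules. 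Define $\Phi : \mathbb{H} \to \mathcal{L}_{\mathbb{H}} \odot \mathcal{G}(\mathbb{H})$ by $\Phi({}^{[l]}a) = s({}^{[l]}a) \odot \nu({}^{[l]}a) = l \odot a$.

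First I would check that $\Phi$ is well-defined and bijective: this is immediate because in both structures an element is completely determined by its pair of coordinates $(s\text{-value}, \nu\text{-value})$, so $\Phi$ is just the identity on pairs. Next I would verify that $\Phi$ is a semiring homomorphism. For multiplication, $\Phi({}^{[k]}a \cdot {}^{[l]}b) = \Phi({}^{[kl]}ab) = (kl)\odot(ab)$, which by the multiplication rule in $\mathcal{L}_{\mathbb{H}} \odot \mathcal{G}(\mathbb{H})$ equals $(k\odot a)(l\odot b) = \Phi({}^{[k]}a)\Phi({}^{[l]}b)$. For addition, one does a three-way case split according to whether $a > b$, $a < b$, or $a = b$ in the ordered semiring $\mathcal{G}(\mathbb{H})$; in each case the addition rule defining $+$ on $\mathbb{H}$ and the (identical) rule defining $+$ on $\mathcal{L}_{\mathbb{H}} \odot \mathcal{G}(\mathbb{H})$ produce matching pairs, so $\Phi({}^{[k]}a + {}^{[l]}b) = \Phi({}^{[k]}a) + \Phi({}^{[l]}b)$. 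Finally, $\Phi$ sends $0_{\mathbb{H}}$ to $0$ and $1_{\mathbb{H}}$ to $1_{\mathcal{L}}\odot 1_{\mathcal{G}}$, using the recorded facts that $s(1_{\mathbb{H}}) = 1_{\mathcal{L}}$ and that $\nu$ is a semiring epimorphism (hence preserves $0$ and $1$).

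Honestly, there is no real obstacle here: the content of the theorem is essentially bookkeeping, asserting that the abstract data $(\mathcal{L}_{\mathbb{H}}, \mathcal{G}(\mathbb{H}))$ extracted from $\mathbb{H}$ via the maps $s$ and $\nu$ is enough to reconstruct $\mathbb{H}$ up to isomorphism via the generic construction. The only point requiring the slightest care is confirming that the sorting and ghost maps are jointly injective on $\mathbb{H}$ — i.e. that an element is genuinely encoded by the pair $({}^{[l]}a) \mapsto (l,a)$ with no collapsing — which is built into the notation ${}^{[l]}a$ of Definition \ref{defn_uniform_layered_domain}, and checking that the case analysis in the additivity verification is exhaustive, which it is because $\mathcal{G}(\mathbb{H})$ is an ordered semiring so any two values are comparable. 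Thus $\Phi$ is the desired semiring isomorphism, establishing $\mathbb{H} \cong s(\mathbb{H}) \odot \nu(\mathbb{H}) = \mathcal{L}_{\mathbb{H}} \odot \mathcal{G}(\mathbb{H})$.
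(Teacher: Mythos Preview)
Your proposal is correct and follows essentially the same approach as the paper: both define the map $\Phi(a) = s(a) \odot \nu(a)$ and assert it is a semiring isomorphism. The only difference is that the paper defers the verification to Theorem~3.21 of \cite{Layered}, whereas you unwind the definitions and check bijectivity and the homomorphism properties directly; your explicit case analysis is exactly what that cited result encodes.
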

\begin{proof}
Define the map $\Phi : \mathbb{H} \rightarrow \mathcal{L}_{\mathbb{H}} \odot \mathcal{G}(\mathbb{H})$ by $\Phi(a) = s(a) \odot  \nu(a)$ for any $a \in \mathbb{H}$. Then by  Theorem 3.21 of \cite{Layered}, $\Phi$ is a semiring isomorphism.
\end{proof}

\begin{rem}
Let $\mathbb{D}$ be a uniform $\mathcal{L}_{\mathbb{D}}$-layered domain, and let $a \in \mathbb{D}$ be an element of $\mathbb{D}$. Then $s(a)\in \mathcal{L}_{\mathbb{D}}$ is said to be the \emph{layer} of $a$ and  $\nu(a)$ the \emph{$\nu$-value} or \emph{ghost value} of $a$. We also write $\nu(a)$ for the element $^{[1]}\nu(a) \in \mathbb{D}$. The distinction between the cases will be clear from the context in which it appears. Since $\mathbb{D}$ is uniform we have that $^{[s(a)]}1 \in \mathbb{D}$ where $1$ denotes the identity element with respect to multiplication. Thus we can write $a$ as $a = \ ^{[s(a)]}1 \cdot \nu(a)$. Notice that $$^{[s]}1 \cdot (\nu(a)+\nu(b)) = \ ^{[s]}1 \cdot \nu(a) + \ ^{[s]}1 \cdot \nu(b) , \ \ \
(\ ^{[s]}1 + \ ^{[t]}1)  \cdot \nu(a) = \ ^{[s]}1 \cdot \nu(a) + \ ^{[t]}1  \cdot \nu(a).$$
Indeed, the right equality is straightforward since
$$(^{[s]}1 + \ ^{[t]}1)  \cdot \nu(a) = \ ^{[s+t]}1 \cdot \nu(a) =  ^{[s]}1 \cdot \nu(a) + \ ^{[t]}1  \cdot \nu(a).$$
For the left equality, if $\nu(a) > \nu(b)$, then also $^{[s]}1 \cdot \nu(a) > \ ^{[s]}1 \cdot \nu(b)$ yielding that
$^{[s]}1 \cdot (\nu(a)+\nu(b)) = \ ^{[s]}1 \cdot (\nu(a)) = \ ^{[s]}1 \cdot \nu(a) + \ ^{[s]}1 \cdot \nu(b).$
Otherwise, if $\nu(a) = \nu(b)$ then also $^{[s]}1 \cdot \nu(a) > \ ^{[s]}1 \cdot \nu(b)$ and thus
$$^{[s]}1 \cdot (\nu(a)+\nu(b)) = \ ^{[s]}1 \cdot (^{[2]}\nu(a)) = \ ^{[2s]}1 \cdot (\nu(a)) = \ ^{[s]}1 \cdot \nu(a)+ \ ^{[s]}1 \cdot \nu(b).$$
\end{rem}
%

A result introduced in \cite{Layered} is

\begin{prop}\label{L_G_decomp}
A uniform pre-domain $\mathbb{H}$ is a uniform semifield if and only if $\mathcal{G}(\mathbb{H})$ and $\mathcal{L}_{\mathbb{H}}$ are both semifields.
\end{prop}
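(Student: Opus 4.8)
The plan is to use the explicit description of $\mathbb{H}$ furnished by Theorem~\ref{thm_uniform_decomposition}, which lets us replace $\mathbb{H}$ by $\mathcal{L}_{\mathbb{H}} \odot \mathcal{G}(\mathbb{H})$ and compute directly with the componentwise multiplication. The key observation, which I would record first, is a characterization of the invertible elements of $\mathbb{H}$: since $^{[k]}a \cdot {}^{[l]}b = {}^{[kl]}ab$ and the multiplicative identity of $\mathbb{H}$ is $^{[1_{\mathcal{L}}]}1_{\mathcal{G}}$, a nonzero element $^{[l]}a$ of $\mathbb{H}$ has a multiplicative inverse in $\mathbb{H}$ if and only if $l$ is invertible in $\mathcal{L}_{\mathbb{H}}$ and $a$ is invertible in $\mathcal{G}(\mathbb{H})$, in which case the inverse is $^{[l^{-1}]}a^{-1}$. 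Both directions of the proposition then reduce to this equivalence, together with the fact that the nonzero elements of $\mathbb{H}$ are precisely those $^{[l]}a$ with $a \neq 0$ in $\mathcal{G}(\mathbb{H})$, the sorting semiring $\mathcal{L}_{\mathbb{H}}$ carrying no zero.

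For the forward implication, assume $\mathbb{H}$ is a uniform semifield. Given any nonzero $a \in \mathcal{G}(\mathbb{H})$, the element $^{[1_{\mathcal{L}}]}a \in \mathbb{H}$ is nonzero, hence invertible, so by the characterization $a$ is invertible in $\mathcal{G}(\mathbb{H})$; thus $\mathcal{G}(\mathbb{H})$ is a semifield. Likewise, for any $l \in \mathcal{L}_{\mathbb{H}}$ the element $^{[l]}1_{\mathcal{G}}$ is nonzero (since $1_{\mathcal{G}} \neq 0$), hence invertible, so $l$ is invertible in $\mathcal{L}_{\mathbb{H}}$; as $\mathcal{L}_{\mathbb{H}}$ has no zero, this is exactly the statement that $\mathcal{L}_{\mathbb{H}}$ is a semifield.

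For the reverse implication, assume $\mathcal{G}(\mathbb{H})$ and $\mathcal{L}_{\mathbb{H}}$ are semifields. Then $\mathbb{H}$ is already a semiring with $1_{\mathbb{H}} = {}^{[1_{\mathcal{L}}]}1_{\mathcal{G}} \neq 0_{\mathbb{H}}$, so it remains only to invert an arbitrary nonzero element $^{[l]}a$ of $\mathbb{H}$. Here $a \neq 0$ is invertible in $\mathcal{G}(\mathbb{H})$, and $l$ is invertible in $\mathcal{L}_{\mathbb{H}}$ since every element of the semifield $\mathcal{L}_{\mathbb{H}}$ is; hence $^{[l^{-1}]}a^{-1}$ is a two-sided multiplicative inverse by the componentwise multiplication rule, and $\mathbb{H}$ is a uniform semifield.

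I do not expect a genuine obstacle here; the only points requiring care are bookkeeping ones. First, one must confirm that the nonzero elements of $\mathbb{H}$ are exactly the $^{[l]}a$ with $a \neq 0$, so that the invertibility characterization is applied to the correct set. Second, one must note that "$\mathcal{L}_{\mathbb{H}}$ is a semifield" for the zeroless sorting semiring is synonymous with "$\mathcal{L}_{\mathbb{H}}$ is an abelian group under multiplication," which is precisely where the hypothesis that $\mathcal{L}$ has no zero enters. Everything else follows immediately from the componentwise multiplication in $\mathcal{L}_{\mathbb{H}} \odot \mathcal{G}(\mathbb{H})$.
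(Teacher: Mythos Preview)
Your argument is correct: the componentwise multiplication in $\mathcal{L}_{\mathbb{H}} \odot \mathcal{G}(\mathbb{H})$ immediately gives the characterization of invertible elements, and both implications follow. Note that the paper does not actually supply a proof of this proposition; it is quoted as a result from \cite{Layered}, so there is no in-paper argument to compare against, and your direct verification is exactly the natural one.
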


\begin{rem}
Note that the $+$ and $\cdot$ operations of $\mathbb{H}$ induce the $max$ and the classic addition operations on $\mathcal{G}(\mathbb{H})$, while the operations on $\mathcal{L}_{\mathbb{H}}$ (restricting to any given $\nu$-value) are the classic addition and multiplication, respectively. $\mathbb{H}$ is said to be an \emph{1-semifield} if we only require $\mathcal{G}(\mathbb{H})$ to be a semifield.
\end{rem}
\bigbreak

\begin{rem}
Let $\mathbb{H}$ be an uniform $\mathcal{L}_{\mathbb{H}}$-layered domain, where $\mathcal{L}_{\mathbb{H}}$ is a semifield.
Let $\mathbb{D}$ be an uniform $\mathcal{L}_{\mathbb{D}}$-layered domain extending $\mathbb{H}$ with $\mathcal{L}_{\mathbb{D}}$ the sorting domain of $\mathbb{D}$. Since $\mathbb{D}$ contains $\mathbb{H}$ we have that $\mathcal{L}_{\mathbb{D}} \supseteq \mathcal{L}_{\mathbb{H}}$.
\end{rem}

more generally we can say that

\begin{defn}
We define a uniform sub-domain $\mathbb{E}$ of a layered domain $\mathbb{D}$ to be a domain of the form $\mathcal{L}_{\mathbb{E}} \odot \mathcal{G}(\mathbb{E})$, where $\mathcal{L}_{\mathbb{E}} \subseteq \mathcal{L}_{\mathbb{D}}$ and $\mathcal{G}(\mathbb{E}) \subseteq \mathcal{G}({\mathbb{D}})$.
\end{defn}

\begin{note}
Throughout this section, when not stated otherwise, $\mathbb{H}$ will always denote a uniform $\mathcal{L}_{\mathbb{H}}$-layered semifield, and $\mathbb{D}$ will always denote a uniform $\mathcal{L}_{\mathbb{D}}$-layered domain extending $\mathbb{H}$.
\end{note}
\begin{rem}
Let $f(x) \in \mathbb{H}[x]$ and let $\mathbb{D}$ be a layered domain extending $\mathbb{H}$. Then, for any  $a \in \mathbb{D}$
\begin{equation}\label{eq5}
\nu(f(a)) = \nu(f(\nu(a))).
\end{equation}
\end{rem}
\begin{proof}
A  straightforward consequence of of Definition \ref{defn_uniform_layered_domain}.
\end{proof}

\begin{rem}
Let $f(x) \in \mathbb{H}[x]$. Write $f(x) = \sum_{i=0}^{m}\alpha_i x^i$ where $\alpha_i \in \mathbb{H}$. Then for any $a \in \mathbb{D}$ the following hold:
\begin{equation}\label{eq3}
s(f(a))  \in \mathcal{L}_{\mathbb{H}}[s(a)] = \left\{g(s(a)) \ : \ g \in  \mathcal{L}_{\mathbb{H}}[x] \right\}.
\end{equation}
\begin{equation}\label{eq4}
f(a) = f(s(a) \odot \nu(a)) = s(f(a)) \odot \nu(f(a)).
\end{equation}
\end{rem}
\begin{proof}
For the first equality, (\ref{eq3}), since
$$s(f(a)) = s\bigg(\sum_{i=0}^{m}\alpha_i a^i\bigg) = \sum_{j \in J}s(\alpha_j) s(a)^i = \tilde{f}\big(s(a)\big).$$
Here $J \subseteq \{0,...,m\}$ corresponds to the set of indices of dominant (essential) terms of $\sum_{i=0}^{m}\alpha_i a^i$, where $\nu(\alpha_{j_1} a^{j_1}) = \nu(\alpha_{j_2}a^{j_2})$ for any pair of essential terms indexed by $j_1$ and $j_2$ in $J$. $\tilde{f} \in \mathcal{L}_{\mathbb{H}}[x]$ is a polynomial with coefficients in $\mathcal{L}_{\mathbb{H}}$, determined by the dominant (essential) part of $f$. So, we have that $s(f(a)) \in \mathcal{L}_{\mathbb{H}}[s(a)]$ as desired.\\ The second equality, (\ref{eq4}), is a direct consequence of Definition \ref{defn_uniform_layered_domain}.
\end{proof}
\bigbreak

%
\begin{defn}
Let $\mathbb{D}$ be a layered domain and let $E \subset \mathbb{D}$. Define $\mathscr{U}_{\mathbb{D}}(E)$ to be a minimal (with respect to inclusion) uniform layered domain $\mathbb{E}$ such that $E \subseteq \mathbb{E} \subseteq \mathbb{D}$, if such a minimum exists. In case it is defined and unique up to isomorphism,   $\mathscr{U}_{\mathbb{D}}(E)$ is said to be the \emph{uniform closure} of $E$ in $\mathbb{D}$.
\end{defn}
\bigbreak

\begin{defn}
Let  $\mathbb{D}$ a layered domain extending a layered semifield $\mathbb{H}$. Let $a \in \mathbb{D}$ be an element of $\mathbb{D}$. Define $$\mathbb{H}[a]= \left\{ f(a) \ : \ f \in \mathbb{H}[x] \right\}.$$
\end{defn}

\bigbreak

\begin{rem}
By equation \eqref{eq4} we have that
$$\mathbb{H}[a] = \left\{s(f(a)) \odot \nu(f(a)) \ : \ f \in \mathbb{H}[x] \right\}.$$
\end{rem}

\bigbreak

\begin{prop}\label{layering_ext_prop}
Let $\mathbb{H}[a]= \left\{ f(a) \ : \ f \in \mathbb{H}[x] \right\}$ where $a \in \mathbb{D}$ such that $\nu(a)~\in~\mathcal{G}(\mathbb{H})$. Then
$\mathscr{U}_{\mathbb{D}}\left(\mathbb{H}[a]\right) = \mathbb{H}[a] =  \mathcal{L}_\mathbb{H}[s(a)] \odot \mathcal{G}(\mathbb{H})$.
\end{prop}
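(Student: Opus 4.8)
The plan is to prove the three claims in the equality chain $\mathscr{U}_{\mathbb{D}}\left(\mathbb{H}[a]\right) = \mathbb{H}[a] = \mathcal{L}_{\mathbb{H}}[s(a)] \odot \mathcal{G}(\mathbb{H})$ by establishing first that $\mathbb{H}[a]$ is already a uniform layered domain of the stated form, and then that it is the minimal one containing itself (which is automatic once it is a uniform layered domain, since $\mathbb{H}[a] \subseteq \mathbb{H}[a]$). So the real content is the second equality, $\mathbb{H}[a] = \mathcal{L}_{\mathbb{H}}[s(a)] \odot \mathcal{G}(\mathbb{H})$.

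\textbf{Step 1: $\mathbb{H}[a] \subseteq \mathcal{L}_{\mathbb{H}}[s(a)] \odot \mathcal{G}(\mathbb{H})$.} Take an arbitrary element $f(a) \in \mathbb{H}[a]$ with $f \in \mathbb{H}[x]$. By equation \eqref{eq4}, $f(a) = s(f(a)) \odot \nu(f(a))$. By \eqref{eq3}, $s(f(a)) \in \mathcal{L}_{\mathbb{H}}[s(a)]$. By \eqref{eq5}, $\nu(f(a)) = \nu(f(\nu(a)))$, and since $\nu(a) \in \mathcal{G}(\mathbb{H})$ by hypothesis and $f$ has coefficients in $\mathbb{H}$, we have $f(\nu(a)) \in \mathbb{H}$ (viewing the value computation inside $\mathcal{G}(\mathbb{H})$, which is closed under its $\max$ and $+$ operations), so $\nu(f(a)) \in \mathcal{G}(\mathbb{H})$. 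Hence $f(a) \in \mathcal{L}_{\mathbb{H}}[s(a)] \odot \mathcal{G}(\mathbb{H})$.

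\textbf{Step 2: the reverse inclusion $\mathcal{L}_{\mathbb{H}}[s(a)] \odot \mathcal{G}(\mathbb{H}) \subseteq \mathbb{H}[a]$.} A general element here is $g(s(a)) \odot b$ with $g \in \mathcal{L}_{\mathbb{H}}[x]$ and $b \in \mathcal{G}(\mathbb{H})$. First note that, using uniformity, for any layer $l \in \mathcal{L}_{\mathbb{H}}$ we have $^{[l]}1 \in \mathbb{H} \subseteq \mathbb{H}[a]$, and $a = {}^{[s(a)]}1 \cdot \nu(a)$, so both $^{[s(a)]}1$ and $\nu(a)$ lie in $\mathbb{H}[a]$ (the latter because $\nu(a) \in \mathcal{G}(\mathbb{H}) \subseteq \mathbb{H}$). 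I would then show that every element of the form $g(s(a)) \odot b$ with $g$ a \emph{monomial} is realized as $f(a)$ for a suitable $f$: if $g(x) = {}^{[k]}c\, x^j$, take $f(x) = {}^{[k]}(c\, b\, \nu(a)^{-j})\, x^j$, so that $f(a) = {}^{[k]}(c\, b\, \nu(a)^{-j}) \cdot ({}^{[s(a)^j]}1 \cdot \nu(a)^j) = {}^{[k\, s(a)^j]}(c\, b) = (g(s(a))) \odot b$ — here $\nu(a)^{-1}$ exists since $\mathbb{H}$ is a semifield and $\nu(a) \in \mathcal{G}(\mathbb{H})$. For a general $g = \sum g_i$ with monomials $g_i$, observe that in the bipotent value layer all summands $f_i(a)$ share the same $\nu$-value $b$, so their sum is $(\sum_i g_i(s(a))) \odot b = g(s(a)) \odot b$ by the additivity identities for $^{[s]}1$ noted in the remark after Theorem \ref{thm_uniform_decomposition}. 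Thus $g(s(a)) \odot b = f(a) \in \mathbb{H}[a]$ where $f = \sum f_i$.

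\textbf{Step 3: conclude.} Combining Steps 1 and 2 gives $\mathbb{H}[a] = \mathcal{L}_{\mathbb{H}}[s(a)] \odot \mathcal{G}(\mathbb{H})$. The right-hand side is visibly of the form $\mathcal{L}_{\mathbb{E}} \odot \mathcal{G}(\mathbb{E})$ with $\mathcal{L}_{\mathbb{E}} = \mathcal{L}_{\mathbb{H}}[s(a)] \subseteq \mathcal{L}_{\mathbb{D}}$ (using \eqref{eq3}) and $\mathcal{G}(\mathbb{E}) = \mathcal{G}(\mathbb{H}) \subseteq \mathcal{G}(\mathbb{D})$, hence is a uniform layered sub-domain of $\mathbb{D}$ by Theorem \ref{thm_uniform_decomposition}; I should check it is closed under the semiring operations, which follows because $\mathcal{L}_{\mathbb{H}}[s(a)]$ is a sub-semiring of $\mathcal{L}_{\mathbb{D}}$ and $\mathcal{G}(\mathbb{H})$ a sub-semiring of $\mathcal{G}(\mathbb{D})$, and the operations on a uniform layered domain are determined componentwise/by the value order. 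Since any uniform layered domain $\mathbb{E}$ with $\mathbb{H}[a] \subseteq \mathbb{E} \subseteq \mathbb{D}$ must contain $\mathbb{H}[a]$ itself, and $\mathbb{H}[a]$ is such an $\mathbb{E}$, it is the minimum; therefore $\mathscr{U}_{\mathbb{D}}(\mathbb{H}[a]) = \mathbb{H}[a]$.

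\textbf{Main obstacle.} The delicate point is Step 2: showing that an \emph{arbitrary} layer polynomial $g \in \mathcal{L}_{\mathbb{H}}[s(a)]$ — including ones with several monomials in $x$, possibly of differing degrees — together with an arbitrary value $b \in \mathcal{G}(\mathbb{H})$, can be hit by a single $f \in \mathbb{H}[x]$. The subtlety is that when $f = \sum f_i$ with the $f_i$ of differing degrees, the terms $f_i(a)$ have $\nu$-values that are powers of $\nu(a)$ times constants, and one must arrange all of them to equal the \emph{same} value $b$ so that the bipotent addition collapses correctly and the layers add; this is exactly where invertibility of $\nu(a)$ in the semifield $\mathbb{H}$ and the left/right distributivity identities for $^{[s]}1$ are used. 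Care is also needed that $g(s(a))$ genuinely depends only on the essential (dominant) part, matching the description of $s(f(a))$ in the proof of \eqref{eq3}, so that the correspondence $g \leftrightarrow f$ is consistent.
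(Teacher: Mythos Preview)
Your argument is correct and follows essentially the same route as the paper: both directions of the set equality are proved exactly as you do, with the key construction for the reverse inclusion being the polynomial $f(x)=\sum_i {}^{[s_i]}\bigl(b\,\nu(a)^{-i}\bigr)x^i$ that forces all terms to share the common $\nu$-value $b$ so their layers add to $g(s(a))$. One small notational slip: in Step~2 you write a monomial of $g\in\mathcal{L}_{\mathbb{H}}[x]$ as ${}^{[k]}c\,x^j$, but its coefficient is simply an element $l\in\mathcal{L}_{\mathbb{H}}$, not a layered pair; with $g(x)=l\,x^j$ your $f$ should be ${}^{[l]}\bigl(b\,\nu(a)^{-j}\bigr)x^j$, and then the argument goes through verbatim.
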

\begin{proof}
By equations \eqref{eq5} and \eqref{eq4} we have that
$$f(a) = f(s(a) \odot \nu(a)) = s(f(a)) \odot \nu(f(a)) =  s(f(a)) \odot \nu(f(\nu(a))).$$ Now, $\nu(a) \in \mathcal{G}(\mathbb{H})$ thus $f(\nu(a)) \in \mathbb{H}$ which yields that $\nu(f(\nu(a))) \in  \mathcal{G}(\mathbb{H})$. By \eqref{eq3} we have that $s(f(a))\in \mathcal{L}_{\mathbb{H}}[s(a)]$. Thus $\mathbb{H}[a] \subseteq \mathcal{L}_\mathbb{H}[s(a)] \odot \mathcal{G}(\mathbb{H})$.\\
Let $g(s(a)) \odot \nu(\alpha) \in \mathcal{L}_\mathbb{H}[s(a)] \odot \mathcal{G}(\mathbb{H})$. Write $g(x) =\sum_{i=0}^{m}s_ix^i$ with $s_i \in \mathcal{L}_{\mathbb{H}}$ and then
define $f(x) = \sum_{i=0}^{m}{^{[s_i]}\left(\nu(\alpha)\nu(a)^{\ -i}\right) x^i}$. Since $\nu(\alpha)$ and $\nu(a)$ are in $\mathcal{G}(\mathbb{H})$, we have that $\nu(\alpha)\nu(a)^{\ -i} \in \mathcal{G}(\mathbb{H})$ for each $i = 0,...,m$ and thus $^{[s_i]}\nu(\alpha)\nu(a)^{\ -i} \in \mathbb{H}$. So $f(x)\in \mathbb{H}[x]$.
Now,
$$\nu\left(^{[s_i]}\left(\nu(\alpha)\nu(a)^{\ -i}\right)a^{i}\right)= \nu(\alpha)\nu(\nu(a)^{\ -i})\nu(a^{i}) = \nu(\alpha)\nu(a^{\ -i})\nu(a^{i}) =\nu(\alpha)$$
for every $i = 0,...,m$. So the following equalities hold:

\begin{align*}
\nu(f(a)) \ = & \nu\left(\sum_{i=0}^{m}{ ^{[s_i]}\left(\nu(\alpha)\nu(a)^{\ -i}\right) a^i}\right) = \nu\left(\sum_{i=0}^{m}\nu\Big(^{[s_i]}\left(\nu(\alpha)\nu(a)^{\ -i}\right)a^i\Big)\right)\\
\ = & \nu\left(\sum_{i=0}^{m}\nu(\alpha)\right)=\nu(\alpha) \end{align*}

\begin{align*} s(f(a)) \ = & s\left(\sum_{i=0}^{m}{^{[s_i]}\left(\nu(\alpha)\nu(a)^{\ -i}\right) a^i}\right)= \sum_{i=0}^{m}s\left(^{[s_i]}\left(\nu(\alpha)\nu(a)^{\ -i}\right)\right)s(a)^i\\
 \ = & \sum_{i=0}^{m}s_i s(a)^i= g(s(a)).\end{align*}
Thus $g(s(a)) \odot \nu(\alpha) = s(f(a)) \odot \nu(f(a))= f(a)\in \mathbb{H}[a]$. \\
Finally, since $\mathbb{H}[a] = \mathcal{L}_\mathbb{H}[s(a)] \odot \mathcal{G}(\mathbb{H})$ is uniform, we have that
$\mathscr{U}_{\mathbb{D}}(\mathbb{H}[a]) = \mathbb{H}[a]$.
\end{proof}

\begin{prop}\label{nu_ext_prop}
Let \ $\mathbb{H}[a]= \{ f(a) \ : \ f \in \mathbb{H}[x] \}$ where $a \in \mathbb{D}$ is such that $s(a)~\in~\mathcal{L}_{\mathbb{H}}$. Then
$\mathscr{U}_{\mathbb{D}}(\mathbb{H}[a]) = \mathbb{H}[a] = \mathcal{L}_\mathbb{H} \odot \mathcal{G}(\mathbb{H}[\nu(a)])$.
\end{prop}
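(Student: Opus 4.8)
The plan is to mirror the structure of the proof of Proposition \ref{layering_ext_prop}, with the roles of the sorting and value components interchanged. Concretely, I would first establish the inclusion $\mathbb{H}[a] \subseteq \mathcal{L}_{\mathbb{H}} \odot \mathcal{G}(\mathbb{H}[\nu(a)])$ by taking an arbitrary $f(a)$ with $f \in \mathbb{H}[x]$ and writing $f(a) = s(f(a)) \odot \nu(f(a))$ via \eqref{eq4}. The hypothesis $s(a) \in \mathcal{L}_{\mathbb{H}}$ combined with \eqref{eq3} forces $s(f(a)) \in \mathcal{L}_{\mathbb{H}}[s(a)] = \mathcal{L}_{\mathbb{H}}$ (since $s(a)$ already lies in the semifield $\mathcal{L}_{\mathbb{H}}$, polynomials in it over $\mathcal{L}_{\mathbb{H}}$ stay in $\mathcal{L}_{\mathbb{H}}$), while \eqref{eq5} gives $\nu(f(a)) = \nu(f(\nu(a))) \in \mathcal{G}(\mathbb{H}[\nu(a)])$ because $f(\nu(a)) \in \mathbb{H}[\nu(a)]$. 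This handles one containment essentially for free.

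The reverse inclusion $\mathcal{L}_{\mathbb{H}} \odot \mathcal{G}(\mathbb{H}[\nu(a)]) \subseteq \mathbb{H}[a]$ is the substantive part. Given a typical element $t \odot \nu(g(\nu(a)))$ with $t \in \mathcal{L}_{\mathbb{H}}$ and $g(x) = \sum_i \alpha_i x^i \in \mathbb{H}[x]$, I want to exhibit an explicit $h \in \mathbb{H}[x]$ with $h(a) = t \odot \nu(g(\nu(a)))$. The natural candidate is to adjust the layers of the coefficients of $g$: set $h(x) = \sum_i {}^{[t \cdot s_i']}\bigl(\nu(\alpha_i)\nu(a)^{0}\bigr) x^i$ after first controlling the layer $s(g(a))$ — more carefully, one writes $g(a) = s(g(a)) \odot \nu(g(\nu(a)))$ and then multiplies $g$ by a suitable layer scalar ${}^{[t']}1 \in \mathbb{H}$ (which lies in $\mathbb{H}$ by uniformity) so that the resulting polynomial evaluated at $a$ has sorting value exactly $t$. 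The point is that $s(g(a)) \in \mathcal{L}_{\mathbb{H}}$ is invertible (as $\mathcal{L}_{\mathbb{H}}$ is a semifield, by Proposition \ref{L_G_decomp}), so $t' = t \cdot s(g(a))^{-1} \in \mathcal{L}_{\mathbb{H}}$ exists, and ${}^{[t']}1 \cdot g(a)$ has sorting $t$ and $\nu$-value $\nu(g(\nu(a)))$; it remains to observe ${}^{[t']}1 \cdot g(x) \in \mathbb{H}[x]$, which is immediate since each coefficient ${}^{[t']}1 \cdot \alpha_i \in \mathbb{H}$. One subtlety to check is that a general element of $\mathcal{G}(\mathbb{H}[\nu(a)])$ has the form $\nu(g(\nu(a)))$ for some $g \in \mathbb{H}[x]$ — this is exactly the definition of $\mathbb{H}[\nu(a)]$ together with the fact that $\nu$ is a semiring epimorphism onto $\mathcal{G}$, so $\mathcal{G}(\mathbb{H}[\nu(a)]) = \nu(\mathbb{H}[\nu(a)]) = \{\nu(g(\nu(a))) : g \in \mathbb{H}[x]\}$.

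Finally, since I will have shown $\mathbb{H}[a] = \mathcal{L}_{\mathbb{H}} \odot \mathcal{G}(\mathbb{H}[\nu(a)])$ and the right-hand side is manifestly a uniform layered sub-domain of $\mathbb{D}$ (its sorting semiring is $\mathcal{L}_{\mathbb{H}} \subseteq \mathcal{L}_{\mathbb{D}}$ and its value semiring $\mathcal{G}(\mathbb{H}[\nu(a)]) \subseteq \mathcal{G}(\mathbb{D})$), it contains $\mathbb{H}[a] \ni a$ and hence equals its own uniform closure: $\mathscr{U}_{\mathbb{D}}(\mathbb{H}[a]) = \mathbb{H}[a]$. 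The main obstacle I anticipate is the layer bookkeeping in the reverse inclusion — making sure the sorting map is tracked correctly through polynomial evaluation when essential (dominant) terms collide, and verifying that the layer-scalar adjustment ${}^{[t']}1$ interacts with addition as needed; the remark following Theorem \ref{thm_uniform_decomposition} on the identities ${}^{[s]}1 \cdot (\nu(a)+\nu(b)) = {}^{[s]}1\cdot\nu(a) + {}^{[s]}1\cdot\nu(b)$ is exactly the tool that makes this go through, so I would lean on it explicitly.
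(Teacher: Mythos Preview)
Your proposal is correct and structurally matches the paper's proof: the forward inclusion via \eqref{eq3}--\eqref{eq5}, an explicit polynomial construction for the reverse inclusion, and the uniform-closure conclusion from the displayed equality. The only real difference is in how the reverse inclusion is realized: the paper first restricts $g$ to its essential monomials at $\nu(a)$ and rescales each surviving coefficient's layer by a common factor so that the layers sum to the target $s$, whereas you multiply the entire polynomial $g$ by the single layer scalar ${}^{[t\,s(g(a))^{-1}]}1$. Your route is a bit cleaner---it sidesteps the bookkeeping of dominant terms by leaning directly on invertibility of $s(g(a))$ in the semifield $\mathcal{L}_{\mathbb{H}}$---but the underlying mechanism (a layer rescaling inside $\mathbb{H}$ adjusts the sorting value without disturbing the $\nu$-value) is the same in both arguments.
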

\begin{proof}
By equations \eqref{eq5} and \eqref{eq4}, we have that
$$f(a) = f(s(a) \odot \nu(a)) = s(f(a)) \odot \nu(f(a)) =  s(f(a)) \odot \nu(f(\nu(a))).$$
Now, by \eqref{eq3}, $s(f(a))  \in \mathcal{L}_{\mathbb{H}}[s(a)]$. Since $s(a) \in \mathcal{L}_{\mathbb{H}}$ we have that $\mathcal{L}_{\mathbb{H}}[s(a)] = \mathcal{L}_{\mathbb{H}}$, thus $s(f(a))  \in \mathcal{L}_{\mathbb{H}}$. Since  $\mathbb{H}[\nu(a)]=\left\{ f(\nu(a)) \ : \ f \in \mathbb{H}[x] \right\} $, we have that $f(\nu(a)) \in \mathbb{H}[\nu(a)]$ implying that $\nu(f(\nu(a))) \in \mathcal{G}(\mathbb{H}[\nu(a)])$. Thus $\mathbb{H}[a] \subseteq \mathcal{L}_\mathbb{H} \odot \mathcal{G}(\mathbb{H}[\nu(a)])$. Conversely, let $s \odot \nu(g(\nu(a))) \in \mathcal{L}_\mathbb{H} \odot \mathcal{G}(\mathbb{H}[\nu(a)])$. Write $g = \sum_{i=0}^{n}\alpha_i x^i$ where $\alpha_i \in \mathbb{H}$ for each $i=0,...,n$. Let ${i_0,...,i_k}$, $k \leq n$ be the indices corresponding to the essential terms of $g(\nu(a))$. Define $f(x) = \sum_{j=0}^{k}{ ^{[s/(\sum{s(\alpha_{i_j})})]}\alpha_{i_j}x^{i_j}}$. Then
\begin{align*} s(f(a)) \ = & s\left( \sum_{j=0}^{k}{ ^{[s/\sum{s(\alpha_{i_j}})]}\alpha_{i_j}a^{i_j}}\right) =
\sum_{j=0}^{k} (s/\sum{s(\alpha_{i_j})}) s(\alpha_{i_j})s(a^{i_j})\\
\ = & (s/\sum{s(\alpha_{i_j})})\sum_{j=0}^{k} s(\alpha_{i_j}) = s,    \end{align*}

\begin{align*} \nu(f(a)) \ = & \nu\left( \sum_{j=0}^{k}{ ^{[s/(\sum{s(\alpha_{i_j})})]}\alpha_{i_j}a^{i_j}}\right) =
\nu\left(\sum_{j=0}^{k} \alpha_{i_j} \nu(a)^{i_j}\right) \\
= & \nu\left(\sum_{i=0}^{n} \alpha_{i}\nu(a)^{i}\right) =  \nu (g(\nu(a)).
    \end{align*}

In the first equality, we may sum up the layers of the terms since by assumption they all have the same $\nu$-value.
In the second equality, we use \eqref{eq5} in the first step \linebreak calculation.
Thus $s \odot \nu(g(\nu(a))) = s(f(a)) \odot \nu(f(a)) = f(a) \in \mathbb{H}[a]$, as required.
Finally, since $\mathbb{H}[a]=\mathcal{L}_\mathbb{H} \odot \mathcal{G}(\mathbb{H}[\nu(a)])$ is uniform, we have that
$\mathscr{U}_{\mathbb{D}}(\mathbb{H}[a]) = \mathbb{H}[a]$.
\end{proof}

\begin{rem}
For $f \in \mathbb{H}[x]$ and $a \in \mathbb{D}$, by equation \eqref{eq3}, we have that \linebreak  $\nu(f(\nu(a))) = \nu(f(a))$. Thus $\mathcal{G}(\mathbb{H}[\nu(a)])=\mathcal{G}(\mathbb{H}[a])$. Consequently, Proposition \ref{nu_ext_prop} asserts that
$\mathscr{U}_{\mathbb{D}}(\mathbb{H}[a]) = \mathbb{H}[a] = \mathcal{L}_\mathbb{H} \odot \mathcal{G}(\mathbb{H}[a])$.
\end{rem}

\bigbreak

\begin{defn}
We call the extensions introduced in Propositions \ref{layering_ext_prop} and \ref{nu_ext_prop} \emph{pure-layer} extension and \emph{pure-value} extension, respectively.
\end{defn}

By Propositions \ref{layering_ext_prop} and \ref{nu_ext_prop} we have
\begin{prop}\label{pure_ext_prop}
Pure-layer extensions and pure-value extensions are uniform extensions.
\end{prop}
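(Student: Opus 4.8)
The plan is to read the statement directly off the explicit descriptions of $\mathbb{H}[a]$ furnished by Propositions \ref{layering_ext_prop} and \ref{nu_ext_prop}, so that essentially nothing new has to be proved beyond checking that the two factors appearing there are of the right type. Recall that by Definition \ref{defn_uniform_layered_domain} a set of the form $\mathcal{L} \odot \mathcal{G}$ is a uniform layered domain precisely when $\mathcal{G}$ is an ordered semiring that is a domain and $\mathcal{L}$ is a partially pre-ordered semiring without zero; moreover, by the notion of uniform sub-domain introduced above, such an object sits inside $\mathbb{D}$ as soon as $\mathcal{L} \subseteq \mathcal{L}_{\mathbb{D}}$ and $\mathcal{G} \subseteq \mathcal{G}(\mathbb{D})$. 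So the proof splits into the two cases, and in each I would verify these conditions.

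For a pure-layer extension, Proposition \ref{layering_ext_prop} gives $\mathbb{H}[a] = \mathcal{L}_{\mathbb{H}}[s(a)] \odot \mathcal{G}(\mathbb{H})$. First I would observe that $\mathcal{G}(\mathbb{H})$ is already an ordered domain, being the semiring of values of $\mathbb{H}$. Then I would check that $\mathcal{L}_{\mathbb{H}}[s(a)]$ is an admissible sorting semiring: it is the sub-semiring of $\mathcal{L}_{\mathbb{D}}$ generated by $\mathcal{L}_{\mathbb{H}}$ together with $s(a) \in \mathcal{L}_{\mathbb{D}}$, it inherits the partial pre-order of $\mathcal{L}_{\mathbb{D}}$, and since $\mathcal{L}_{\mathbb{D}}$ has no zero neither does this sub-semiring. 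Hence $\mathcal{L}_{\mathbb{H}}[s(a)] \odot \mathcal{G}(\mathbb{H})$ is a uniform layered domain, and since $\mathcal{L}_{\mathbb{H}}[s(a)] \subseteq \mathcal{L}_{\mathbb{D}}$ and $\mathcal{G}(\mathbb{H}) \subseteq \mathcal{G}(\mathbb{D})$ it is a uniform sub-domain of $\mathbb{D}$.

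For a pure-value extension, Proposition \ref{nu_ext_prop} gives $\mathbb{H}[a] = \mathcal{L}_{\mathbb{H}} \odot \mathcal{G}(\mathbb{H}[\nu(a)])$. Here $\mathcal{L}_{\mathbb{H}}$ is the sorting semiring of the given semifield $\mathbb{H}$, hence already a partially pre-ordered semiring without zero, while $\mathcal{G}(\mathbb{H}[\nu(a)])$ is the sub-semiring of $\mathcal{G}(\mathbb{D})$ generated by $\mathcal{G}(\mathbb{H})$ and $\nu(a)$; a sub-semiring of the domain $\mathcal{G}(\mathbb{D})$ is again multiplicatively cancellative and inherits the order, so it is an ordered domain. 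Thus $\mathcal{L}_{\mathbb{H}} \odot \mathcal{G}(\mathbb{H}[\nu(a)])$ is a uniform layered domain and, as $\mathcal{L}_{\mathbb{H}} \subseteq \mathcal{L}_{\mathbb{D}}$ and $\mathcal{G}(\mathbb{H}[\nu(a)]) \subseteq \mathcal{G}(\mathbb{D})$, a uniform sub-domain of $\mathbb{D}$.

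Finally, in both cases $\mathbb{H} \subseteq \mathbb{H}[a]$, since the constant polynomial $f(x) = \alpha$ yields $f(a) = \alpha$ for every $\alpha \in \mathbb{H}$ (equivalently, the copy $\mathcal{L}_{\mathbb{H}} \odot \mathcal{G}(\mathbb{H}) \cong \mathbb{H}$ provided by Theorem \ref{thm_uniform_decomposition} embeds into each of the two displayed products). Combined with the previous two paragraphs, $\mathbb{H}[a]$ is in both cases a uniform layered domain extending $\mathbb{H}$ inside $\mathbb{D}$, i.e. a uniform extension. The only point requiring any care — and hence the ``main obstacle'', though it is slight — is checking that $\mathcal{L}_{\mathbb{H}}[s(a)]$ and $\mathcal{G}(\mathbb{H}[\nu(a)])$ genuinely satisfy the defining hypotheses of a sorting semiring and of a semiring of values respectively (absence of zero in the former, multiplicative cancellativity and an inherited order in the latter); everything else is immediate from Propositions \ref{layering_ext_prop} and \ref{nu_ext_prop}.
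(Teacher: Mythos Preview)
Your proposal is correct and follows exactly the paper's approach: the paper treats the proposition as an immediate consequence of Propositions \ref{layering_ext_prop} and \ref{nu_ext_prop}, each of which already concludes that $\mathbb{H}[a]$ equals an explicit $\mathcal{L}\odot\mathcal{G}$ and hence is uniform. You simply spell out the routine verifications (that the two factors are an admissible sorting semiring and an ordered domain, respectively) that the paper leaves implicit.
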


\begin{defn}
Let $\mathbb{H}$ be a layered semiring (not necessarily uniform). Let $\alpha \in \mathcal{G}(\mathbb{H})$ be any $\nu$-value of $\mathbb{H}$. We call the set $$L_\alpha \doteq \{s(a) \ : \  a \in \mathbb{H}, \ \nu(a) = \alpha \}$$
the \emph{layer fibre} of $\alpha$.
\end{defn}

\begin{flushleft}When considering Proposition \ref{pure_ext_prop}, the following natural question arises:\\ \end{flushleft}
\textbf{Under what conditions, is an extension of the form $\mathbb{H}[a]$, with $a \in \mathbb{D}$ and $\mathbb{H}$ a \linebreak uniform domain, a uniform extension?} \\

\begin{flushleft} In general, for an extension of the form $\mathbb{H}[a]$, the layer fibers of different $\nu$ values of $\mathbb{H}[a]$ may differ. It is even possible that $L_\alpha \neq \mathcal{L}_\mathbb{H}[s(a)]$ for any $\alpha \in \mathcal{G}(\mathbb{H}[\nu(a)])$.\end{flushleft}

\begin{rem}\label{uniform_rem}
In the notation introduced above, let $\mathbb{K}$ be a layered domain and let $\beta \in \mathcal{G}(\mathbb{K})$ be an invertible element. Then $L_\beta \subset L_\alpha$ for any $\alpha \in \mathcal{G}(\mathbb{K})$. In particular, if $\mathcal{G}(\mathbb{K})$ is a semifield, then $L_\alpha = L_\beta$ for any $\alpha, \beta \in \mathcal{G}(\mathbb{K})$.
\end{rem}
\begin{proof}
 Let $l \in L_\beta$ be any layer in the layer fiber of $\beta$. Then there exists $b \in \mathbb{K}$ such that $\nu(b) = \beta$ and $s(b) = l$. Then $\nu(\beta^{-1}b) = \nu(\beta^{-1})\nu(b) =\beta^{-1}\beta =  1_{\mathcal{G}(\mathbb{K})}$ and $s(\beta^{-1}b)=s(\beta^{-1})s(b) = 1_{\mathcal{L}_{\mathbb{K}}}s(b) = s(b)$. Now, consider the element $\alpha \beta^{-1}b \in \mathbb{K}$. We have that $\nu(\alpha \beta^{-1}b)= \nu(\alpha)\nu(\beta^{-1}b)=\alpha 1_{\mathcal{G}(\mathbb{K})}=\alpha$ and $s(\alpha \beta^{-1}b)= s(\alpha)s(b)=1_{\mathcal{L}_{\mathbb{K}}}s(b)=s(b)$. So $l=s(b) \in L_\alpha$ and thus $L_\beta \subset L_\alpha$.
\end{proof}
%

We will next characterize the simple uniform layered semifield extensions, i.e., extensions of the form $\mathbb{H}[a]$ with $\mathbb{H}$ a uniform semifield.
\bigbreak

\begin{prop}
Pure-layer and pure-value extensions of a uniform semifield $\mathbb{H}$ are the only simple uniform layered semifield extensions.
\end{prop}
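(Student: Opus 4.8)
The plan is to argue that a uniform layered semifield of the form $\mathbb{S}=\mathbb{H}[a]$ (with $a$ in some uniform layered domain $\mathbb{D}\supseteq\mathbb{H}$) is necessarily a pure-layer or a pure-value extension. Write $t=s(a)$ and $\gamma=\nu(a)$. First I record the shape of $\mathbb{S}$: by \eqref{eq5} the set of $\nu$-values occurring in $\mathbb{S}$ is $\{g(\gamma):g\in\mathcal{G}(\mathbb{H})[x]\}$, so $\mathcal{G}(\mathbb{S})=\mathcal{G}(\mathbb{H})[\gamma]$; and by \eqref{eq3} one has $\mathcal{L}_{\mathbb{S}}\subseteq\mathcal{L}_{\mathbb{H}}[t]$, while $\mathcal{L}_{\mathbb{S}}$ is in fact a sub-semiring of $\mathcal{L}_{\mathbb{D}}$ containing $\mathcal{L}_{\mathbb{H}}$ and $t$ (using uniformity of $\mathbb{S}$: for $l_1,l_2\in\mathcal{L}_{\mathbb{S}}$ the elements $\,{}^{[l_1]}1,\,{}^{[l_2]}1$ lie in $\mathbb{S}$, so $l_1l_2=s(\,{}^{[l_1]}1\cdot\,{}^{[l_2]}1)$ and $l_1+l_2=s(\,{}^{[l_1]}1+\,{}^{[l_2]}1)$ lie in $\mathcal{L}_{\mathbb{S}}$), whence $\mathcal{L}_{\mathbb{S}}=\mathcal{L}_{\mathbb{H}}[t]$. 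Since $\mathbb{S}$ is a uniform \emph{semifield}, Proposition \ref{L_G_decomp} forces both $\mathcal{G}(\mathbb{H})[\gamma]$ and $\mathcal{L}_{\mathbb{H}}[t]$ to be semifields.

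If $\gamma\in\mathcal{G}(\mathbb{H})$ we are done by Proposition \ref{layering_ext_prop}, so assume $\gamma\notin\mathcal{G}(\mathbb{H})$; I will show that then $t\in\mathcal{L}_{\mathbb{H}}$, so that Proposition \ref{nu_ext_prop} applies. Since $\mathcal{G}(\mathbb{H})[\gamma]$ is a bipotent semifield properly containing the semifield $\mathcal{G}(\mathbb{H})$, Proposition \ref{torprop1} shows $\gamma$ is $\mathcal{G}(\mathbb{H})$-torsion; put $k=\deg_{\mathcal{G}(\mathbb{H})}(\gamma)\ge 2$ and $\beta=\gamma^{k}\in\mathcal{G}(\mathbb{H})$, so that $\{1,\gamma,\dots,\gamma^{k-1}\}$ is a basis of $\mathcal{G}(\mathbb{H})[\gamma]$ over $\mathcal{G}(\mathbb{H})$ (cf. Remark \ref{torrem1}). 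The key refinement of \eqref{eq3} is: for $f=\sum_i\alpha_i x^{i}\in\mathbb{H}[x]$, if $\nu(f(a))\in\mathcal{G}(\mathbb{H})$ then $s(f(a))\in\mathcal{L}_{\mathbb{H}}[t^{k}]$. Indeed only the essential indices $i$ (those with $\nu(\alpha_i)\gamma^{i}=\nu(f(a))$) contribute to $s(f(a))$; writing $i=kq+\rho$ with $0\le\rho<k$ and $\nu(\alpha_i)\gamma^{i}=(\nu(\alpha_i)\beta^{q})\gamma^{\rho}$, comparison in the above basis with the representation $\nu(f(a))=\nu(f(a))\,\gamma^{0}$ forces $\rho=0$; hence every essential index is divisible by $k$ and $s(f(a))=\sum_{\mathrm{ess}}s(\alpha_i)t^{i}=\sum_{\mathrm{ess}}s(\alpha_i)(t^{k})^{\,i/k}\in\mathcal{L}_{\mathbb{H}}[t^{k}]$.

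Now I invoke uniformity once more: $t\in\mathcal{L}_{\mathbb{S}}$ and $1\in\mathcal{G}(\mathbb{S})$, so $\,{}^{[t]}1\in\mathbb{S}=\mathbb{H}[a]$; writing $\,{}^{[t]}1=f(a)$ we have $s(f(a))=t$ and $\nu(f(a))=1\in\mathcal{G}(\mathbb{H})$, so the refinement above yields $t\in\mathcal{L}_{\mathbb{H}}[t^{k}]$. Combined with the trivial inclusion $\mathcal{L}_{\mathbb{H}}[t^{k}]\subseteq\mathcal{L}_{\mathbb{H}}[t]$ this gives $\mathcal{L}_{\mathbb{H}}[t]=\mathcal{L}_{\mathbb{H}}[t^{k}]$, a cancellative semifield in which $t$ — being an element of a semifield — is invertible. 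The remaining, and essential, point is the \emph{rigidity statement}: a cancellative semifield extension $\mathcal{L}_{\mathbb{H}}[t]$ of $\mathcal{L}_{\mathbb{H}}$ which coincides with its $k$-th power subextension $\mathcal{L}_{\mathbb{H}}[t^{k}]$ for some $k\ge 2$ must equal $\mathcal{L}_{\mathbb{H}}$, i.e.\ $t\in\mathcal{L}_{\mathbb{H}}$. This is where the work lies and where I expect the main obstacle: writing $t=g(t^{k})$ and $t^{-1}=h(t^{k})$ with $g,h\in\mathcal{L}_{\mathbb{H}}[x]$ and multiplying gives $1=(gh)(t^{k})$; if $gh$ is constant then $g,h$ are constant (by multiplicative cancellativity of $\mathcal{L}_{\mathbb{D}}$) and $t=g(t^{k})\in\mathcal{L}_{\mathbb{H}}$, and otherwise one must show, using the structure theory of Section \ref{section:CancellativeExtensions} — Lemma \ref{field_lemma} together with Corollary \ref{maincor} and the assumption that $D(\mathcal{L}_{\mathbb{H}})$ is a field — that the only alternatives (a relation forcing $\mathcal{L}_{\mathbb{H}}[t^{k}]$ to be a field, or $t^{k}\in\mathcal{L}_{\mathbb{H}}$) either contradict the standing convention recorded in Note \ref{note1} or again collapse to $t\in\mathcal{L}_{\mathbb{H}}$. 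Once $t=s(a)\in\mathcal{L}_{\mathbb{H}}$ is established, Proposition \ref{nu_ext_prop} identifies $\mathbb{S}=\mathbb{H}[a]$ as a pure-value extension, completing the proof.
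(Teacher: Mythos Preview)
Your route and the paper's are closely related but packaged differently. The paper argues directly that when $\nu(a)\notin\mathcal{G}(\mathbb{H})$ and $s(a)\notin\mathcal{L}_{\mathbb{H}}$ the layer set $\mathcal{L}_{\mathbb{H}[a]}$ fails to be closed under addition: a sum such as $l_1+l_2\,s(a)$ would (it claims) have to arise from a polynomial with essential monomials of consecutive degree, forcing $\nu(a)\in\mathcal{G}(\mathbb{H})$. Your version is the contrapositive via layer fibres: uniformity forces the fibre over $1$ to equal all of $\mathcal{L}_{\mathbb{S}}=\mathcal{L}_{\mathbb{H}}[t]$, while your (correct) refinement of \eqref{eq3} shows this fibre lies in $\mathcal{L}_{\mathbb{H}}[t^{k}]$, hence $\mathcal{L}_{\mathbb{H}}[t]=\mathcal{L}_{\mathbb{H}}[t^{k}]$; then you invoke a ``rigidity statement'' to conclude $t\in\mathcal{L}_{\mathbb{H}}$.

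The genuine gap is exactly the rigidity statement, and you are right to flag it as the crux. Your sketched reduction to $(gh)(t^{k})=1$ does not close it: in the difference field one can certainly have $D(\mathcal{L}_{\mathbb{H}})[t]=D(\mathcal{L}_{\mathbb{H}})[t^{k}]$ with $t\notin D(\mathcal{L}_{\mathbb{H}})$ (for instance $t=1+\sqrt{2}$ over $\mathbb{Q}$ with $k=2$), so $t^{k}$ may satisfy a nontrivial relation $p(t^{k})=0$ with $p\in D(\mathcal{L}_{\mathbb{H}})[x]$ while $\mathcal{L}_{\mathbb{H}}[t^{k}]$ remains a proper semifield; thus $gh$ need not be constant and your case split does not terminate. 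It is worth noting that the paper's own argument has the mirror-image lacuna: it tacitly assumes a value of the form $l_1+l_2\,t$ in $\mathcal{L}_{\mathbb{D}}$ can only be realised as $s(f(a))$ through essential terms of degrees $0$ and $1$, ignoring the possibility that the \emph{same} sorting-semiring element equals some $\sum_j s(\alpha_j)\,t^{jk}\in\mathcal{L}_{\mathbb{H}}[t^{k}]$. In other words, both arguments reduce to ruling out $\mathcal{L}_{\mathbb{H}}[t]=\mathcal{L}_{\mathbb{H}}[t^{k}]$ for $k\ge 2$ with $t\notin\mathcal{L}_{\mathbb{H}}$, and neither supplies a proof of that; you have at least isolated the issue cleanly.
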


\begin{proof}
Let $\mathbb{H}$ be a layered semifield and $\mathbb{D}$ a layered domain extending $\mathbb{H}$. Let $a \in \mathbb{D}$.
Consider a polynomial $p(x) = \sum_{k=0}^{n}\alpha_k x^k \in \mathbb{H}[x]$, then
$$p(a)= \sum_{k=0}^{n}\alpha_k a^k  = \sum_{k=0}^{n}(s(\alpha_k)s(a)^k \odot \nu(\alpha_k)\nu(a)^k).$$
\bigbreak
First, note that by definition $\mathcal{L}_{\mathbb{H}[a]} \subseteq \mathcal{L}_{\mathbb{H}}[s(a)]$. Taking constant polynomials yields that $\mathcal{L}_{\mathbb{H}} \subset \mathcal{L}_{\mathbb{H}[a]}$. Moreover, taking monomials in $\mathbb{H}[x]$, one sees at once that
$$A = \left\{ s(\alpha) s(a)^k \ : \ \alpha \in \mathbb{H}, \ k \in \mathbb{N} \right\} =  \bigcup_{k \in \mathbb{N}} \mathcal{L}_{\mathbb{H}}\cdot s(a)^k \subset \mathcal{L}_{\mathbb{H}[a]}.$$
Now, if $s(a) \not \in \mathcal{L}_{\mathbb{H}}$, sums of elements of $A$ are not guaranteed to be in $\mathcal{L}_{\mathbb{H}[a]}$. Moreover, we claim that if $\nu(a)$ is not $\mathcal{G}(\mathbb{H})$-torsion, no proper sum of elements of $A$ is in
$\mathcal{L}_{\mathbb{H}[a]}$. Indeed, for a proper sum $s(\alpha) s(a)^{k_1} + s(\beta)s(a)^{k_2}$ with $\alpha, \beta \in \mathbb{H}$, $k_1 \neq k_2 \in \mathbb{N}$ to be in $\mathcal{L}_{\mathbb{H}[a]}$, the $\nu$-values $\nu(\alpha)\nu(a)^{k_1}$ and $\nu(\beta)\nu(a)^{k_2}$ must coincide, implying that $\nu(a)$ is $\mathcal{G}(\mathbb{H})$-torsion. We conclude that if $s(a) \not \in \mathcal{L}_{\mathbb{H}}$ and $\mathcal{L}_{\mathbb{H}[a]}  = \mathcal{L}_{\mathbb{H}}[s(a)]$ then $\nu(a)$ is
$\mathcal{G}(\mathbb{H})$-torsion. The converse, however, is not true.  If $\nu(a)$ is $\mathcal{G}(\mathbb{H})$-torsion with $r = Rank_{\mathcal{G}(\mathbb{H})}(\nu(a)) > 0$ (i.e., $\nu(a) \not \in \mathcal{G}(\mathbb{H})$) then $\mathcal{L}_{\mathbb{H}[a]} \neq \mathcal{L}_{\mathbb{H}}[s(a)]$. Indeed, reversing the arguments introduced above, one sees that for any $k_1 > k_2 + r$, $\nu(\alpha)\nu(a)^{k_1}$ and $\nu(\beta)\nu(a)^{k_2}$ cannot coincide, due to the minimality of $r$.
\end{proof}

\bigbreak

\begin{flushleft}In view of these assertions we have the following result:\end{flushleft}

\begin{cor}\label{regular_extension_cor}
Let $\mathbb{H}$ be a layered semifield and $\mathbb{D}$ a layered domain extending $\mathbb{H}$. Let $a \in \mathbb{D}$. Then $\mathcal{L}_{\mathbb{H}[a]}$ is a semiring if and only if $\mathcal{L}_{\mathbb{H}[a]} = \mathcal{L}_{\mathbb{H}}[s(a)]$, if and only if $\nu(a) \in  \mathcal{G}(\mathbb{H})$. Moreover, as  $\bigcup_{k \in \mathbb{N} \cup \{0 \}} \mathcal{L}_{\mathbb{H}} \cdot s(a)^k \subset \mathcal{L}_{\mathbb{H}[a]}$ we have that
$\mathcal{L}_{\mathbb{H}}[s(a)]$ is the minimal semiring (with respect to inclusion) containing $\mathcal{L}_{\mathbb{H}[a]}$.
\end{cor}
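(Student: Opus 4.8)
The plan is to establish the ``moreover'' assertion first, since it immediately gives the equivalence of the first two conditions, and then to tie these to the condition $\nu(a)\in\mathcal{G}(\mathbb{H})$ via Proposition~\ref{layering_ext_prop} together with the proof of the preceding proposition. From that proof I retain two facts: $\mathcal{L}_{\mathbb{H}[a]}\subseteq\mathcal{L}_{\mathbb{H}}[s(a)]$, and $\bigcup_{k\geq 0}\mathcal{L}_{\mathbb{H}}\cdot s(a)^{k}\subseteq\mathcal{L}_{\mathbb{H}[a]}$. Since $s$ is multiplicative and $\mathbb{H}[a]$ is closed under products, $\mathcal{L}_{\mathbb{H}[a]}=s(\mathbb{H}[a])$ is automatically a multiplicative submonoid of $\mathcal{L}_{\mathbb{D}}$ containing $1$, so ``$\mathcal{L}_{\mathbb{H}[a]}$ is a semiring'' amounts to ``$\mathcal{L}_{\mathbb{H}[a]}$ is closed under addition''. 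Now $\mathcal{L}_{\mathbb{H}}[s(a)]=\{\,g(s(a)):g\in\mathcal{L}_{\mathbb{H}}[x]\,\}$ is a subsemiring of $\mathcal{L}_{\mathbb{D}}$ containing $\mathcal{L}_{\mathbb{H}[a]}$; and any semiring $R$ with $\mathcal{L}_{\mathbb{H}[a]}\subseteq R$ contains every monomial $\ell\cdot s(a)^{k}$ ($\ell\in\mathcal{L}_{\mathbb{H}}$, $k\geq 0$), hence, being additively closed, every finite sum of such, i.e. all of $\mathcal{L}_{\mathbb{H}}[s(a)]$. Thus $\mathcal{L}_{\mathbb{H}}[s(a)]$ is the minimal semiring containing $\mathcal{L}_{\mathbb{H}[a]}$, and $\mathcal{L}_{\mathbb{H}[a]}$ is a semiring precisely when it equals this minimal one, i.e. precisely when $\mathcal{L}_{\mathbb{H}[a]}=\mathcal{L}_{\mathbb{H}}[s(a)]$.

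Next, if $\nu(a)\in\mathcal{G}(\mathbb{H})$ then Proposition~\ref{layering_ext_prop} gives $\mathbb{H}[a]=\mathcal{L}_{\mathbb{H}}[s(a)]\odot\mathcal{G}(\mathbb{H})$, and reading off the sorting semiring of this uniform layered domain yields $\mathcal{L}_{\mathbb{H}[a]}=\mathcal{L}_{\mathbb{H}}[s(a)]$, so the second (hence the first) condition holds.

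For the remaining implication I argue contrapositively: assuming $\nu(a)\notin\mathcal{G}(\mathbb{H})$ I must produce $\mathcal{L}_{\mathbb{H}[a]}\subsetneq\mathcal{L}_{\mathbb{H}}[s(a)]$. As in the preceding proposition I work under $s(a)\notin\mathcal{L}_{\mathbb{H}}$, which is the genuine layer-extension case the statement intends (when $s(a)\in\mathcal{L}_{\mathbb{H}}$ one has $\mathcal{L}_{\mathbb{H}}[s(a)]=\mathcal{L}_{\mathbb{H}}$ and, by Proposition~\ref{nu_ext_prop}, $\mathcal{L}_{\mathbb{H}[a]}=\mathcal{L}_{\mathbb{H}}$ as well, so the statement degenerates there and should be read with this proviso). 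If $\nu(a)$ is $\mathcal{G}(\mathbb{H})$-torsion with $r=Rank_{\mathcal{G}(\mathbb{H})}(\nu(a))>0$, then for $k_{1}>k_{2}+r$ the $\nu$-values $\nu(\alpha)\nu(a)^{k_{1}}$ and $\nu(\beta)\nu(a)^{k_{2}}$ never coincide, so no $f\in\mathbb{H}[x]$ can have $s(f(a))=s(\alpha)s(a)^{k_{1}}+s(\beta)s(a)^{k_{2}}$, while this two-term sum lies in $\mathcal{L}_{\mathbb{H}}[s(a)]$; it therefore witnesses $\mathcal{L}_{\mathbb{H}}[s(a)]\setminus\mathcal{L}_{\mathbb{H}[a]}\neq\emptyset$. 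If $\nu(a)$ is not $\mathcal{G}(\mathbb{H})$-torsion, then since $\mathcal{G}(\mathbb{H})$ is a semifield (Proposition~\ref{L_G_decomp}) the equation $\nu(\alpha)\nu(a)^{k_{1}}=\nu(\beta)\nu(a)^{k_{2}}$ forces $k_{1}=k_{2}$, so the dominant set of every $f\in\mathbb{H}[x]$ is a singleton and $s(f(a))$ is always a single monomial $\ell\cdot s(a)^{k}$; hence $\mathcal{L}_{\mathbb{H}[a]}=\bigcup_{k\geq 0}\mathcal{L}_{\mathbb{H}}\cdot s(a)^{k}$, and one finishes by showing this is a proper subset of $\mathcal{L}_{\mathbb{H}}[s(a)]$.

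The step I expect to be the main obstacle is this last strict containment: one must check that $\bigcup_{k\geq 0}\mathcal{L}_{\mathbb{H}}\cdot s(a)^{k}$ is not already additively closed when $s(a)\notin\mathcal{L}_{\mathbb{H}}$ — equivalently, that $1+s(a)$ and similar sums cannot all be rewritten as monomials in $s(a)$ over $\mathcal{L}_{\mathbb{H}}$. When $s(a)$ is transcendental over $D(\mathcal{L}_{\mathbb{H}})$ this is immediate; in the algebraic case I would pass to the difference ring $D(\mathcal{L}_{\mathbb{D}})$ (legitimate since $\mathcal{L}_{\mathbb{D}}$ is commutative and additively cancellative), turn a putative identity $1+s(a)=\ell\,s(a)^{m}$ into a genuine polynomial relation over $D(\mathcal{L}_{\mathbb{H}})$, and combine it with additive cancellativity and $s(a)\notin\mathcal{L}_{\mathbb{H}}$ to exhibit two monomials whose sum is not a monomial. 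Everything else is routine bookkeeping with the sorting and ghost maps.
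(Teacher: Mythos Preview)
Your approach is essentially the paper's own: the corollary carries no separate proof in the paper and is meant to be read off directly from the proof of the preceding proposition (the containments $\bigcup_{k\geq 0}\mathcal{L}_{\mathbb{H}}\cdot s(a)^{k}\subseteq\mathcal{L}_{\mathbb{H}[a]}\subseteq\mathcal{L}_{\mathbb{H}}[s(a)]$, the torsion/non-torsion dichotomy for $\nu(a)$) together with Proposition~\ref{layering_ext_prop} for the direction $\nu(a)\in\mathcal{G}(\mathbb{H})\Rightarrow\mathcal{L}_{\mathbb{H}[a]}=\mathcal{L}_{\mathbb{H}}[s(a)]$. You reconstruct exactly this, and you are in fact more scrupulous than the paper on two points it leaves implicit: the degenerate case $s(a)\in\mathcal{L}_{\mathbb{H}}$, and the ``main obstacle'' that a genuine two-term sum like $1+s(a)$ is not already a monomial in $\mathcal{L}_{\mathbb{H}}[s(a)]$ --- the paper tacitly assumes this as well.
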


\begin{cor}\label{cor2}
Let $\mathbb{H}$ be a layered semifield and $\mathbb{D}$ a layered domain extending $\mathbb{H}$. Let $a \in \mathbb{D}$ such that $\mathcal{G}(\mathbb{H}[a])$ is a  semifield. Then by Remark \ref{uniform_rem}, $\mathbb{H}[a]$ is uniform, in the sense that all layer fibers coincide. By Corollary \ref{regular_extension_cor}, the minimal uniform layered domain containing $\mathbb{H}[a]$ is $\mathcal{G}(\mathbb{H}[a])~\odot~\mathcal{L}_{\mathbb{H}}[s(a)]$.
\end{cor}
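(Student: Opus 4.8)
The plan is to assemble the claimed equality $\mathscr{U}_{\mathbb{D}}(\mathbb{H}[a]) = \mathcal{G}(\mathbb{H}[a]) \odot \mathcal{L}_{\mathbb{H}}[s(a)]$ from the two ingredients named in the statement, supplying the short gluing argument between them. First I would apply Remark \ref{uniform_rem} to the layered domain $\mathbb{K}=\mathbb{H}[a]$: since $\mathcal{G}(\mathbb{H}[a])$ is assumed to be a semifield, every nonzero $\nu$-value of $\mathbb{H}[a]$ is invertible, so $L_\beta \subseteq L_\alpha$ and $L_\alpha \subseteq L_\beta$ for all $\alpha,\beta \in \mathcal{G}(\mathbb{H}[a])$; hence all layer fibres of $\mathbb{H}[a]$ coincide, say with a common set $L$. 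Since $\mathbb{D}$ is itself of the form $\mathcal{L}_{\mathbb{D}} \odot \mathcal{G}(\mathbb{D})$ (Theorem \ref{thm_uniform_decomposition}), this gives $\mathcal{L}_{\mathbb{H}[a]} = s(\mathbb{H}[a]) = L$ and $\mathbb{H}[a] = \{\, ^{[l]}\alpha : l \in L,\ \alpha \in \mathcal{G}(\mathbb{H}[a])\,\} = L \odot \mathcal{G}(\mathbb{H}[a])$, which is the precise sense in which ``$\mathbb{H}[a]$ is uniform.''

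Next I would set $\mathbb{E}_0 := \mathcal{G}(\mathbb{H}[a]) \odot \mathcal{L}_{\mathbb{H}}[s(a)]$ and verify that $\mathbb{E}_0$ is a uniform layered sub-domain of $\mathbb{D}$ containing $\mathbb{H}[a]$. The inclusion $\mathbb{H}[a]\subseteq\mathbb{E}_0$ is read off \eqref{eq3} and \eqref{eq4}: for $f\in\mathbb{H}[x]$ one has $f(a)=s(f(a))\odot\nu(f(a))$ with $s(f(a))\in\mathcal{L}_{\mathbb{H}}[s(a)]$ and $\nu(f(a))\in\mathcal{G}(\mathbb{H}[a])$. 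For $\mathbb{E}_0\subseteq\mathbb{D}$ one has $\mathcal{G}(\mathbb{H}[a])\subseteq\mathcal{G}(\mathbb{D})$, while $\mathcal{L}_{\mathbb{H}}[s(a)]\subseteq\mathcal{L}_{\mathbb{D}}$ because $\mathcal{L}_{\mathbb{D}}$ is a semiring containing $\mathcal{L}_{\mathbb{H}[a]}$ and, by Corollary \ref{regular_extension_cor}, $\mathcal{L}_{\mathbb{H}}[s(a)]$ is the smallest semiring containing $\mathcal{L}_{\mathbb{H}[a]}$. Finally $\mathbb{E}_0$ is a genuine uniform layered domain in the sense of Definition \ref{defn_uniform_layered_domain}: $\mathcal{G}(\mathbb{H}[a])$ is an ordered domain (indeed a semifield), and $\mathcal{L}_{\mathbb{H}}[s(a)]=\{g(s(a)):g\in\mathcal{L}_{\mathbb{H}}[x]\}$, being a sub-semiring of the sorting semiring $\mathcal{L}_{\mathbb{D}}$, is again a pre-ordered semiring without zero.

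It then remains to establish minimality. Let $\mathbb{E}$ be any uniform layered domain with $\mathbb{H}[a]\subseteq\mathbb{E}\subseteq\mathbb{D}$; by Theorem \ref{thm_uniform_decomposition} write $\mathbb{E}=\mathcal{L}_{\mathbb{E}}\odot\mathcal{G}(\mathbb{E})$ with $\mathcal{L}_{\mathbb{E}}\subseteq\mathcal{L}_{\mathbb{D}}$ and $\mathcal{G}(\mathbb{E})\subseteq\mathcal{G}(\mathbb{D})$. From $\mathbb{H}[a]\subseteq\mathbb{E}$ we get $\mathcal{L}_{\mathbb{H}[a]}=s(\mathbb{H}[a])\subseteq\mathcal{L}_{\mathbb{E}}$ and $\mathcal{G}(\mathbb{H}[a])=\nu(\mathbb{H}[a])\subseteq\mathcal{G}(\mathbb{E})$; since $\mathcal{L}_{\mathbb{E}}$ is a semiring, Corollary \ref{regular_extension_cor} forces $\mathcal{L}_{\mathbb{H}}[s(a)]\subseteq\mathcal{L}_{\mathbb{E}}$, and hence $\mathbb{E}_0 = \mathcal{G}(\mathbb{H}[a])\odot\mathcal{L}_{\mathbb{H}}[s(a)] \subseteq \mathcal{L}_{\mathbb{E}}\odot\mathcal{G}(\mathbb{E}) = \mathbb{E}$. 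Combined with the previous paragraph, which exhibits $\mathbb{E}_0$ as one admissible choice of $\mathbb{E}$, this shows $\mathbb{E}_0$ is the minimum, i.e.\ $\mathscr{U}_{\mathbb{D}}(\mathbb{H}[a]) = \mathcal{G}(\mathbb{H}[a])\odot\mathcal{L}_{\mathbb{H}}[s(a)]$.

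The step I expect to be the main obstacle is the very first one: ensuring that the inversion argument of Remark \ref{uniform_rem} can be run \emph{inside} $\mathbb{H}[a]$ itself (not merely inside its uniform closure) from the sole hypothesis that $\mathcal{G}(\mathbb{H}[a])$ is a semifield. If this turns out to be awkward it can be bypassed, since the construction of $\mathbb{E}_0$ together with the minimality argument already yields the stated formula using only Corollary \ref{regular_extension_cor} to control the sorting semiring, with the semifield hypothesis entering merely to guarantee that $\mathcal{G}(\mathbb{H}[a])$ (hence $\mathbb{E}_0$) is a legitimate value semiring and that the layer fibres of $\mathbb{H}[a]$ already agree.
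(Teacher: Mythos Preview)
Your proposal is correct and follows precisely the route the paper indicates: the corollary in the paper carries no separate proof, the justification being the two citations built into its statement, and you have simply spelled out what Remark \ref{uniform_rem} (equality of layer fibres when $\mathcal{G}(\mathbb{H}[a])$ is a semifield) and Corollary \ref{regular_extension_cor} (identification of $\mathcal{L}_{\mathbb{H}}[s(a)]$ as the minimal sorting semiring containing $\mathcal{L}_{\mathbb{H}[a]}$) contribute, together with the routine minimality check. Your added care in verifying that $\mathbb{E}_0$ really sits inside $\mathbb{D}$ and is minimal among uniform sub-domains is a welcome elaboration of what the paper leaves implicit.
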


\bigbreak

%

The above observations justify the following definition of a uniform extension of a uniform layered semifield $\mathbb{H}$, by an an arbitrary element of a layered domain \linebreak extending $\mathbb{H}$:

\begin{prop}\label{uni_ext_prop}
Let $\mathbb{H}$ be a uniform $\mathcal{L}_{\mathbb{H}}$-layered semifield. Let $\mathbb{D}$ be a uniform \linebreak $\mathcal{L}_{\mathbb{D}}$-layered domain extending $\mathbb{H}$. Then for $a \in \mathbb{D}$ we can define the following uniform extension of $\mathbb{H}$ by $a$:
\begin{align*}
\mathscr{U}_{\mathbb{D}}(\mathbb{H}[a]) = (\mathbb{H}[s(a) \odot 1])[1 \odot \nu(a)] = (\mathbb{H}[1 \odot \nu(a)])[s(a) \odot 1] = \mathbb{H}[s(a)] \cdot \mathbb{H}[\nu(a)]\\
= \{ p(s(a)) \odot \alpha \cdot s \odot \nu(q(\nu(a))) \ : \ p \in \mathcal{L}_{\mathbb{H}}[x], \ q \in \mathbb{H}[x], \ \alpha \in \mathcal{G}(\mathbb{H}), \ s \in \mathcal{L}_{\mathbb{H}} \} \\
= \mathcal{L}_\mathbb{H}[s(a)] \odot \mathcal{G}(\mathbb{H}[\nu(a)]).
\end{align*}

This is the smallest uniform layered domain extending $\mathbb{H}$ and containing $a$.
\end{prop}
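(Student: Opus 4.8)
The plan is to exhibit the candidate object $\mathbb{E} := \mathcal{L}_{\mathbb{H}}[s(a)] \odot \mathcal{G}(\mathbb{H})[\nu(a)]$ explicitly as a uniform layered sub-domain of $\mathbb{D}$, to show it is the $\subseteq$-minimum among uniform layered domains containing $\mathbb{H}[a]$, and then to match $\mathbb{E}$ against the remaining descriptions in the display. First I would check that $\mathbb{E}$ really is a uniform layered sub-domain of $\mathbb{D}$: $\mathcal{G}(\mathbb{D})$ is an ordered domain (in fact bipotent, since the operations it inherits from $\mathbb{D}$ are $\max$ and ordinary multiplication), hence so is its sub-semiring $\mathcal{G}(\mathbb{H})[\nu(a)] = \mathcal{G}(\mathbb{H}[\nu(a)])$ — the last equality because $\nu$ is a semiring epimorphism — while $\mathcal{L}_{\mathbb{H}}[s(a)]$ is a sub-semiring of $\mathcal{L}_{\mathbb{D}}$; so by Definition~\ref{defn_uniform_layered_domain} the set $\mathcal{L}_{\mathbb{H}}[s(a)]\odot\mathcal{G}(\mathbb{H})[\nu(a)]$ is a sub-semiring of $\mathbb{D}$ and a uniform layered domain. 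It contains $\mathbb{H}$ and contains $a = s(a)\odot\nu(a)$, so $\mathbb{H}[a]\subseteq\mathbb{E}$.

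For minimality, let $\mathbb{F}=\mathcal{L}_{\mathbb{F}}\odot\mathcal{G}(\mathbb{F})$ be any uniform layered domain with $\mathbb{H}[a]\subseteq\mathbb{F}\subseteq\mathbb{D}$. From $\mathbb{H}\subseteq\mathbb{F}$ we get $\mathcal{L}_{\mathbb{H}}\subseteq\mathcal{L}_{\mathbb{F}}$ and $\mathcal{G}(\mathbb{H})\subseteq\mathcal{G}(\mathbb{F})$, and from $a\in\mathbb{F}$ we get $s(a)\in\mathcal{L}_{\mathbb{F}}$ and $\nu(a)\in\mathcal{G}(\mathbb{F})$; since $\mathcal{L}_{\mathbb{F}}$ and $\mathcal{G}(\mathbb{F})$ are semirings this forces $\mathcal{L}_{\mathbb{H}}[s(a)]\subseteq\mathcal{L}_{\mathbb{F}}$ and $\mathcal{G}(\mathbb{H})[\nu(a)]\subseteq\mathcal{G}(\mathbb{F})$, whence $\mathbb{E}\subseteq\mathbb{F}$. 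Together with the previous paragraph this shows $\mathbb{E}$ is the minimum uniform layered domain containing $\mathbb{H}[a]$, so $\mathscr{U}_{\mathbb{D}}(\mathbb{H}[a])$ is well-defined and equals $\mathbb{E}$; this is exactly the assertion that $\mathbb{E}$ is the smallest uniform layered domain extending $\mathbb{H}$ and containing $a$, and it already records the last equality $\mathscr{U}_{\mathbb{D}}(\mathbb{H}[a]) = \mathcal{L}_{\mathbb{H}}[s(a)]\odot\mathcal{G}(\mathbb{H}[\nu(a)])$.

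It then remains to identify $\mathbb{E}$ with the iterated and product descriptions. Since $\nu(s(a)\odot 1)=1\in\mathcal{G}(\mathbb{H})$, Proposition~\ref{layering_ext_prop} gives $\mathbb{H}[s(a)\odot 1]=\mathcal{L}_{\mathbb{H}}[s(a)]\odot\mathcal{G}(\mathbb{H})$; adjoining $1\odot\nu(a)$ (whose sorting value is $1$), the inclusion into $\mathbb{E}$ is immediate from \eqref{eq3}--\eqref{eq5}, and conversely every $l\odot\beta\in\mathbb{E}$ has $\beta=\alpha\,\nu(a)^{k}$ with $\alpha\in\mathcal{G}(\mathbb{H})$ by bipotency of $\mathcal{G}$ (Remark~\ref{rem_affine_bipotent}), so $l\odot\beta = (l\odot\alpha)\,(1\odot\nu(a))^{k}$ with $l\odot\alpha\in\mathbb{H}[s(a)\odot 1]$; hence $(\mathbb{H}[s(a)\odot 1])[1\odot\nu(a)]=\mathbb{E}$. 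Symmetrically, since $s(1\odot\nu(a))=1\in\mathcal{L}_{\mathbb{H}}$, Proposition~\ref{nu_ext_prop} gives $\mathbb{H}[1\odot\nu(a)]=\mathcal{L}_{\mathbb{H}}\odot\mathcal{G}(\mathbb{H}[\nu(a)])$, and adjoining $s(a)\odot 1$ (whose $\nu$-value is $1$) and rerunning the computation of Proposition~\ref{layering_ext_prop} — where $l\odot\beta$ with $l=\sum_i s_i s(a)^{i}$ is realized as $g(s(a)\odot 1)$ for $g(x)=\sum_i ({}^{[s_i]}\beta)\,x^{i}\in(\mathbb{H}[1\odot\nu(a)])[x]$ — yields $(\mathbb{H}[1\odot\nu(a)])[s(a)\odot 1]=\mathbb{E}$. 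Finally the products $(p(s(a))\odot\alpha)(s\odot\nu(q(\nu(a)))) = p(s(a))\,s \odot \alpha\,\nu(q(\nu(a)))$, as $p\in\mathcal{L}_{\mathbb{H}}[x]$, $q\in\mathbb{H}[x]$, $\alpha\in\mathcal{G}(\mathbb{H})$, $s\in\mathcal{L}_{\mathbb{H}}$ vary, sweep out exactly $\mathcal{L}_{\mathbb{H}}[s(a)]\odot\mathcal{G}(\mathbb{H})[\nu(a)]=\mathbb{E}$, which is already a semiring; this gives $\mathbb{H}[s(a)]\cdot\mathbb{H}[\nu(a)]=\mathbb{E}$ and the displayed explicit set.

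I expect the main obstacle to be the second step of each iterated construction. After the first pure extension the intermediate object $\mathbb{H}[s(a)\odot 1]$ (resp. $\mathbb{H}[1\odot\nu(a)]$) is in general \emph{not} a layered semifield, because $\mathcal{L}_{\mathbb{H}}[s(a)]$ (resp. $\mathcal{G}(\mathbb{H}[\nu(a)])$) need not be a semifield, so Propositions~\ref{layering_ext_prop} and~\ref{nu_ext_prop} cannot simply be invoked a second time as black boxes. One must re-examine their proofs and verify that the only points where a semifield is used — division of layers in the pure-value construction, inversion of a $\nu$-value in the pure-layer construction — do not arise here: in the pure-value step because bipotency of $\mathcal{G}$ collapses every target $\nu$-value to a single monomial, and in the pure-layer step because the adjoined element $s(a)\odot 1$ has $\nu$-value $1$. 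Everything else reduces to the formal minimality argument above.
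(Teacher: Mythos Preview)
Your proof is correct and essentially follows the paper's intended route. In fact the paper does not provide a separate proof for this proposition: it introduces the statement with ``The above observations justify the following definition of a uniform extension\ldots'', so the argument is meant to be read off directly from Propositions~\ref{layering_ext_prop} and~\ref{nu_ext_prop} together with Corollaries~\ref{regular_extension_cor} and~\ref{cor2}. Your write-up makes this explicit --- the direct minimality argument on the $\mathcal{L}$- and $\mathcal{G}$-components, and the two iterated computations --- and your identification of the one genuine subtlety (that the intermediate object after a single pure extension need not be a semifield, so Propositions~\ref{layering_ext_prop} and~\ref{nu_ext_prop} cannot be reapplied verbatim) is exactly right; your workaround via bipotency of $\mathcal{G}$ and the fact that the second adjoined element has $\nu$-value $1$ is the correct fix.
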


In view of the above definition, we can provide a necessary and sufficient condition
for a uniform extension $\mathbb{H}[a]$ to be a semifield.
\begin{cor} \label{semifield_decomp_cor}
Let $\mathbb{H}$ be a uniform $\mathcal{L}_{\mathbb{H}}$-layered semifield. Let $\mathbb{D}$ be a uniform \linebreak $\mathcal{L}_{\mathbb{D}}$-layered domain extending $\mathbb{H}$ and let  $a \in \mathbb{D}$.
$\mathbb{H}[a]$ is a layered semifield if and only if both $\mathbb{H}[s(a)]$ and $\mathcal{G}(\mathbb{H}[\nu(a)])$
are semifields.
\end{cor}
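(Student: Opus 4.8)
The plan is to reduce the statement to Proposition \ref{L_G_decomp} applied to the explicit $\odot$-decomposition of the uniform extension provided by Proposition \ref{uni_ext_prop}. By that proposition, the uniform extension $\mathbb{H}[a] = \mathscr{U}_{\mathbb{D}}(\mathbb{H}[a])$ equals $\mathcal{L}_\mathbb{H}[s(a)] \odot \mathcal{G}(\mathbb{H}[\nu(a)])$, a uniform layered domain whose sorting semiring is $\mathcal{L}_\mathbb{H}[s(a)]$ and whose value semiring is $\mathcal{G}(\mathbb{H}[\nu(a)])$ (these identifications being exactly the ones built into Definition \ref{defn_uniform_layered_domain}). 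Since a uniform layered domain is in particular a uniform pre-domain, Proposition \ref{L_G_decomp} gives at once that $\mathbb{H}[a]$ is a uniform layered semifield if and only if both $\mathcal{L}_\mathbb{H}[s(a)]$ and $\mathcal{G}(\mathbb{H}[\nu(a)])$ are semifields.

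It then remains only to trade the condition ``$\mathcal{L}_\mathbb{H}[s(a)]$ is a semifield'' for ``$\mathbb{H}[s(a)]$ is a semifield''. I would do this by applying Proposition \ref{layering_ext_prop} to the element $b = s(a) \odot 1 \in \mathbb{D}$: since $\nu(b) = 1 \in \mathcal{G}(\mathbb{H})$ and $s(b) = s(a)$, that proposition yields $\mathbb{H}[s(a)] = \mathbb{H}[b] = \mathcal{L}_\mathbb{H}[s(a)] \odot \mathcal{G}(\mathbb{H})$. Applying Proposition \ref{L_G_decomp} a second time, $\mathbb{H}[s(a)]$ is a semifield if and only if both $\mathcal{L}_\mathbb{H}[s(a)]$ and $\mathcal{G}(\mathbb{H})$ are semifields; but $\mathcal{G}(\mathbb{H})$ is a semifield because $\mathbb{H}$ is a uniform layered semifield (Proposition \ref{L_G_decomp} once more). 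Hence ``$\mathbb{H}[s(a)]$ is a semifield'' is equivalent to ``$\mathcal{L}_\mathbb{H}[s(a)]$ is a semifield'', and chaining this with the equivalence of the previous paragraph completes the argument.

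I do not expect a genuine obstacle: this is essentially a bookkeeping corollary of Propositions \ref{uni_ext_prop}, \ref{layering_ext_prop} and \ref{L_G_decomp}. The only point that warrants a moment of care is to make sure that in each application of Proposition \ref{L_G_decomp} the sorting and value semirings being invoked are literally the coordinates of the corresponding $\odot$-construction, which is immediate from Propositions \ref{uni_ext_prop} and \ref{layering_ext_prop}; no new computation with the operations of $\mathbb{D}$ is needed.
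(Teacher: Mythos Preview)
Your argument is correct and follows exactly the route the paper intends: the paper's proof is the single line ``A straightforward consequence of Proposition~\ref{L_G_decomp}'', and your proposal simply unpacks that sentence by invoking the decomposition of Proposition~\ref{uni_ext_prop} and then translating $\mathcal{L}_\mathbb{H}[s(a)]$ back to $\mathbb{H}[s(a)]$ via Proposition~\ref{layering_ext_prop}. Nothing is missing and no different idea is used.
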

\begin{proof}
A straightforward consequence of Proposition \ref{L_G_decomp}.
\end{proof}

\bigbreak
Let $\mathbb{H}$ be a uniform $\mathcal{L}_{\mathbb{H}}$-layered semifield. Let $\mathbb{D}$ be a uniform $\mathcal{L}_{\mathbb{D}}$-layered domain extending $\mathbb{H}$ and let $a \in \mathbb{D}$. By
Proposition \ref{uni_ext_prop}, extending $\mathbb{H}$ uniformly by $a$ involves two successive uniform extensions. One extension is an extension of the $\nu$-values semifield $\mathcal{G}(\mathbb{H})$, leaving the sorting semifield unchanged, while the other extension is an extension of the sorting semifield $\mathcal{L}_{\mathbb{H}}$, leaving the extended $\nu$-values semifield unchanged. Note that since the uniform extension is independent of the order these last two extensions are being applied, we may assume w.l.g that the $\nu$-values semifield, $\mathcal{G}(\mathbb{H})$, is extended first.\\

Write $a = s(a) \odot \nu(a)$ with $s(a) \in \mathcal{L}_{\mathbb{D}}$ and $\nu(a) \in \mathcal{G}(\mathbb{D})$. The first extension induced by $a$ is $\mathbb{H}[1 \odot \nu(a)] = \mathcal{G}(\mathbb{H}[\nu(a)])$. Now, $(\mathcal{G}(\mathbb{H}), \max, \oplus)$ is a bipotent semifield and so, we require its extension $\mathcal{G}(\mathbb{H}[\nu(a)])$ to be such too, thus this `$\nu$-values' part of the extension is just a bipotent extension. The second extension induced by $a$ is $\mathbb{H}[s(a) \odot 1] = \mathcal{L}_\mathbb{H}[s(a)]$ of $\mathcal{L}_{\mathbb{H}}$. This extension can be viewed as taking place  at each $\nu$-value in $\mathcal{G}(\mathbb{H})$. $\mathcal{L}_{\mathbb{H}}$ is a cancellative semifield and so we require its extension $\mathcal{L}_\mathbb{H}[s(a)]$ to be such too, thus this `sorting` part of the extension is a cancellative extension.

By Corollary \ref{semifield_decomp_cor} and the last paragraph, the question of when $\mathbb{H}[a]$ is a semifield reduces to the following questions: for an element $a \in \mathbb{D}$
\begin{enumerate}
  \item For which values of $s(a)$ is $\mathbb{H}[s(a)]$ a cancellative semifield extension?
  \item For which values of $\nu(a)$ is $\mathcal{G}(\mathbb{H}[\nu(a)])$ a bipotent semifield extension?
\end{enumerate}
Both of these questions have been  answered fully in the preceding sections where we characterize both bipotent (cf. Theorem \ref{mainthm}) and cancellative (cf. Corollary \ref{maincor}) simple extensions.

\bibliographystyle{amsplain}
\bibliography{TAGBib}

\end{document}